\newcommand\sC{{\mathcal C}}
\newcommand\sE{{\mathcal E}}
\newcommand\sA{{\mathcal A}}
\newcommand\sG{{\mathcal G}}
\newcommand\sU{{\mathcal U}}
\newcommand\sB{{\mathcal B}}
\newcommand\sN{{\mathcal N}}
\newcommand\sK{{\mathcal K}}
\newcommand\sH{{\mathcal H}}
\newcommand\sM{{\mathcal M}}
\newcommand\om{\omega}
\newcommand\la{\lambda}
\newcommand\Lam{\Lambda}
\newcommand\al{\alpha}
\newcommand\be{\beta}
\newcommand\e{\epsilon}
\newcommand\s{\sigma}
\newcommand\Ga{\Gamma}
\newcommand\ga{\gamma}
\newcommand{\CC}{\ensuremath{\mathbb{C}}}
\newcommand{\RR}{\ensuremath{\mathbb{R}}}
\newcommand{\ZZ}{\ensuremath{\mathbb{Z}}}
\newcommand{\sS}{\ensuremath{\mathcal{S}}}
\newcommand{\hol}{\ensuremath{\mathcal{O}}}
\newcommand{\PP}{\ensuremath{\mathbb{P}}}
\newcommand{\FF}{\ensuremath{\mathbb{F}}}
\newcommand{\ra}{\ensuremath{\rightarrow}}
\def\eea{\end{eqnarray*}}
\def\bea{\begin{eqnarray*}}
\newcommand\dual{\mathrel{\raise3pt\hbox{$\underline{\mathrm{\thinspace d
\thinspace}}$}}}
\newcommand\qe{\ifhmode\unskip\nobreak\fi\quad $\Box$}       
\def\BOX{\hfill\lower.5\baselineskip\hbox{$\Box$}}
\newtheorem{theorem}{Theorem}
\newtheorem{theo}[theorem]{Theorem}
\newtheorem{remark}[theorem]{Remark}
\newenvironment{rem}{\begin{remark}\rm}{\end{remark}}
\newtheorem{question}[theorem]{Question}
\newtheorem{prop}[theorem]{Proposition}
\newtheorem{cor}[theorem]{Corollary}
\newtheorem{lemma}[theorem]{Lemma}
\newtheorem{example}[theorem]{Example}
\newenvironment{ex}{\begin{example}\rm}{\end{example}}
\theoremstyle{definition}
\newtheorem{defin}[theorem]{Definition}
\newenvironment{dedication}
        {\begin{quotation}\begin{center}\begin{em}}
        {\par\end{em}\end{center}\end{quotation}}
\def\tagform@#1{\maketag@@@{\ignorespaces#1\unskip\@@italiccorr}}
\newcolumntype{H}{@{}>{\lrbox0}l<{\endlrbox}} 
\begin{document}

\title[Kummer surfaces, selfduality]{Kummer quartic  surfaces, strict self-duality, and  more}
\author{Fabrizio Catanese}
\address{Lehrstuhl Mathematik VIII, 
 Mathematisches Institut der Universit\"{a}t
Bayreuth, NW II\\ Universit\"{a}tsstr. 30,
95447 Bayreuth, Germany \\ and Korea Institute for Advanced Study, Hoegiro 87, Seoul, 
133--722.}
\email{Fabrizio.Catanese@uni-bayreuth.de}

\thanks{AMS Classification: 14J28, 14K25, 14E07, 14J10, 14J25, 14J50, 32J25.\\ 
The author acknowledges support of the ERC 2013 Advanced Research Grant - 340258 - TADMICAMT}

\maketitle

\begin{dedication}
Dedicated to Ciro, the `Prince of Stromboli',  on the occasion of his 70-th  
 birthday.
\end{dedication}

\begin{abstract}
In this paper we first show that each Kummer quartic surface (a quartic surface $X$ with 16 singular points) is, in canonical coordinates,
 equal to its dual surface, and that the Gauss map induces a fixpoint free involution $\ga$ on the minimal resolution
 $S$ of $X$. Then we study the corresponding Enriques surfaces $S/ \ga$. 
 
 We also describe in detail the remarkable
 properties of the  most symmetric Kummer quartic,
 which we call the Cefal\'u quartic. We also investigate the Kummer quartic surfaces
 whose associated Abelian surface is isogenous to a product of elliptic curves  through an isogeny  with kernel $(\ZZ/2)^2$, and show the existence of
 polarized nodal K3 surfaces $X$ of any degree $d=2k$ with the maximal number of nodes, such that  $X$
 and its nodes are defined over $\RR$.
 
 We take then as  parameter space  for Kummer quartics an open set in $\PP^3$,
 parametrizing nondegenerate $(16_6, 16_6)$-configurations, and compare with other parameter spaces. 
 
 We also extend to positive characteristic some results which were previously known over $\CC$.
 
 We end   with a section devoted to   remarks on normal cubic surfaces, and providing  some other  examples of strictly selfdual hypersurfaces.

\end{abstract}

\tableofcontents

\setcounter{section}{0}

\section{Introduction} 

This paper originated from some simple examples I gave in  a course held in Bayreuth in summer 2018, and in  a series of Lectures held in November 2018 in Udine, on the topic of  surfaces in $\PP^3$ and their singularities.

Given an irreducible surface $X$ of degree $d$ in $\PP^3$ which is normal, that is, with only   finitely many singular points, one
can give an explicit upper bound $\mu' (d)$ for the number of its singular points (for instance, over $\CC$, one can use the birational map 
of $X$ to its dual surface $X^{\vee}$ to obtain a crude estimate). 

 Letting $\mu(d)$ be the maximal number of singular points of a normal surface of degree $d$, the case of $d=1,2$ being trivial ($\mu (1)= 0, \mu (2)=1$), the first interesting cases are for $d= 3,4$: $\mu (3)= 4, \mu (4)=16$  if char(K) $ \neq 2$ \footnote{ In
 $ char(K) = 2$, using also a remark by the referee on  the discriminant   of supersingular K3 surfaces we can prove that    $\mu(4) \leq  20$.}.

For $d=3$ (see  propositions \ref{nodalcubic} and \ref{monoid}), a normal cubic surface $X$ can have at most 4 singular points, no three of them can be collinear, 
and if it does have 4 singular points, these are linearly independent, hence $X$  is projectively equivalent to the 
so-called Cayley cubic, first apparently found by Schl\"afli, see \cite{schlaefli}, \cite{cremona},  \cite{cayley}.

The Cayley cubic has the simple equation

$$X : = \{ x : = (x_0, x_1, x_2, x_3) | \s_3(x ): =  \sum_i \frac{1}{x_i} x_0 x_1 x_2 x_3 = 0\} $$ 
Here $\s_3$ is the third elementary symmetric function (the four singular points are the 4 coordinate points).

A normal  quartic surface can have,   if char(K) $ \neq 2$, at most 16 singular points: over $\CC$, one can use in general the birational map 
of $X$ to its dual surface $X^{\vee}$ to obtain  the crude estimate that
the dual surface $X^{\vee}$ of a normal surface $X$ with $\nu$ singular points has degree $ \leq d (d-1)^2 - 2 \nu$. 
 By    biduality  the  degree of  $X^{\vee}$  is at least $3$ if $ d \geq 3$.

Hence, for $d=3$, we get $\nu \leq 4$, as  already mentioned, while for $d=4$ $\nu \leq 16$,
equality holding if and only if $X^{\vee}$ is also a quartic surface.

 More generally,  if char(K) $ \neq 2$, and $X$ is a normal quartic surface, by proposition \ref{monoid} it has at most $7$ singular points if it has a triple point,
else  it suffices to project from a double  point of the quartic  to the plane, and to use the bound for the number of singular points for a plane curve of degree $6$, which equals $15$,  to establish $\nu \leq 16$.

Quartics with $16$ singular points  (char(K) $ \neq 2$) have necessarily nodes as singularities, and they are the so called 
 Kummer surfaces \cite{kummer} (the first examples were found by Fresnel, 1822).

There is a long history of research on Kummer quartic surfaces, for instance it is  
 well known  that if $d=4, \nu = 16$, then $X$ is the quotient of a
principally polarized  Abelian surface
$A$ by the group $\{\pm 1\}$, embedded (for $K = \CC$) by the theta functions of second order on $A$.

The simplest example of such a surface is the following 

{\bf Main example:}
 The Duke of Cefal\'u quartic surface is the 16-nodal quartic $X$ with 
  equation $$ s_1(z_i^2)^2 - 3   s_2(z_i^2) : = (\sum_1^4 z_i^2)^2 - 3 \sum_1^4 z_i^4 = 0 \  \Leftrightarrow \
  \s_2( z_i^2) - s_2(z_i^2) = 0.$$
  
Here $\s_i$ is the i-th elementary symmetric function, while $s_i$ is the i-th Newton function (sum of i-th powers,
in particular $s_2(z_i^2) = s_4 (z_i)$).

The Duke of Cefal\'u K3 surface shall denote  the minimal resolution $S$ of $X$, and  
for  simplicity  we shall also call $X$   the Cefal\'u  quartic.  Here are some remarkable properties of
this surface.

\begin{theo}\label{Cefalu'}
The Cefal\'u  quartic surface $$ X : = \{  \s_2( z_i^2) - s_2(z_i^2) =0\}= \{ (\sum_1^4 z_i^2)^2 - 3 \sum_1^4 z_i^4=0\} \subset \PP^3_K$$ ($K$ an algebraically closed field of characteristic $\neq 2,3$) enjoys the following properties:
\begin{enumerate}
\item
The group $\sG$ of projectivities which leave $X$ invariant contains the semidirect product
$$ G : =  (\ZZ / 2)^3  \rtimes \mathfrak S_4 ,$$ central quotient of the canonical  semidirect product $G'$
$$ G : =  G' / Z(G') , \ G'  : = (\ZZ / 2)^4  \rtimes \mathfrak S_4,$$
and $\sG = G$  if   the  characteristic of $K$ is $\neq 5$, else $|\sG| = 10 |G| = 1920$.
\item
The singular points of $X$ are $16$, and they are  the $G$-orbit $\sN$ of the point $(1,1,1,0)$, and 
also its $\sK$- orbit, where $\sK \cong (\ZZ/2)^4$ is the subgroup  generated by  double transpositions
and by diagonal matrices with  entries $\pm 1$ and determinant $1$.
\item
$X$ is strictly self-dual in the strong sense that $X^{\vee} = X$.
\item
Via the standard scalar product the  set $\sN$ yields a set $\sN'$ of $16$ planes, such that $(\sN, \sN')$
form  a $(16_6, 16_6)$ configuration. 
\item
The Gauss map gives an automorphism $\ga$ of  the  minimal resolution $S : = \tilde{X}$ of $X$, which has order $2$ and
which centralizes $G$.
\item
The  group $ G \times \ZZ/ 2$ generated by $G$ and $\ga$ contains a subgroup $G^s $ of index $2$ and order $192$,  $ G^s  \cong  (\ZZ / 2)^3  \rtimes \mathfrak S_4 ,$
of automorphisms of $S$ acting symplectically (acting trivially on the holomorphic two form of $S$).
 The full group of symplectic automorphisms of $S$ has order $384$ and is a semidirect product 
$(\ZZ / 2)^4  \rtimes \mathfrak S_4 .$
\item
$\ga$ acts without fixed points, hence $S / \ga$ is an Enriques surface, which contains  a configuration of $16$ $(-2)$-curves,
which we describe via the  called Kummer-Enriques graph, and
containing several sets of cardinality $4$ of disjoint such curves, but none of cardinality $5$
(in particular  $S / \ga$ is the minimal resolution of several  $4$-nodal Enriques surfaces).
\item
The group of automorphisms of $S$ is larger than $G \times \ZZ/ 2$  and indeed infinite.
\item
The surface $X$ is the Kummer surface of an Abelian surface isogenous to the product of two Fermat 
(equianharmonic) elliptic curves.
\item
The Cefal\'u  quartic surface $$ X : = \{  \s_2( z_i^2) - s_2(z_i^2) =0\} \subset \PP^3_\ZZ$$
defines a Kummer surface over $\sS : = Spec (\ZZ(\frac{1}{2}, \frac{1}{3}))$.
\end{enumerate}
\end{theo}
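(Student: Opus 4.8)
Write $F=\sigma_2(z_i^2)-s_2(z_i^2)=(\sum_{i=1}^4 z_i^2)^2-3\sum_{i=1}^4 z_i^4$ and $X=\{F=0\}\subset\PP^3_K$, $\mathrm{char}\,K\ne2,3$. The plan is to read the combinatorial statements off the explicit equation, and then feed $X$ into the general theory of Kummer quartics developed earlier in the paper for the structural assertions.

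\emph{Parts (1), (2), (4).} One first checks $F$ is irreducible and $X$ normal: a splitting of the quartic into two quadrics, or into a square, would force a curve of singularities, whereas the sixteen points $(\pm1,\pm1,\pm1,0)$ and the permutations of the zero entry are seen — by evaluating $F$ and $\nabla F=\bigl(4z_k(\sum_i z_i^2-3z_k^2)\bigr)_k$ — to lie on $X$ as isolated points with vanishing gradient, hence as nodes; by the bound for normal quartics recalled in the Introduction they are all of $\Sing X=\sN$. The group $G=(\ZZ/2)^3\rtimes\mathfrak S_4$ visibly preserves $F$, and one identifies it with the asserted central quotient of $G'=(\ZZ/2)^4\rtimes\mathfrak S_4$; the same bookkeeping shows $\sN$ is a single orbit, indeed a torsor, under the order-$16$ subgroup $\sK=V_4\times D$, where $D$ is the group of diagonal $\pm1$ matrices of determinant $1$, the two Klein factors commuting because $V_4=\ker(\mathfrak S_4\to GL_2(\FF_2))$. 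For $\sG\subseteq G$ when $\mathrm{char}\,K\ne5$ one uses that $\sG$ acts faithfully on $\sN$ (which spans $\PP^3$) preserving the incidence of part (4): via the standard scalar product a node $n$ determines the plane $n^\perp$, and a short count gives that each node lies on exactly six of the sixteen planes and conversely, i.e. a nondegenerate $(16_6,16_6)$-configuration, so $\sG$ embeds into that configuration's automorphism group, which one computes to be $G$ — except in characteristic $5$, where an extra linear symmetry of order $5$ survives and $|\sG|=10|G|=1920$.

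\emph{Parts (3), (5).} The Gauss map is $x\mapsto[\nabla F(x)]$; to get $X^\vee=X$ one either invokes the general strict self-duality of Kummer quartics in canonical coordinates, or checks directly that for $z\in X$ the point $w_k=z_k(\sum_i z_i^2-3z_k^2)$ again satisfies $F(w)=0$ — a polynomial identity modulo $F$ which reduces, via Newton's identities among the power sums of $z_1^2,\dots,z_4^2$, to an elementary computation. Since $X$ has only nodes, the Gauss map is a morphism off $\Sing X$, and as each node is ordinary (so $\nabla F$ vanishes there to order exactly one) it extends to a morphism $S\to X^\vee=X$; since $S$ is the minimal resolution this lifts to an automorphism $\gamma\colon S\to S$, and biduality gives $\gamma^2=\id$. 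As $G$ acts through signed permutation matrices, which are orthogonal, $\gamma$ commutes with $G$; and $\gamma\ne\id$ for degree reasons.

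\emph{Parts (6), (7).} On the line $H^0(S,\Omega_S^2)$, described by the Poincaré residue of $\Omega/F$, a projectivity $g$ with $F\circ g^{-1}=\lambda F$ acts by $\det(g)/\lambda$; for $g\in G$ one has $\lambda=1$, so the symplectic elements of $G$ are the index-two subgroup $\{\det g=1\}$. For fixed-point-freeness of $\gamma$: on the smooth locus $\gamma(x)=x$ forces $\nabla F(x)\parallel x$, hence $\sum z_i^2=0$ by Euler, and then $\nabla F(x)=-12(z_k^3)_k\parallel x$ forces $m\alpha=0$ for the common value $\alpha\ne0$ of the squares of the $m$ ($1\le m\le4$) nonzero coordinates — impossible for $\mathrm{char}\ne2,3$; and over a node $n=(1,1,1,0)$ the plane $n^\perp=\{z_1+z_2+z_3=0\}$ cuts $X$ in the doubled conic $2\{z_1^2+z_1z_2+z_2^2=z_4^2\}$, which by $\langle n,n\rangle=3\ne0$ does not pass through $n$, so (by the general theory, $\gamma$ sending the exceptional curve of $n$ to the trope conic in $n^\perp$) $\gamma$ has no fixed point on that exceptional curve either; hence $\gamma$ is fixed-point-free, $S/\gamma$ is an Enriques surface, and in particular $\gamma$ is anti-symplectic. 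It follows that $G^s:=\{g\in G:\det g=1\}\cup\{g\gamma:\det g=-1\}$ is a subgroup of $G\times\langle\gamma\rangle$ of order $192$ acting symplectically, and the map $G^s\to\mathfrak S_4$ splits via $\pi\mapsto[\mathbf1,\pi]$ for $\pi$ even and $\pi\mapsto[\mathbf1,\pi]\gamma$ for $\pi$ odd, so $G^s\cong(\ZZ/2)^3\rtimes\mathfrak S_4$. The full symplectic group I would obtain by adjoining the further symplectic automorphisms coming from the product structure of part (9) to build a symplectic action of $(\ZZ/2)^4\rtimes\mathfrak S_4=F_{384}$, then conclude it is everything by Mukai's classification, in which $F_{384}$ is maximal. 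Finally the Enriques surface carries the images of the $32$ configuration curves as its sixteen $(-2)$-curves; reading the incidences off the $(16_6,16_6)$-configuration gives the Kummer--Enriques graph, whose independence number one checks combinatorially to be $4$.

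\emph{Parts (8), (9), (10), and the main obstacle.} Being a $16$-nodal quartic, $X$ is the Kummer surface of a principally polarized abelian surface $A$; I would determine $A$ up to isogeny from the transcendental lattice of the singular K3 surface $S$, which the $F_{384}$-action and the N\'eron--Severi lattice pin down to the rank-$2$ CM lattice attached to $\QQ(\sqrt{-3})$, forcing $A\sim E_\omega\times E_\omega$ with $E_\omega$ equianharmonic — or, more hands-on, one writes the standard Kummer model of $E_\omega\times E_\omega$ and matches it, after a linear change of coordinates, with $\{F=0\}$. Since $S$ is a singular K3 surface of Kummer type, $\Aut S$ is infinite: either cite Keum--Kondo on automorphisms of Kummer surfaces of products of elliptic curves, or exhibit an infinite-order element as a product of two of the involutions already visible on $S$. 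Part (10) is then formal: the equation, the sixteen integral nodes, their simultaneous resolution and the Kummer theory all descend once $2$ and $3$ are invertible, so $\{F=0\}\subset\PP^3_{\ZZ[1/2,1/3]}$ is a Kummer surface over $\mathrm{Spec}\,\ZZ[1/2,1/3]$. The real difficulty, I expect, is twofold: pinning down $\sG$ exactly, with its characteristic-$5$ jump, and identifying both the full symplectic group ($=F_{384}$) and the abelian surface ($A\sim E_\omega^2$) — the latter two being essentially the same lattice-theoretic problem, for which the cleanest route is to make the isogeny $E_\omega\times E_\omega\to X$ fully explicit and derive everything from it.
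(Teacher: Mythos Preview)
Your overall architecture is sound and several of your arguments (notably the Euler-identity fixed-point analysis in (vii), and the $\det g$ description of $G^s$ in (vi)) are valid alternatives, in places cleaner than the paper's case-by-case. But there are two genuine gaps.

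\textbf{Part (i).} Your reduction ``$\sG$ embeds into the configuration's automorphism group, which one computes to be $G$'' is the whole problem, not a solution to it. If you mean the \emph{abstract} combinatorial automorphism group of the $(16_6,16_6)$-configuration, that group has order $16\cdot 6!=11520$ (it is $(\ZZ/2)^4\rtimes\mathfrak S_6$, cf.\ Gonzalez-Dorrego), so the bound is far too weak. If you mean the group of projectivities preserving $\sN$, that is $\sG$ itself, and the sentence is circular. The paper's actual computation is: project $X$ from a node $P_1=[1,1,1,0]$; the branch locus in $\PP^2$ is six lines tangent to the conic $\sC=\{\phi=0\}$; an element of $\sG$ fixing $P_1$ and one trope through it becomes an affine automorphism of the five remaining tangency points on $\sC\cong\PP^1$, which after a further projection are $\{-3,-1,0,1,3\}$. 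In characteristic $\neq 5$ the barycentre is fixed and only $\pm 1$ works; in characteristic $5$ this set is $\FF_5$ and the whole $\mathrm{Aff}(1,\FF_5)$ of order $20$ acts. This is exactly why $|\sG|$ jumps by a factor of $10$; your ``an extra linear symmetry of order $5$ survives'' is an observation, not an argument.

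\textbf{Parts (viii) and (ix).} For (viii), ``exhibit an infinite-order element as a product of two involutions'' is the right idea but you do not say which involutions or why the product has infinite order. The paper takes the projection-from-a-node involution $\iota$ (with $\iota(H)=3H-4E_1$, $\iota(E_1)=2H-3E_1$, $\iota(E_j)=E_j$) and composes with a $g\in G$ swapping $P_1,P_2$; the action of $g\circ\iota$ on $\langle H,E_1,E_2\rangle$ is a single $3\times 3$ Jordan block with eigenvalue $1$, hence infinite order. This same computation shows $\iota\notin\langle G,\gamma\rangle$, which is the first assertion of (viii). For (ix), your transcendental-lattice route is $\CC$-only, whereas the statement is over any $K$ with $\mathrm{char}\,K\neq 2,3$. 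The paper's argument is characteristic-free and elementary: $X$ is a Segre-type Kummer quartic, i.e.\ a $(\ZZ/2)^3$-cover of a smooth quadric $Q\cong\PP^1\times\PP^1$ branched over four horizontal and four vertical lines (Prop.~\ref{special}); the $\mathfrak S_4$-symmetry of $X$ forces a transitive action of a group of order $12$ on each quadruple of branch points, so each cross-ratio is equianharmonic and both elliptic factors are Fermat curves.
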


Do some   of these properties carry over to all Kummer quartic surfaces?
Yes, (ii), (iii), (iv), (v), (vii), (viii) do,  as we shall see in this paper; the other properties  are very special of this surface or extend only in part or for special Kummer quartics.

There are several comments to be made.
\begin{itemize}
\item
Concerning (i), we get a group of symplectic automorphisms of order $192$, which is an index two  subgroup of
the group of symplectic automorphisms of the Fermat quartic surface (see Mukai's paper \cite{mukai}, item 5) on page 184,
and line 8 on page 191). \footnote{ The vague question here is whether    the Cefal\'u quartic and the Fermat quartic 
are somehow related,  `mirror' to each other?  
Motivation comes from  the pencil containing them both  and consisting precisely of the quartics having  $G$-symmetry:
$$ \la_1 s_1(z_i^2)^2  + \la_2   s_2(z_i^2) =0  \Leftrightarrow \
 \mu_1 \s_2( z_i^2) - \mu_2 s_2(z_i^2) = 0.$$ 
 The Cefal\'u quartic is the only Kummer quartic in the pencil, which has five singular elements, for $\la_1=1, \la_2= 0,-1, -2,-3, -4$.}
 We show in proposition \ref{fermat} that indeed the full group of symplectic automorphisms of the Cefal\'u K3 surface $S$ is equal to the group of symplectic automorphisms of the Fermat quartic surface.
\item
 Concerning (vii), the existence of (other)  fixed point free involutions on Kummer surfaces (thereby leading to new constructions of Enriques surfaces) was first shown by Hutchinson, who used Cremona involutions based at the so-called G\"opel tetrads.
\cite{hutchinson}.
\item
Concerning (viii), for our assertion, leading to question 18, we use 
the Klein involution $\iota$ induced from projection from one node. 
  Our argument allows us to prove
Theorem \ref{infinite} asserting that for each Kummer quartic surface the group of automorphisms of $S$
is infinite, in characteristic $\neq 2$. Our result confirms the unpublished  result of Jong Hae Keum (over $\CC$), cited in  \cite{keum-kondo},  and based on the combination of the results of \cite{kondothesis} and \cite{keum1}.

Over $\CC$, since for the Cefal\'u surface $S$ $Pic(S) $ has rank $20$, 
the fact that $Aut(S)$ is infinite follows from the old result of Shioda and Inose \cite{shioda-inose}. 
\item
Also 
 Hutchinson asserts that the group $Aut(S)$ of a Kummer surface
 is infinite (at least for general choice of the Kummer surface, compare the argument given on page 214 of 
\cite{hutchinson}). Hutchinson  uses the   involutions mentioned above, but we show that these do not exist as claimed by Hutchinson
on the Cefal\'u surface, see remark \ref{Tetrahedron};  more generally they do not exist for Segre-type Kummer surfaces, because,  if some Thetanull vanishes, see remark \ref{thetanull}, then the G\"opel tetrads become linearly dependent.  
\item
Concerning (iii),   we observe that   in all articles and textbooks
 one finds the weaker statement (inspired by the great article by Klein \cite{klein} treating quadratic line complexes) that a Kummer quartic is projectively equivalent to its dual surface (see for instance \cite{g-h}, page 784,
  \cite{dorrego} cor. 4.27, page 95, \cite{dolgachev}  theorem 10.3.19, and remark 10.3.20). To clarify the issue, we give the following 
\end{itemize}
\begin{defin}
A projective variety $X \subset \PP^n$ is strictly self-dual if there are coordinates such that $X^{\vee} = X$.

$X$  is said to be weakly self-dual if $X$ is projectively equivalent to $X^{\vee}$.
\end{defin}

To focus on the difference between the two notions, it suffices to observe that  $X$ is strictly self-dual
if and only if there is a projectivity sending $X$ to $X^{\vee}$ whose matrix $A$ is of the form $^{t }B B$,
equivalently, such $A$ is symmetric.

Up to now the main series of examples of self-dual varieties (\cite{cat-slava} , \cite{popov}, \cite{popov-tevelev}) yielded strongly self-dual varieties.

The question whether there are weakly self-dual varieties which are not strongly self-dual remains (as far as we know)  open, since
for Kummer quartics we prove the analogue of (iii) and (vii) above:

\begin{theo}\label{gauss}
(1) All Kummer quartics $X \subset \PP^3_K$, $K$ an algebraically closed field of characteristic $\neq2,3$  are strongly self-dual
\footnote{probably a similar computation shows the result also in $char=3$}, and 

(2) the Gauss map produces  an automorphism $\ga$, of order $2$, of the minimal resolution $S : = \tilde{X}$ of $X$,
which acts without fixpoints, so that  $S / \ga$ is an Enriques surface.

\end{theo}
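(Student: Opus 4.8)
The plan is to reduce everything to a single normal form. A Kummer quartic $X$ is the image of $A/\{\pm1\}$, where $A$ is a principally polarized abelian surface, under the map given by the theta functions of second order; concretely one can bring $X$ into one of the classical Kummer normal forms — e.g. the family of quartics with the $(\ZZ/2)^4$-Heisenberg symmetry, whose general member is of ``Rosenhain type'' and depends on a few moduli — so that the $16$ nodes are a distinguished orbit and the equation is a prescribed quadratic form in the squares $z_i^2$ (for the Cefal\'u member this is exactly $\sigma_2(z_i^2)-s_2(z_i^2)=0$). For part (1), in such coordinates I would compute the dual variety directly: parametrize $X$ near a smooth point, write the Gauss map $p\mapsto [\partial_0 F : \cdots : \partial_3 F](p)$, and verify that the image satisfies the \emph{same} quartic equation in the dual coordinates, i.e. that $F(\nabla F(p)) \equiv 0$ modulo $F(p)$ up to a scalar. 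Since the Heisenberg normal form has only finitely many parameters, this is a finite (if tedious) polynomial identity in those parameters; the point is that the matrix realizing $X^\vee=X$ comes out \emph{symmetric} (indeed, in the right coordinates, a scalar multiple of the identity, mirroring what happens for the Cefal\'u quartic in Theorem~\ref{Cefalu'}(iii)), which is precisely the condition in the Definition that upgrades ``weakly self-dual'' to ``strongly self-dual.'' Biduality $X^{\vee\vee}=X$ (valid in char $\neq 2$) guarantees $X^\vee$ is again a quartic with $16$ nodes, so the Gauss map $X\dashrightarrow X^\vee=X$ is a \emph{birational} self-map of the Kummer surface.

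For part (2), lift the Gauss map to the minimal resolution $S=\tilde X$. The Gauss map $\gamma\colon X\dashrightarrow X^\vee$ is classically the composition: resolve, and $\gamma$ becomes the morphism $S\to S$ that on the abelian surface side corresponds to the involution $x\mapsto x+\epsilon$ by a suitable $2$-torsion translation $\epsilon$ followed (or preceded) by $x\mapsto -x$; equivalently, $\gamma$ is the ``switch'' exchanging the $16$ nodes $\sN$ with the $16$ tropes (the planes $\sN'$ of the $(16_6,16_6)$ configuration from part~(iv)), hence it is a genuine automorphism of $S$ of order $2$ — order exactly $2$ because on $H^0(S,\Omega^2_S)$ it acts by a scalar which is a root of unity, and the $(16_6,16_6)$-switch is visibly an involution on the configuration. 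To see that $\gamma$ is \emph{fixed-point free}: a fixed point would force a node to lie on its own trope, i.e. the configuration would degenerate, contradicting the nondegeneracy of the $(16_6,16_6)$-configuration; on the abelian surface side, the composite $x\mapsto -x+\epsilon$ with $\epsilon$ a nonzero $2$-torsion point has no fixed point among the $16$ two-torsion points (those are the points over the nodes) and, being a free involution upstairs on $A$ itself since $\epsilon\neq 0$, descends to a free involution on $S$. Then $S/\gamma$ has $\chi(\mathcal O)=1$, trivial canonical $K$ with $2K\sim 0$ but $K\not\sim 0$ (a $\ZZ/2$-quotient of a K3 with $\omega_S$ acted on nontrivially, as must be the case for the quotient to not be K3), and $b_2=10$, i.e. it is an Enriques surface; the $16$ $(-2)$-curves on $S$ lying over the nodes map $2$-to-$1$ onto $16$ $(-2)$-curves downstairs by the node/trope pairing.

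The main obstacle, I expect, is \emph{part (1) in a form uniform over the whole family}: producing one normal form for \emph{all} Kummer quartics (not just the generic Rosenhain member or the special Cefal\'u member) in which the self-duality identity and the symmetry of the intertwining matrix can be checked, and controlling the degenerate strata (where some Thetanull vanishes — cf. the G\"opel-tetrad degeneration remarked on for Segre-type surfaces) where the naive parametrization breaks down. One clean way around this is to use the paper's own chosen parameter space: represent a Kummer quartic by its nondegenerate $(16_6,16_6)$-configuration $(\sN,\sN')$ living in an open set of $\PP^3$, observe that the standard scalar product identifies points with planes (as in Theorem~\ref{Cefalu'}(iv)), and show that \emph{this} identification carries $\sN$ to $\sN'$ and conjugates $X$ to $X^\vee$ — the symmetry of the scalar product then delivers strict self-duality for free, and the fixed-point-freeness of $\gamma$ becomes the purely combinatorial statement that no flag of the configuration is self-incident, which is exactly nondegeneracy. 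Once strict self-duality is in hand, everything about $\gamma$ (automorphy of the lift to $S$, order $2$, no fixed points, Enriques quotient) follows as in the Cefal\'u case, now run in families.
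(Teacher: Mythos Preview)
Your approach to part~(1) via the $(16_6,16_6)$ configuration is essentially the paper's algebraic argument: in the canonical coordinates of Theorem~\ref{166} the configuration $(\sN,\sN')$ is already strictly self-dual under the standard scalar product, the nodes of $X^\vee$ are again $\sN$, and a Kummer quartic is determined by its $16$ nodes, so $X^\vee=X$. That part is fine.

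Part~(2), however, has a genuine gap. Your claim that $\gamma$ lifts to $A$ as a map of the form $x\mapsto -x+\epsilon$ for some nonzero $2$-torsion $\epsilon$ is false, and the whole fixed-point argument rests on it. First, any such map descends to $K(A)=A/\{\pm1\}$ as the \emph{translation} $[x]\mapsto [x+\epsilon]$ (since $-1$ is the identity on $K(A)$), and translations by $2$-torsion permute the $16$ nodes among themselves; they do \emph{not} exchange nodes with tropes, which is what $\gamma$ does. Second, $x\mapsto -x+\epsilon$ is not free on $A$: it fixes the sixteen points with $2x=\epsilon$. The Gauss map simply does not arise from an automorphism of $A$; in the paper's transcendental description (Proposition~\ref{Lefschetz}, Theorem~\ref{free}) it is the correspondence $\pm z\mapsto \pm u$ determined by $z\in(\Theta+u)\cap(\Theta-u)$, and fixed-point-freeness comes from the fact that a fixed point would force $\theta(0)=0$, contradicting smoothness of $\Theta$. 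Your combinatorial remark that ``no flag is self-incident'' only rules out fixed points on the $32$ curves $E_i,D_i$; it says nothing about smooth points of $X$ away from the tropes.

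The paper's algebraic argument for fixed-point-freeness (valid in char $\neq2$) is the one you should supply: verify it by hand for a single member (the Cefal\'u quartic), deduce $\gamma^*\omega=-\omega$ there and hence on the whole connected family, conclude that $\gamma$ can have no \emph{isolated} fixed points (local linearization at such a point would give $\omega\mapsto\omega$), and then rule out a fixed \emph{curve} $C$ by intersection theory: $C\cdot E_i=C\cdot D_i=0$ forces $C\cdot H=0$ via $2D_i\equiv H-\sum_{j} E_j$, which is impossible.
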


We give two proofs for the above statement, a short transcendental proof over $\CC$ using theta functions, and 
then  an algebraic
one (valid in characteristic $\neq 2$) which  relies on earlier results, especially the ones
by  Gonzalez-Dorrego (\cite{dorrego}).

Observe that (2) provides, over $\CC$,  a simpler proof of an important result of Keum \cite{keum1}, that every Kummer surface is an
unramified covering of an Enriques surface; and it extends the result also to fields $K$ of positive characteristic. 

Many explicit equations have been given for Kummer quartics (I refer for this  to the beautiful survey by Dolgachev \cite{igornams},
which appeared just before this paper was be finished), but here we use new free parameters for Kummer quartics.

These are based on the following theorem, essentially due to Gonzalez-Dorrego, extending (iv) above to all Kummer quartic surfaces
over an algebraically closed field $K$  of characteristic $\neq2$.

We consider for this purpose the action on $\PP^3$ of the (already mentioned) group 
$$ \sK : = (\ZZ/2)^4 = (\ZZ/2)^2 \oplus (\ZZ/2)^2$$
such that the first summand acts via the group of double transpositions, the second summand 
through $ z_i \mapsto \e_i z_i$, for $ \e_i = \pm 1, \   \e_1 = \e_1  \e_2 \e_3 \e_4 = 1$,
and we recall that a nondegenerate $(16_6, 16_6)$ configuration in $\PP^3$ is a configuration $\sN, \sN'$ of $16$ points
and $16$ planes such that each point in $\sN$ belongs to $6$ planes of the set $\sN'$ , and each plane in $\sN'$ contains 
exactly $6$ points of the set $\sN$, and nondegenerate means that two planes of $\sN'$ contain exactly $2$
common points of $\sN$.

\begin{theo}\label{166}
Every nondegenerate $(16_6, 16_6)$ configuration in $\PP^3_K$, $K$ algebraically closed with $char (K) \neq 2$ is projectively equivalent to the configuration $(\sN, \sN')$
where $\sN'$ is the set of planes orthogonal to the elements of  $\sN$, and $\sN$ is  the $ \sK  = (\ZZ/2)^4$ orbit 
 of a point
$(a_1, a_2, a_3, a_4) \in \PP^3$ such that: 

(I) no two coordinates are equal to zero, $\sum_i a_i^2 \neq 0$, 
$$ (II) \ a_1 a_2 \pm  a_3 a_4  \neq 0, a_1 a_3 \pm  a_2 a_4  \neq 0, a_1 a_4 \pm  a_2 a_3 \neq 0,$$
$$ (III) \ a_1^2 + a_2^2 \neq   a_3^2 +  a_4^2, a_1^2 +  a_3^2 \neq  a_2^2 +  a_4^2  , a_1^2 +  a_4^2  \neq  a_2^2 +  a_3 ^2.$$ 

For each such configuration $\sN$, which is strictly self-dual, there is a unique Kummer quartic $X$ having $\sN$
as singular set, and  all Kummer quartics arise in this way.

The equation of $X$ has the  Hudson normal form 
$$  \al_0 (\sum_i z_i^4)  + 2  \al_{01} (z_1^2 z_2^2 + z_3^2 z_4^2) + 2 \al_{10} (z_1^2 z_3^2 + z_2^2 z_4^2) + 2 \al_{11} (z_1^2 z_4^2 + z_2^2 z_3^2) + 4  \be z_1 z_2 z_3 z_4 = 0$$
 where, setting 
$b_i : = a_i^2, b := a_1 a_2 a_3 a_4$,  the vector of coefficients $$v : = ^t (\al_0, \al_{01}, \al_{10}, \al_{11},\be)$$ is given, for $b \neq 0$,
 by  the solution of the system of linear equations
$$ B v = 0$$
where $B$ is the matrix
	\[
		B : = \left( 
			\begin{array}{ccccc}
				b_1^2 & b_1 b_2 & b_1 b_3 & b_1 b_4 & b\\
				b_2^2 & b_2 b_1 & b_2 b_4 & b_2 b_3 & b\\
				b_3^2 & b_3 b_4 & b_3 b_1 & b_3 b_2 & b\\
				b_4^2 & b_4 b_3 & b_4 b_2 &  b_4 b_1& b\\
				
			\end{array}
		\right).
	\]
 The case  $b =0$  is governed by other equations and, assuming without loss of generality that $a_1 = 0,$ we obtain  $\be = 0$, and we get the coefficients:
\begin{align*}
		 \al_0 &= 2 b_2 b_3 b_4 \\
		\al_{01} &=  b_2 (b_2^2 - b_3^2 - b_4^2) \\
		\al_{10} &= b_3 ( b_3^2 - b_4^2 - b_2^2) \\
		\al_{11} &= b_4 ( b_4^2 - b_2^2 -  b_3^2 ) \\
	\end{align*}

\end{theo}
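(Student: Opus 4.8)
The plan is to reduce the statement to the classification and projective rigidity of nondegenerate $(16_6,16_6)$ configurations, essentially due to Gonzalez-Dorrego \cite{dorrego}, and then to make the unique Kummer quartic through such a configuration explicit by linear algebra. For the first assertion I would quote from \cite{dorrego} that any two nondegenerate $(16_6,16_6)$ configurations in $\PP^3_K$ are projectively equivalent, and that each is the singular locus of a unique irreducible quartic, the $16$ points being its $16$ nodes; it then suffices to exhibit one standard model and to determine exactly when the $\sK$-orbit $\sN$ of a point $a=(a_1,a_2,a_3,a_4)$ is such a configuration. The group $\sK=(\ZZ/2)^2\oplus(\ZZ/2)^2$ acts through permutation matrices (the double transpositions) and diagonal sign matrices of determinant $1$, all of which lie in $O(4,K)$; since the standard scalar product has symmetric (indeed identity) matrix, the map $a_i\mapsto\{x:\langle a_i,x\rangle=0\}$ is of the form ${}^{t}BB$ and sends $\sN$ to a set $\sN'$ of $16$ planes making the pair $(\sN,\sN')$ strictly self-dual. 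A direct evaluation of the Gram pattern $\langle a_i,a_j\rangle$ over the orbit shows that the plane $a^{\perp}$ contains automatically two points out of each of the three double-transposition cosets of $\sK$ — by identities such as $\langle(a_1,a_2,a_3,a_4),(a_2,-a_1,a_4,-a_3)\rangle=0$ — hence the six points required for a $16_6$ incidence, while the remaining inner products over the orbit are $\sum_ia_i^2$, the three sums $a_i^2+a_j^2-a_k^2-a_l^2$, and the six quantities $a_ia_j\pm a_ka_l$; their non-vanishing is precisely conditions (I), (III) and (II). The same bookkeeping applied to the possible coincidences among the $16$ points shows that they are distinct exactly when no two coordinates vanish and (II), (III) hold. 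Thus the $\sK$-orbits obeying (I)–(III) are, up to projectivity, all the nondegenerate $(16_6,16_6)$ configurations, which together with the quoted results yields assertion (1) and the existence and uniqueness in (2).

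For the Hudson normal form I would record that the $\sK$-invariant quartic forms make up a $5$-dimensional space, with basis $\sum_iz_i^4$, $z_1^2z_2^2+z_3^2z_4^2$, $z_1^2z_3^2+z_2^2z_4^2$, $z_1^2z_4^2+z_2^2z_3^2$, $z_1z_2z_3z_4$ (a short count of the orbits of degree-$4$ monomials under $\sK$) — that is, the forms with coefficient vector $v={}^{t}(\al_0,\al_{01},\al_{10},\al_{11},\be)$. Since $\sK$ preserves $\sN$, it carries the Kummer quartic $X$ with $\Sing X=\sN$ to another quartic with the same singular locus and hence fixes it, so the equation of $X$ lies in this $5$-dimensional space. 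A form $f$ in this space, being $\sK$-invariant while $\sN$ is a single $\sK$-orbit, is singular along all of $\sN$ as soon as it is singular at $a$; when $b=a_1a_2a_3a_4\neq0$ every $a_i\neq0$, so by Euler's identity singularity at $a$ is equivalent to $a_i\,\partial_if(a)=0$ for $i=1,\dots,4$, and expanding these four expressions in terms of $b_i=a_i^2$ and $b$ identifies them with the rows of $Bv=0$. As the linear system of quartics singular along $\sN$ is one-dimensional, $\ker B$ is the line spanned by the coefficient vector of $X$; in particular $\rank B=4$. This last fact can alternatively be verified directly under (I)–(III) by computing a $4\times4$ minor of $B$, factoring it, and matching its irreducible factors with the forms appearing in (II) and (III); this determinant manipulation is the one genuinely laborious, though purely mechanical, step, and the main obstacle for a proof not appealing to the uniqueness statement in \cite{dorrego}.

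The case $b=0$ has to be treated apart, since then $a_i\,\partial_if(a)=0$ no longer encodes singularity at $a$. Assuming without loss of generality $a_1=0$, the equation $\partial_1f(a)=0$ reads $\be\,a_2a_3a_4=0$ and so forces $\be=0$, while $\partial_2f(a)=\partial_3f(a)=\partial_4f(a)=0$ become $\al_0b_2+\al_{10}b_4+\al_{11}b_3=0$ together with its two cyclic analogues in $(\al_0,\al_{01},\al_{10},\al_{11})$; a direct substitution confirms that $(\al_0,\al_{01},\al_{10},\al_{11})=\bigl(2b_2b_3b_4,\ b_2(b_2^2-b_3^2-b_4^2),\ b_3(b_3^2-b_4^2-b_2^2),\ b_4(b_4^2-b_2^2-b_3^2)\bigr)$ solves these, and under (I)–(III) it spans the solution line, giving the equation of $X$.

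Finally, that every Kummer quartic arises in this way follows since the $16$ nodes of any Kummer quartic form a nondegenerate $(16_6,16_6)$ configuration — the $16$ tropes realizing the incidences — so by assertion (1) it is projectively equivalent to one of the normal-form quartics above. Apart from the determinant check mentioned above, everything reduces to bookkeeping with the $\sK$-action and Euler's relation.
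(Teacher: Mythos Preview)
Your proof is correct and follows essentially the same route as the paper: both invoke Gonzalez-Dorrego's classification of nondegenerate $(16_6,16_6)$ configurations and the uniqueness of a Kummer quartic with prescribed nodes, then pin down the Hudson coefficients by imposing the singularity conditions at $a$ inside the $5$-dimensional space of $\sK$-invariant quartics. The only procedural difference is that the paper obtains the $b=0$ coefficients elsewhere (in the Segre-construction section) by inverting the matrix $\sA$ of the dual quadric rather than by solving the gradient equations directly as you do, and you spell out more than the paper's terse treatment does (the identification of the rows of $B$ with $a_i\,\partial_i f(a)$, the Euler-relation reduction, and the combinatorial justification of conditions (I)--(III)).
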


 For $b \neq 0$, by the theorem of Rouch\'e-Capelli, the coefficients of $v$ are given by the determinants of the $4\times 4$ minors
 of the matrix $B$, which we omit to spell out in detail.

 The case $b=0$ is quite interesting and indeed related to how we found the Cefal\'u surface as the most symmetric among the Segre-type
 Kummer quartics . These are the quartics whose equation is of the form $ Q( z_1^2 , z_2^2 , z_3^2 , z_4^2)$
 where $Q(x)=0$ is a smooth quadric and the four planes $x_i = 0$ are tangent to it at a point not lying on the edges of
 the tetrahedron $T : = \{x_1 x_2 x_3 x_4 = 0\}$.
 
 Observe in fact that the dual quadric $Q^*(y) =0$ has a matrix with diagonal entries equal to $0$,
 hence the most symmetric solution is $\s_2 (y) = 0$, leading to the Cefal\'u quartic.
 
 We devote a section to the geometry of these special Kummer quartics, and show that these are  the ones whose associated Abelian surface $A$
 is isogenous to a product of elliptic curves $A_1 \times A_2$  through an isogeny with kernel group $(\ZZ/2)^2$. And we use them in order to show a result which is used in \cite{nodalsurfaces}
 to determine the components of the variety of nodal K3 surfaces.

 \begin{theo}\label{real}
For each degree $ d = 4m$ there is a nodal K3 surface $X'$ of degree $d$ with $16$ nodes which is defined over $\RR$, and such that
all its singular points are defined over $\RR$.

 Similarly,  for each degree $ d = 4m - 2$ there is a nodal K3 surface $X''$ of degree $d$ with $15$ nodes which is defined over $\RR$, and such that
all its singular points are defined over $\RR$.

The same results holds replacing $\RR$ by an algebraically closed field $K$ of characteristic $\neq2$.

\end{theo}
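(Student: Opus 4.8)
\emph{Reduction.} I would first restate the problem intrinsically: it suffices to produce, over $\RR$ (resp. over $K$), a smooth projective K3 surface $S$, a set $\{E_1,\dots,E_\nu\}$ of pairwise disjoint smooth rational curves with $\nu=16$ (resp. $\nu=15$), and a nef and big, non-hyperelliptic class $L\in\Pic(S)$ — all defined over the base field — with $L^2=d$, $L\cdot E_i=0$ for every $i$, and $L\cdot C>0$ for every irreducible curve $C\notin\{E_1,\dots,E_\nu\}$. Then $\varphi_{|L|}$ realizes $S$ as the minimal resolution of a K3 surface $X'$ (resp. $X''$) of degree $d$ in $\PP^{d/2+1}$ whose singular locus is exactly the image of $\bigcup_i E_i$; since each $E_i$ is a lone $(-2)$-curve (an $A_1$-configuration) it collapses to an ordinary node, nothing worse occurs, and the node count is $\nu$. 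As everything is defined over the base field, so are $X'$ and its singular points.

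\emph{Choice of surface.} A K3 carrying $16$ disjoint smooth rational curves is a Kummer surface (Nikulin), so $S$ must be the resolution of a Kummer quartic, with the $E_{ij}$ its $16$ node curves. To have room among the classes orthogonal to all the $E_{ij}$, I would take $S=\mathrm{Kum}(\mathrm{Jac}(C))$ for a bielliptic genus-two curve $C\colon y^2=f(x)$ with $f$ an \emph{even} sextic having six distinct roots in the base field: then all six Weierstrass points, hence all $16$ two-torsion points of $A=\mathrm{Jac}(C)$, hence all $16$ nodes of $X$, are defined over the base field; the bielliptic structure gives a $(\ZZ/2)^2$-isogeny $A\to E_1\times E_2$, so $X$ is of Segre type; the two elliptic pencils give classes $F_1,F_2\in\Pic(S)$ with $F_j\cdot E_{ij}=0$; and if moreover $E_1$ and $E_2$ are chosen isogenous over the base field, $\rho(S)\ge 19$, so the lattice $\mathcal L:=(\sum E_{ij})^{\perp}\subset\Pic(S)$ has rank $\ge 3$, is indefinite, is spanned by base-field classes, and therefore represents every sufficiently large integer in the relevant residue class, in particular (away from a few small cases) every $4m$ and every $4m-2$.

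\emph{Construction.} For $d=4m$ (so $\nu=16$) I would pick inside $\mathcal L$ a nef, non-hyperelliptic class $L$ with $L^2=4m$, assembled from $H$ (the quartic polarization, handling $d=4,16,\dots$) and from $F_1,F_2$ and the extra class of $\mathcal L$. Then $L\cdot E_{ij}=0$ for all sixteen curves, and any further curve $C$ with $L\cdot C=0$ would lie on the face of the nef cone cut out by the $E_{ij}$; since the only square-$(-2)$ vectors of the Kummer lattice are the $\pm E_{ij}$, that face contains no other curve, so $\varphi_{|L|}(S)$ has exactly $16$ nodes. For $d=4m-2$ (so $\nu=15$) note first that $16$ nodes are impossible: the orthogonal complement of the Kummer lattice inside the K3 lattice is $U(2)^{\oplus 3}$, on which every square is divisible by $4$, so a polarization orthogonal to all $16$ node curves has $L^2\equiv 0\pmod 4$. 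Discarding one node curve, say $E_{16}$, I would find $L$ in the (one unit bigger) orthogonal complement of $\{E_1,\dots,E_{15}\}$ with $L^2=4m-2$, $L\cdot E_{16}>0$, $L$ nef and non-hyperelliptic, and $L\cdot C>0$ for every other curve; the prototype is the classical projection of a Kummer quartic from one of its nodes, which drops the degree from $4$ to $2$ and the node count from $16$ to $15$, the case $m=1$ being the double plane branched over six general lines with its $\binom{6}{2}$ nodes.

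\emph{Main obstacle.} The real work is entirely in choosing $L$ (and, for $\nu=15$, which node to discard) so that for \emph{every} $m$ it is simultaneously of the prescribed square, nef and not hyperelliptic (so that $\varphi_{|L|}$ maps onto a genuine degree-$d$ surface rather than a double plane or a double quadric), and has contracted locus exactly $A_1^{\oplus\nu}$: in the $15$-node case one must in particular keep a central component of a reducible fibre, or a section, from acquiring intersection $0$ with $L$, which would fuse three nodes into a $D_4$ point. All of this is lattice bookkeeping in $\Pic(S)$ together with a Nakai--Moishezon verification; pinning it down uniformly in $m$ — and keeping $L$ and the $E_i$ defined over $\RR$ (resp. $K$), which is automatic since the whole $\Pic$ of the chosen $S$ is — is the delicate part, while the reality statement itself then follows formally.
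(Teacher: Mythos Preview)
Your setup is essentially the paper's: a Segre-type Kummer surface (equivalently $\mathrm{Kum}(A)$ with $A$ isogenous to a product of elliptic curves via a $(\ZZ/2)^2$-kernel), the two elliptic pencils $F_1,F_2$ coming from the double cover $X\to Q\cong\PP^1\times\PP^1$, and, for $d=4m-2$, projection from a node. But you stop exactly where the proof has to begin. You write that finding $L$ ``is lattice bookkeeping\dots\ pinning it down uniformly in $m$\dots\ is the delicate part,'' and then do not do it. That is the whole theorem.

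The paper's point is that no bookkeeping is needed: one simply takes
\[
L \;=\; F_1 + m\,F_2
\]
(in the paper's notation $H'_1+mH'_2$). This class has $L^2=2F_1\cdot F_2\cdot m=4m$ since the two pencils form a degree-$2$ cover of $\PP^1\times\PP^1$; it kills all sixteen $E_i$ because $F_1,F_2$ are pulled back from $Q$ through the nodal model; it is ample on the nodal model because $|L|\supset |F_1+F_2|+(m-1)F_2$ already separates points birationally and then Saint-Donat applies; and nothing else is contracted precisely because $L$ is ample there. So the ``face of the nef cone'' argument you sketch (which, incidentally, is not quite right: a $(-2)$-curve $C$ with $L\cdot C=0$ need not lie in the Kummer lattice, only in $L^{\perp}$, and once you force $\rho\ge 19$ that orthogonal complement certainly contains extra $(-2)$-classes) is replaced by a one-line ampleness check.

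On reality you are also too quick. Saying ``the whole $\Pic$ is defined over $\RR$'' hides the issue: complex conjugation could swap the two rulings of $Q$, hence swap $F_1$ and $F_2$, and then $F_1+mF_2$ would not be real for $m>1$. The paper handles this by choosing the parameters so that the symmetric matrix of $Q$ has signature $(2,2)$ (it suffices to take $b_4\gg 0$), i.e.\ $Q$ is hyperbolic over $\RR$; then the isomorphism $Q\cong\PP^1\times\PP^1$ and the factorizations $z_i=\lambda_i\mu_i$ are real, so $F_1,F_2$ are each Galois-stable. Over an algebraically closed field of characteristic $\neq 2$ this step is of course vacuous.

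In short: you have correctly identified the surface and the relevant classes, but the paper's proof is to \emph{write down} $L=F_1+mF_2$ and verify it directly, rather than to argue abstractly that some suitable $L$ exists.
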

 
\begin{rem}
As shown for instance  in \cite{nodalsurfaces} if the degree $d$ is not divisible by $4$, then  $15$ is the maximal number of nodes.
\end{rem}

Later on,  we compare the equations provided by theorem \ref{166} with other representations of Kummer surfaces,
for instance the one given by  Coble \cite{coble} and later van der Geer \cite{vanderGeer} through 
 the Siegel quartic modular threefold, which is the dual of the 10 nodal Segre cubic threefold $\sS$. 
 
 Projective duality is then shown to provide the relation between  two different representations of Kummer surfaces,
 
 (1) the first one as discriminants of the projection to $\PP^3$  of the  Segre cubic threefold $\sS$ with centre a smooth point $x  \in \sS$,
 not lying  in the $15$ planes
 contained in $\sS$,
 
(2)  the second one  as intersections of $\sS^{\vee}$ with a tangent hyperplane.

\bigskip

In the end, we consider the Enriques surface $ Z: = S/ \ga$ obtained from a Kummer K3 surface,  study 
the set of $16$ $(-2)$-curves on it, and the associated graph (we call it the Enriques-Kummer graph), 
 which  turns out to be  associated to a triangulation of the 2-torus.
In this way we are able to show that  $Z$ contracts in several ways to a $4$-nodal Enriques surface.

In a final section we describe  simple illuminating examples, concerning self dual hypersurfaces and simple proofs of results on normal cubic surfaces\footnote{ 
these formed an important chapter  in my 
lectures.}.

A question we could not yet answer is whether the full group of birational automorphisms of the Cefal\'u quartic is  generated
(in characteristic $\neq 2,3, 5$) by 
the subgroup $G$ of projectivities, and by the involutions $\ga$ and $\iota$ ( $\iota$ is  induced by the projection from one node)  (see \cite{kondo} and \cite{keum2} for results 
in the case of  generic Kummer surfaces, that is, with Picard number $17$,  and \cite{keum-kondo} for Kummer surfaces of the product of two elliptic curves);  
as already mentioned, we show at any rate in proposition \ref{fermat}  that  the group $G^s $ of order $192$ is smaller 
than the group $Aut(S)^s$ of  symplectic automorphisms of $S$, which has order $384$ and is isomorphic to the group
of symplectic projectivities of the Fermat quartic surface.

Since the group  $Aut(S)^s$ of symplectic automorphisms of $S$ is a normal subgroup of  $Aut(S)$,
together with the involution $\ga$ it generates a finite subgroup of $Aut(S)$ of cardinality $768= 384 \cdot 2$
(and we can play the same game  with $\iota$).
Keum and Kondo, in Theorem  5.6 of \cite{keum-kondo},  exhibit, for the Kummer surface associated to the product of two Fermat curves, a
finite group of order $1152= 384 \cdot 3$. It seems  interesting to relate  automorphism groups of Kummer surfaces associated
to isogenous Abelian surfaces.

\subsection{Notation} For a point in projective space, we shall freely use two  notations: the vector notation $(a_1, \dots, a_{n+1})$,
and the notation $[a_1, \dots, a_{n+1}]$, which denotes the equivalence class of the above vector (the second notation  is meant to 
point out that in some  cases we
 do not have an equality of vectors, but only an equality of points of projective space).

\section{Proof of Theorem \ref{gauss} over $\CC$ via Theta functions.}

As a preliminary observation, we observe that, as proven by Nikulin \cite{nikulin-kum} (see also \cite{nodalsurfaces} for another argument),
every quartic with exactly $16$ singular points is the Kummer surface $K(A)$ associated to a principally polarized Abelian surface $A$ with smooth Theta divisor.

Hence we consider these Kummer varieties $K(A)$  in general,  under the assumption that $(A, \Theta)$
is not a product of polarized varieties 
$$(A, \Theta) \neq  (A_1, \Theta_1) \times (A_2, \Theta_2).$$

Let therefore $A$ be  a principally polarized complex Abelian variety  
$$ A = \CC^g / \Ga ,\  \Ga : = \ZZ^g \oplus \tau \ZZ^g ,$$ 
where $\tau $ is a $(g \times g)$ matrix in the Siegel upper-halfspace $$ \sH_g : = \{ \tau | \tau = \  ^t \tau , Im (\tau) > 0 \}.$$

The Riemann Theta function 

$$\theta (z, \tau) : = \sum _{m \in \ZZ^g}  {\bf e} (^t m z + \frac{1}{2} \ ^t m\tau m), \ z \in \CC^g, \tau \in  \sH_g, \ {\bf e}(x) : = exp (2 \pi i x),$$
defines a divisor $\Theta$ such that the  line bundle $\hol_A(\Theta)$ has a space of  sections spanned by $\theta$.

The space $H^0(\hol_A(2\Theta))$ embeds  (see \cite{ohbuchi}) the Kummer variety
$$K (A) : = A / \pm 1$$ in the projective space $\PP^{2^g-1}$, and 
\begin{defin}
The canonical basis of the space $H^0(\hol_A(2\Theta))$ is given by the sections 
$$ \theta_{\mu} (z, \tau) : = \theta[a,0] (2 z , 2 \tau) ,$$ for $a = \frac{1}{2} \mu$,
$\mu \in \ZZ^g / 2 \ZZ^g$, and where the theta function with rational characteristics $a,b$ is defined as:
$$\theta[a,b] (w ,  \tau') : =  \sum _{p \in \ZZ^g}  {\bf e} \left( \frac{1}{2} \ ^t (p+a) \tau' (p+a) + ^t (p+a) (w + b)\right).$$
\end{defin}

The following is the main formula for self-duality of Kummer surfaces:

 \begin{prop}\label{Lefschetz}
 For each $ u \in \CC^g$ the product $\theta (z + u ) \theta (z - u )$ defines a section $\psi_u \in H^0(\hol_A(2\Theta))$ with
 $$\psi_u = \sum_{\mu \in \ZZ^g / 2 \ZZ^g} \theta_{\mu} (u, \tau) \theta_{\mu} (z, \tau).$$
 
 \end{prop}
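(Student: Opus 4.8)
The statement to prove is Proposition \ref{Lefschetz}, the classical addition-type formula
$$\theta(z+u)\,\theta(z-u)=\sum_{\mu\in\ZZ^g/2\ZZ^g}\theta_\mu(u,\tau)\,\theta_\mu(z,\tau),$$
together with the assertion that the left-hand side, as a function of $z$, lies in $H^0(\hol_A(2\Theta))$.

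\textbf{Plan of proof.} The first and routine part is the statement that $\psi_u(z):=\theta(z+u)\theta(z-u)$ is a section of $\hol_A(2\Theta)$ in the variable $z$. For this I would simply check the two defining functional equations of a theta function of level $2$: invariance under $z\mapsto z+n$ for $n\in\ZZ^g$ is clear from the corresponding invariance of $\theta$, while under $z\mapsto z+\tau n$ each factor $\theta(z\pm u)$ picks up the automorphy factor ${\bf e}(-\tfrac12\,{}^t n\tau n - {}^t n(z\pm u))$; multiplying the two, the $\pm u$ contributions cancel and one obtains ${\bf e}(-{}^t n\tau n - 2\,{}^t n z)$, which is exactly the automorphy factor for $\hol_A(2\Theta)$. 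Hence $\psi_u\in H^0(\hol_A(2\Theta))$, a space of dimension $2^g$ with basis $\{\theta_\mu(z,\tau)\}_{\mu\in\ZZ^g/2\ZZ^g}$, so there exist coefficients $c_\mu(u)$ with $\psi_u(z)=\sum_\mu c_\mu(u)\theta_\mu(z,\tau)$, and everything reduces to identifying $c_\mu(u)=\theta_\mu(u,\tau)$.

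\textbf{Main computation.} The heart of the matter is a lattice-splitting identity. I would expand the product $\theta(z+u)\theta(z-u)=\sum_{m,n\in\ZZ^g}{\bf e}\!\left({}^t m(z+u)+\tfrac12{}^t m\tau m+{}^t n(z-u)+\tfrac12{}^t n\tau n\right)$ and perform the change of summation variables $p=\tfrac12(m+n)$, $q=\tfrac12(m-n)$. As $(m,n)$ ranges over $\ZZ^g\times\ZZ^g$, the pair $(p,q)$ ranges over those $\tfrac12\ZZ^g\times\tfrac12\ZZ^g$ with $p-q\in\ZZ^g$, i.e.\ over $\bigcup_{\mu\in\ZZ^g/2\ZZ^g}\big((\tfrac12\mu+\ZZ^g)\times(\tfrac12\mu+\ZZ^g)\big)$. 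Using $m=p+q$, $n=p-q$ one gets ${}^t m\tau m+{}^t n\tau n=2\,{}^t p\tau p+2\,{}^t q\tau q$, ${}^t m z+{}^t n z=2\,{}^t p z$, and ${}^t m u-{}^t n u=2\,{}^t q u$, so the summand becomes ${\bf e}\big({}^t p\tau p+2\,{}^t p z\big)\cdot{\bf e}\big({}^t q\tau q+2\,{}^t q u\big)$. Splitting the sum according to the residue class $\mu$ and writing $p=r+\tfrac12\mu$, $q=s+\tfrac12\mu$ with $r,s\in\ZZ^g$, the $p$-sum is $\sum_r{\bf e}\big(\tfrac12{}^t(r+a)(2\tau)(r+a)+{}^t(r+a)(2z)\big)=\theta[a,0](2z,2\tau)=\theta_\mu(z,\tau)$ with $a=\tfrac12\mu$, and the $q$-sum likewise equals $\theta_\mu(u,\tau)$. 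Summing over $\mu$ gives exactly the claimed formula.

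\textbf{Expected obstacle.} There is no deep obstacle here — this is the classical ``duplication/addition formula'' for theta functions (going back to Riemann) and the only place requiring care is bookkeeping: verifying that the reindexing $(m,n)\mapsto(p,q)$ is a bijection onto the stated union of translated sublattices, and matching conventions so that the characteristic $a=\tfrac12\mu$ and the absence of a $b$-characteristic reproduce precisely the definition of $\theta_\mu$ given in the text (in particular the factor $2$ in both the argument and the period matrix). Once the index set is correctly described, the cancellation of the $u$-dependence in the cross terms and the clean separation into a product of a $z$-theta and a $u$-theta is automatic. I would therefore present the change of variables explicitly, note the bijection of index sets, and let the rest be a short direct computation.
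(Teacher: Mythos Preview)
Your proposal is correct and follows essentially the same route as the paper: both expand the double sum and reindex via the sum and difference $(m+n,\,m-n)$, splitting according to the residue class $\mu\in\ZZ^g/2\ZZ^g$ to recognize the factors $\theta_\mu(z,\tau)$ and $\theta_\mu(u,\tau)$. Your presentation is slightly more careful in describing the bijection of index sets and in checking the automorphy factor (which the paper leaves implicit), but the argument is the same classical computation.
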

 \begin{proof}
 $$\psi_u : = \theta (z + u ) \theta (z - u )= \sum_{m,p \in \ZZ^g} {\bf e} \left( ^t m (z+u)  + \frac{1}{2} \ ^t m\tau m + ^t p (z-u)  +
  \frac{1}{2} \ ^t p \tau p\right)  =$$
 $$ = \sum_{m,p \in \ZZ^g} {\bf e} \left( ^t (m+p) (z)  + ^t (m-p) (u)  + \frac{1}{2} \ ^t (m + p) \tau (m+p)   - 
   \ ^t m \tau p\right)  .$$
   
   We set now:
   $$ m' : =  m + p = : 2 (M + \frac{\mu}{2}), \  M  \in \ZZ^g, \ \mu \in \ZZ^g / 2 \ZZ^g; p' : = (m-p) = : 2 M ' + \mu, \ M' := M - p.$$

 Then we can rewrite $\psi_u$ as:
$$  \sum_{\mu \in   \ZZ^g / 2 \ZZ^g}  \sum_{M \in \ZZ^g} {\bf e} \left( 2 \ ^t (M + \frac{\mu}{2}) z + \ ^t (M + \frac{\mu}{2}) \tau (M + \frac{\mu}{2})\right) \cdot$$
$$\cdot  \sum_{p' \in \ZZ^g}  {\bf e} \left(  ^t p' u + \ ^t (M + \frac{\mu}{2}) \tau (M + \frac{\mu}{2}) - ^t m \tau p\right) =$$
$$ = \sum_{\mu} \theta_{\mu}(z, \tau) \sum_{p' \in \ZZ^g}  {\bf e} \left(   2 \ ^t (M' + \frac{\mu}{2}) u + \ ^t (M' + \frac{\mu}{2}) \tau (M' + \frac{\mu}{2}) \right) =$$
$$ =  \sum_{\mu \in \ZZ^g / 2 \ZZ^g}  \theta_{\mu} (z, \tau) \theta_{\mu} (u, \tau).$$
 
 \end{proof}
 
 \begin{cor}
 Consider the Kummer variety $ K(A)$ of a principally polarized Abelian variety  with irreducible Theta divisor, embedded in 
 $\PP^{2^g-1}$ by the canonical basis 
 for $H^0(\hol_A(2\Theta))$. 
 
 Then $ K(A)$ is contained in the dual variety of $K(A)$.
 
  In particular, for
 dimension $g=2$, we have self duality
 $$K(A) = K(A) ^{\vee}.$$
 \end{cor}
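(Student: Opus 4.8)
The plan is to reduce the statement to a tangency computation and then feed in the identity of Proposition \ref{Lefschetz}. Recall that, over $\CC$, the dual variety $K(A)^{\vee}\subseteq (\PP^{2^g-1})^{\vee}$ is the Zariski closure of the set of hyperplanes $H$ containing the embedded tangent space $T_xK(A)$ at some smooth point $x\in K(A)$. Write $\kappa\colon A\to\PP^{2^g-1}$ for the morphism $z\mapsto [\theta_{\mu}(z,\tau)]_{\mu}$; it factors as $\kappa=\iota\circ p$, where $p\colon A\to K(A)=A/\pm1$ is the quotient (étale away from the $2$-torsion $\tfrac12\Ga$, with the remaining points mapping to smooth points of $K(A)$) and $\iota\colon K(A)\hookrightarrow\PP^{2^g-1}$ is the embedding by the canonical basis. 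Finally, identify $(\PP^{2^g-1})^{\vee}$ with $\PP^{2^g-1}$ via the standard symmetric pairing on the canonical coordinates, so that $c=[c_{\mu}]$ corresponds to the hyperplane $H_c=\{\sum_{\mu}c_{\mu}X_{\mu}=0\}$; this is precisely the identification with respect to which strict self-duality is being asserted.

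First I would fix $u\in\CC^g$ and take $c_{\mu}:=\theta_{\mu}(u,\tau)$, so that $H_c$ corresponds to the point $\kappa(u)=\iota(p(u))\in K(A)$. By Proposition \ref{Lefschetz}, the section of $\hol_A(2\Theta)$ defining $H_c$ pulls back under $\kappa$ to $\psi_u(z)=\theta(z+u,\tau)\,\theta(z-u,\tau)$, whose zero divisor on $A$ is $D_u+D_{-u}$, with $D_v:=\{z:\theta(z+v,\tau)=0\}$ a translate of $\Theta$. Since $g\ge 2$ and $\Theta$ is ample, $D_u\cap D_{-u}\ne\emptyset$; an elementary incidence-variety argument (for generic $u$ the two translates are distinct and their intersection moves with $u$, while $\tfrac12\Ga$ is finite) shows that, for all $u$ in a dense subset of $A$, one may choose $z_0\in D_u\cap D_{-u}$ with $z_0\notin\tfrac12\Ga$. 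By the product rule $\psi_u(z_0)=0$ and $d(\psi_u)_{z_0}=0$; transporting this through $\kappa=\iota\circ p$ and using that $p$ is a local isomorphism at $z_0$, one gets that the linear form cutting out $H_c$ vanishes at the smooth point $\iota(p(z_0))$ with vanishing differential along $T_{\iota(p(z_0))}K(A)$, i.e.\ $H_c\supseteq T_{\iota(p(z_0))}K(A)$. Hence $\kappa(u)=H_c\in K(A)^{\vee}$.

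As $u$ runs through a dense subset of $A$, the points $\kappa(u)$ are dense in $K(A)$, so, $K(A)^{\vee}$ being closed, $K(A)\subseteq K(A)^{\vee}$. For $g=2$, $K(A)$ is an irreducible quartic surface, hence $K(A)^{\vee}$ is irreducible; moreover $K(A)^{\vee}\ne(\PP^3)^{\vee}$, since a general hyperplane meets $K(A)$ transversally at its smooth points, so $\dim K(A)^{\vee}\le 2$, while the inclusion just proved forces $\dim K(A)^{\vee}\ge 2$. A $2$-dimensional irreducible variety contained in a $2$-dimensional one must coincide with it, so $K(A)=K(A)^{\vee}$ in the canonical coordinates.

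The step I expect to be the main obstacle is exactly the passage from ``$\psi_u$ vanishes to order $2$ at $z_0$'' to ``$H_c$ is tangent to $K(A)$ at a smooth point'': one must ensure $z_0$ can be chosen outside the $2$-torsion (so that $p$ is étale there and $\iota(p(z_0))$ is a genuine smooth point), and one must keep track throughout that the duality in play is the one attached to the canonical basis, i.e.\ the symmetric identification $(\PP^{2^g-1})^{\vee}\cong\PP^{2^g-1}$ — this is what upgrades ``projectively equivalent to its dual'' to the strict ``equal to its dual''. The remaining ingredients — the genericity count producing $z_0$ and the dimension estimate for $K(A)^{\vee}$ when $g=2$ — are routine, the real content already being in Proposition \ref{Lefschetz}.
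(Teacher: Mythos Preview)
Your proof is correct and follows essentially the same approach as the paper: both use Proposition \ref{Lefschetz} to identify the hyperplane $H_u$ with the divisor $(\Theta+u)+(\Theta-u)$, pick a point in $(\Theta+u)\cap(\Theta-u)$ to produce a singular point of the hyperplane section (hence tangency at a smooth point for generic $u$), and then conclude for $g=2$ by irreducibility and the dimension bound on $K(A)^{\vee}$. Your write-up is somewhat more explicit about the \'etale locus of $p$ and the identification of $(\PP^{2^g-1})^{\vee}$ with $\PP^{2^g-1}$ via the canonical basis, but the argument is the same; one small wording slip is the parenthetical ``with the remaining points mapping to smooth points of $K(A)$'', which as written suggests the $2$-torsion images are smooth---they are not---though this does not affect your proof since you correctly avoid $2$-torsion in choosing $z_0$.
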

 \begin{proof}
 By proposition \ref{Lefschetz} we have that to each $u$ is associated an element $\psi_u  \in H^0(\hol_A(2\Theta))$,
 to which  there corresponds a  Hyperplane section $H_u$ of $K(A)$ whose pull-back is the divisor $ (\Theta + u) + (\Theta - u)$.
 
 Since the two divisors  $ (\Theta + u) $ and $ (\Theta - u)$ intersect in some pair of  points $z$ and $-z$ (notice that all sections are
 even functions), we obtain that $H_u$ is singular, and its singular points are, for general $u$, not points of $2$-torsion in $A$.
 
 For $g=2$ we conclude since $K(A) ^{\vee}$ has dimension at most $2$ and is irreducible.
 
  \end{proof}
  
  \begin{theo}\label{free}
  Let $K(A)$ be the Kummer surface of a principally polarized Abelian surface ($g=2$),  with irreducible Theta divisor, embedded in $\PP^3$ 
  by the canonical basis.
  
  Then the Gauss map $\ga : K(A) \dashrightarrow K(A)$ induces  a fixed-point free involution on the minimal resolution of $K(A)$.
  
  $\ga$ blows up the node $P_{\eta} $ of $K(A)$,  image point of a $2$-torsion point $\eta$, to the conic $C_{\eta} $ image
  of $ \Theta + \eta$, such that $2 C_{\eta} $ is a plane section $H_{\eta} $  of $K(A)$.
  \end{theo}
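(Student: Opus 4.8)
The plan is to carry everything out in the theta coordinates $[\theta_\mu(z,\tau)]_\mu$ on $K(A)\subset\PP^3$, using Proposition~\ref{Lefschetz} and the corollary following it. Those give not just $K(A)=K(A)^\vee$ but the precise shape of the self-duality: under the standard identification $(\PP^3)^\vee\cong\PP^3$ the point $[\theta_\mu(u)]\in K(A)$ is identified with the plane $H_u=\{\sum_\mu\theta_\mu(u)X_\mu=0\}$, and $H_u\cap K(A)$ pulls back to $A$ as the divisor $(\Theta+u)+(\Theta-u)$. First I would use this to pin down the Gauss map. For general $u$ the smooth curves $\Theta+u$ and $\Theta-u$ meet transversally in a pair $\{z,-z\}$ (here $\Theta^2=2$ and $\Theta=-\Theta$), so, the covering $A\to K(A)$ being a local isomorphism off the $2$-torsion, $H_u\cap K(A)$ has an ordinary node at the smooth point $[\theta_\mu(z)]$; a plane section of a smooth surface with an ordinary node at $p$ is a simple tangent plane at $p$, so $H_u=T_{[\theta_\mu(z)]}K(A)$. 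Combined with the self-duality this reads
$$\ga\colon[\theta_\mu(z)]\longmapsto[\theta_\mu(u)],\qquad z-u\in\Theta\ \text{and}\ z+u\in\Theta ,$$
the two conditions determining $u$ up to sign (indeed $u\in(z-\Theta)\cap(\Theta-z)$, a set of two opposite points for general $z$), hence determining the point $[\theta_\mu(u)]$; and every general point of $K(A)$ is such a $[\theta_\mu(z)]$ since $(z-\Theta)\cap(\Theta-z)\neq\emptyset$. One checks that replacing $z$ by $-z$ gives the same condition, so $\ga$ is a well-defined rational map $K(A)\dashrightarrow K(A)$.

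Next I would observe that the relation ``$z-u\in\Theta$ and $z+u\in\Theta$'' is symmetric in $z$ and $u$, again because $\Theta=-\Theta$; hence $\ga\circ\ga=\id$ on a dense open set, so $\ga$ is a non-trivial birational involution of $K(A)$ (non-trivial because for general $z$ one has $u\neq\pm z$). Lifting $\ga$ to the minimal resolution $S$ yields a birational self-map of a K3 surface, which is automatically an automorphism since a K3 surface is its own unique minimal model; thus $\ga\in\Aut(S)$ with $\ga^2=\id$. I expect this step — the identification of $\ga$ with the $z\leftrightarrow u$ correspondence and the passage to $S$ — to be where the substance lies; once it is in place, the remaining assertions follow formally.

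Then, for the behaviour over a node $P_\eta=[\theta_\mu(\eta)]$ with $\eta\in A[2]$: at $z=\eta$ both conditions collapse to the single condition $u\in\Theta+\eta$ (using $-\eta=\eta$ and $-\Theta=\Theta$), so $\ga$ is undefined at $P_\eta$, and since $P_\eta$ is an $A_1$-point the map $S\to K(A)$ is a single blow-up there, with exceptional $(-2)$-curve $E_\eta$. The automorphism $\ga$ must then carry $E_\eta$ onto (the proper transform of) the image $C_\eta$ of $\Theta+\eta$ in $K(A)$, this being the closure of the set of limit values of $\ga(p_z)$ as $z\to\eta$. A routine computation identifies $C_\eta$: $\Theta+\eta$ is smooth of genus $2$ and $(-1)$-invariant, $\hol_{\PP^3}(1)$ restricts to it with degree $2\Theta^2=4$, so $C_\eta$ is a rational curve of degree $2$, i.e. a conic; and $H_\eta$ cuts $K(A)$ in $2C_\eta$ because its pull-back to $A$ is $(\Theta+\eta)+(\Theta-\eta)=2(\Theta+\eta)$. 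This is exactly the last assertion of the theorem, with $\ga$ restricting to an isomorphism of $E_\eta$ onto the proper transform of $C_\eta$.

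It remains to prove that $\ga$ has no fixed point on $S$, which I see as the one place needing an external input. A fixed point of $\ga$ over a smooth point $[\theta_\mu(z)]$ of $K(A)$ would force $[\theta_\mu(u)]=[\theta_\mu(z)]$ with $z\pm u\in\Theta$, hence $u=\pm z$, hence $0\in\Theta$; a fixed point on some $E_\eta$ would have to lie in $E_\eta\cap\ga(E_\eta)$, but $\ga(E_\eta)$ is the proper transform of $C_\eta$, and this meets $E_\eta$ only when $P_\eta\in C_\eta$, i.e. $\eta\in\Theta+\eta$, i.e. again $0\in\Theta$. So the whole statement reduces to the fact that the origin does not lie on $\Theta$. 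For this I would invoke the theory of symmetric theta divisors: $\Theta=\{\theta[0,0](\cdot,\tau)=0\}$ is symmetric, and the multiplicity of a symmetric theta divisor at a $2$-torsion point $\tfrac12(a+\tau b)$ is congruent $\bmod\,2$ to ${}^t a b$, which at the origin is $0$; smoothness of $\Theta$ then forces $\mathrm{mult}_0\Theta=0$, i.e. $0\notin\Theta$. Therefore $\ga$ is fixed-point free and $S/\ga$ is an Enriques surface, which completes the proof.
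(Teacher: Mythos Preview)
Your argument is correct and follows essentially the same route as the paper: identify $\ga$ via the addition formula of Proposition~\ref{Lefschetz}, use the symmetry of the relation in $z,u$ (equivalently, biduality) for the involution property, and reduce fixed-point freeness both at smooth points and along the exceptional curves to the single statement $0\notin\Theta$. The one place you work harder than necessary is this last step: rather than invoking the mod~$2$ parity formula for multiplicities of a symmetric theta divisor at $2$-torsion points, the paper simply observes that $\theta$ is even, so $\theta(0)=0$ would force all first partials of $\theta$ to vanish at $0$, making $\Theta$ singular there---contradicting the hypothesis that $\Theta$ is a smooth genus~$2$ curve.
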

 \begin{proof}
In this case, $\Theta$ being an irreducibe divisor with $\Theta^2=2$, $ (\Theta + u) \cap (\Theta - u)$
consists either of two distinct points $z$ and $-z$ or of a point $z$ of $2$-torsion counted with multiplicity $2$,
unless $ (\Theta + u) = (\Theta - u) \Leftrightarrow (\Theta + 2 u) = \Theta .$

The last case, since $\Theta$ is aperiodic, can only happen if  $2 u=0$.

Let us argue first for $ 2u \neq 0$.

We have  $\ga(\pm z) =\pm  u \in K(A)$, and if we had a fixed point on $K(A)$, then we would have $ u \in (\Theta + u) \cap (\Theta - u)$,
hence $\theta(0) = 0$; since $\theta$ is even, it follows that $\theta$ vanishes of order $2$ at $0$, contradicting the
fact that the divisor $\Theta$ is a smooth curve of genus $g=2$.

If instead $u$ is a $2$-torsion point $\eta$, then there is a plane $H_{\eta}$ which cuts the curve $C_{\eta}$ with multiplicity two,
so that the Gauss map contracts the conic image of $C_{\eta}$ to the point $\eta$, by proposition \ref{Lefschetz}.

Te rest of the proof is identical, since $\eta \in C_{\eta} \rightarrow 0 \in \Theta$ is again a contradiction.

$\ga$ is an involution ($\ga^2 = Id$) since the Gauss map of the dual variety is the rational  inverse 
of the original variety.

  \end{proof}
  
  \begin{rem}
 Many authors use  the weaker assertion that a Kummer quartic $X$ is projectively equivalent to its dual surface in
 order to  obtain an involution
  on the minimal resolution $S$,  called a switch, which exchanges the $(-2)$-curves $E_i$ which are the blow up of the nodes
 with the $16$ tropes $E'_i$,  the strict transforms of the conics $C_{\eta}$ (see for instance \cite{klein}, \cite{kondo}). They  compose the Gauss map with a projectivity $B$   giving  an isomorphism of
 $X^{\vee}$ with $X$. Denote by $f$ the birational automorphism thus obtained; then the square $f\circ f$ of $f$ yields an automorphism of $S$ leaving invariant the set of  all the nodal curves and the set of tropes; since the pull back via $f$ of $H$ equals $ 3H - \sum_1^{16} E_i$,
 the pull back via $f\circ f$ of $H$ equals $$ 3 ( 3H - \sum_1^{16}  E_i ) - \sum_1^{16}  E'_i  \equiv H,$$
because  $$ 2 \sum_1^{16}  E'_i \equiv 16 H - 6 \sum_1^{16}  E_i \Rightarrow   \sum_1^{16}  E'_i \equiv 8 H - 3 \sum_1^{16}  E_i.$$
 
Since $f\circ f$ sends $ H \mapsto H$,  
 the square $f\circ f$ is  a projectivity of $X$.  What is incorrect is the  claim of several authors that this square is the identity.
 
In fact, we have shown for the Cefal\'u quartic that $\ga^2 = 1$ and that $\ga$ centralizes the whole group $G$.
 If we take now as $B$ a projectivity
$ g \in G$, we have that $ \ga g \ga g = \ga^2 g^2 = g^2 \neq Id$ if $g$ does not have order $2$
(and such a $g$ exists  for the Cefal\'u quartic).
  
  \end{rem}
 
 We end this section observing that the image of a $2$-torsion point $\eta$ easily determines all others,
 essentially because (\cite{igusa}, \cite{mumfordtheta}) $H^0(\hol_A(2\Theta))$ is the Stone-von Neumann representation of
 the Heisenberg group $H(G)$, central extension by $\CC^*$ of 
 $$ G \times G^* \cong (\ZZ/ 2\ZZ)^g \times (\tau \ZZ^g / 2 \tau \ZZ^g).$$
 This representation (which is studied and used in \cite{caci}) is the space of functions on $G$, on which $G$ acts by translations, while the dual group of characters $G^* = Hom (G, \CC^*)$
 acts by multiplication with the given character.
  
  In simpler words (where we look at the associated action on projective space)  recalling that
 
 $$ \theta_{\mu} (z, \tau) : = \theta[a,0] (2 z , 2 \tau) =  \sum _{p \in \ZZ^g}  {\bf e} \left(  \ ^t (p+a) \tau (p+a) + ^t (p+a) (2z)\right)$$
  where $a = \frac{1}{2} \mu$, and $\mu \in \ZZ^g / 2 \ZZ^g$, if we add to $z$ a half-period $\frac{1}{2} ( \e + \tau \e')$, $\e, \e' \in (\ZZ/ 2\ZZ)^g$, we obtain:
  $$ \theta_{\mu} \left( z +  \frac{1}{2}  \tau \e', \tau \right) =    \sum _{p \in \ZZ^g}  {\bf e} \left(  \ ^t (p+a) \tau (p+a) + ^t (p+a) (2z + \tau \e') \right)=$$
  $$= {\bf e} \left( - \frac{1}{4} \ ^t \e' \tau \e' - ^t \e' z\right) \cdot  \sum _{p \in \ZZ^g}  {\bf e} \left(  \ ^t (p+ \frac{1}{2}   (\mu + \e')) \tau (p+\frac{1}{2}   (\mu + \e') ) + ^t (p + \frac{1}{2}   (\mu + \e')) (2z ) \right)=$$
  $$= {\bf e} \left( - \frac{1}{4} \ ^t \e' \tau \e' - ^t \e' z\right) \cdot  \theta_{\mu + \e' } (z , \tau).$$
  
  Whereas $$ \theta_{\mu} \left(z +  \frac{1}{2}   \e, \tau \right) =    \sum _{p \in \ZZ^g}  {\bf e} \left(  \ ^t (p+a) \tau (p+a) + ^t (p+a) (2z +  \e) \right)=$$
  $$ = {\bf e} \left( \frac{1}{2} \ ^t \e  \mu \right)  \theta_{\mu} (z , \tau)= \pm \theta_{\mu} (z , \tau).$$
  
  \begin{cor}\label{thetanull}
  The image points $P_{\eta}$ of the $2$-torsion points $\eta$ of $A$, which are the singular points of $K(A)$,
  are obtained from the point $P_0 = [ \theta_{\mu}(0, \tau)]$ (whose coordinates are called the Thetanullwerte),
  via the projective action of the groups  $(\ZZ/ 2\ZZ)^g$ acting by diagonal multiplication via $ {\bf e} ( \frac{1}{2} ^t \e  \mu)$,
  and $(\tau \ZZ^g / 2 \tau \ZZ^g)$, sending the point $[ \theta_{\mu  } (z , \tau)]$ to $[ \theta_{\mu + \e' } (z , \tau)]$.
  In particular, for $g=2$, letting $P_0 = [a_{00}, a_{10}, a_{01}, a_{11}]$, the sixteen nodes of $K(a)$
  are the orbit of $P_0$ for the action of the group $\sK \cong (\ZZ/2)^4$ which acts via double transpositions 
  and via multiplication with diagonal matrices having  entries in $\{\pm 1\}$ and    determinant $= 1$.
  
  \end{cor}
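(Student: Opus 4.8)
The plan is to read off the action of the Heisenberg group $H(G)$ on the canonical basis $\{\theta_\mu\}$ from the two transformation formulas established just above the statement, and then to specialise to $g=2$ to identify the group $\sK$ explicitly. First I would recall that the $2$-torsion subgroup $A[2]$ of $A$ is canonically $\frac12\Ga/\Ga \cong G \times G^\ast$, where $G = (\ZZ/2)^g$ (the ``$\e$'' part, coming from $\frac12\ZZ^g$) and $G^\ast = \tau\ZZ^g/2\tau\ZZ^g$ (the ``$\e'$'' part); every $2$-torsion point $\eta$ is thus uniquely $\frac12(\e+\tau\e')$ with $\e,\e'\in(\ZZ/2)^g$. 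The node $P_\eta$ of $K(A)$ is the image of $\eta$ under the map given by $z\mapsto[\theta_\mu(z,\tau)]_\mu$, so $P_\eta = [\theta_\mu(\tfrac12(\e+\tau\e'),\tau)]_\mu$.

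Next I would plug $z=0$ into the two displayed identities and compose them. The computation for $z+\frac12\tau\e'$ gives $\theta_\mu(\tfrac12\tau\e',\tau) = {\bf e}(-\tfrac14\,{}^t\e'\tau\e')\,\theta_{\mu+\e'}(0,\tau)$; note the scalar prefactor ${\bf e}(-\tfrac14\,{}^t\e'\tau\e')$ is \emph{independent of $\mu$}, hence irrelevant in $\PP^{2^g-1}$, so the effect on the projective point is simply the permutation $\mu\mapsto\mu+\e'$ of the coordinates. The computation for $z+\frac12\e$ gives $\theta_\mu(\tfrac12\e,\tau) = {\bf e}(\tfrac12\,{}^t\e\mu)\,\theta_\mu(0,\tau) = \pm\theta_\mu(0,\tau)$, i.e.\ a diagonal sign change $\theta_\mu\mapsto(-1)^{{}^t\e\mu}\theta_\mu$. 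Combining, $P_\eta$ is obtained from the Thetanull point $P_0=[\theta_\mu(0,\tau)]_\mu$ by first the diagonal sign operator attached to $\e$ and then the coordinate permutation attached to $\e'$; as $(\e,\e')$ ranges over $(\ZZ/2)^g\times(\ZZ/2)^g$ we get all sixteen (for $g=2$) nodes as a single orbit of the group $\sK$ generated by these two commuting families of operators, which is the projective image of the (level-$2$, finite) Heisenberg group and hence abelian of order $2^{2g}$.

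Finally, for $g=2$ I would make the identification with the concrete $\sK\cong(\ZZ/2)^4$ of the statement. Writing $\mu\in(\ZZ/2)^2$ as $00,10,01,11$ and ordering the coordinates $[a_{00},a_{10},a_{01},a_{11}]$, the three nonzero translations $\e'\in(\ZZ/2)^2$ act as the coordinate permutations $\mu\mapsto\mu+\e'$, which are precisely the three double transpositions of the four indices $00,10,01,11$ (e.g.\ $\e'=(1,0)$ swaps $00\leftrightarrow10$ and $01\leftrightarrow11$); these generate the $(\ZZ/2)^2$ of double transpositions. The three nonzero $\e\in(\ZZ/2)^2$ act by the diagonal sign patterns $\mathrm{diag}((-1)^{{}^t\e\mu})_\mu$, which have two entries $+1$ and two entries $-1$, hence determinant $+1$; these are exactly the non-identity elements of the group of $\pm1$-diagonal matrices of determinant $1$, another $(\ZZ/2)^2$. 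The two families commute (Heisenberg commutator in level $2$ is trivial), so together they generate $\sK\cong(\ZZ/2)^2\oplus(\ZZ/2)^2=(\ZZ/2)^4$, and $P_0$ has full $\sK$-orbit of size $16$ equal to the node set $\sN=\{P_\eta\}$, as claimed.

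The only point requiring care — and the ``main obstacle'', though it is a mild one — is bookkeeping: one must check that the $\mu$-independence of the prefactor ${\bf e}(-\tfrac14\,{}^t\e'\tau\e')$ really does hold (it does, since that term does not involve $p$ or $\mu$ once the sum has been re-indexed $p\mapsto p+\frac12(\mu+\e')$), so that the $\e'$-action descends to an honest permutation of projective coordinates rather than a mere monomial map; and symmetrically that the $\e$-action is genuinely a sign vector indexed by $\mu$ via the pairing ${}^t\e\mu\bmod 2$. Granting the two transformation formulas already derived in the text, everything else is the routine translation between $A[2]=\frac12\Ga/\Ga$ and the pair $(\e,\e')$, plus the elementary identification of the resulting two $(\ZZ/2)^2$'s inside $\mathrm{PGL}_4$ with double transpositions and determinant-one sign diagonals.
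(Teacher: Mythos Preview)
Your proposal is correct and follows exactly the approach the paper intends: the corollary is stated without proof precisely because it is meant to be read off from the two transformation formulas derived immediately above it, and you have carried out that reading-off carefully, including the explicit identification for $g=2$ of the two $(\ZZ/2)^2$-actions with double transpositions and determinant-one sign diagonals. One tiny quibble: the Heisenberg commutator at level $2$ is $\pm 1$, not literally trivial; what you mean (and what makes the argument work) is that it is a \emph{scalar}, hence trivial in $\mathrm{PGL}_4$, so the two families commute projectively and generate an abelian group $\sK\cong(\ZZ/2)^4$.
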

  \begin{rem}\label{thetanull}
  (I) Observe that the formulae  given by Weber on page 352 of  \cite{weber}   for the double points do
  not exhibit such a symmetry.
  \smallskip
  
  (II) Observe that the orbit of $P_0$ for the  subgroup of diagonal matrices having  entries in $\{\pm 1\}$ and    determinant $= 1$
  consists of $4$ linearly dependent points if $a_{00} a_{10} a_{01} a_{11} = 0$, that is, one of the Thetanullwerte
  vanishes  (this implies, as we shall see,
   that the Abelian surface $A$ is isogenous to a product of elliptic curves) .
  \end{rem}

  \section{Segre's construction and special Kummer quartics}
 
 Beniamino Segre \cite{bsegre} introduced a method in order to construct surfaces with many nodes: take polynomials of the form 
 $$ F(z_1,z_2,z_3,z_4) := \Phi  (z_1^2,z_2^2,z_3^2,z_4^2).$$

Since $\frac{\partial F}{\partial z_i} = 2 z_i \frac{\partial \Phi}{\partial y_i}$, the singular points are the
inverse images of the singular points of $ Y : = \{ y | \Phi (y) = 0\}$ not lying on the coordinate
tetrahedron $T : =  \{ y | y_1 y_2 y_3 y_4 = 0\}$, or of the points where $y_j = 0$ exactly for $j \in J$,
and $\frac{\partial F}{\partial y_i} =0$ for $ i \notin J$ (geometrically, these are the vertices of $T$ lying in $Y$, 
or the points where faces, respectively the edges of $T$ are tangent to $Y$.

The first interesting case is the one where $Y$ is a smooth quadric $Q$, and we shall assume that the four faces
are tangent in points which do not lie on any edge. Hence in this case $F=0$ defines a quartic $X$ with $16$ different singular points,
which are nodes.

The matrix $\sA$ of the dual quadric $Q^{\vee}$ has therefore diagonal entries equal to $0$,
and the condition that no edge is tangent is equivalent to the condition that all nondiagonal entries
are nonzero.

The branch locus of $ X \ra Q \cong (\PP^1 \times \PP^1)$ consists of $T \cap Q$, and in $\PP^1 \times \PP^1$
it gives $4$ horizontal plus $4$ vertical lines. Since we know that double transpositions do not change the cross-ratio
of $4$ points, we can assume that the quadric $Q$ is invariant for the Klein group $\sK'$ in $\PP^3$ generated 
by double transpositions. In short, we may assume that the matrix $\sA$ has the form

\[
		\sA : = \left( 
			\begin{array}{cccc}
				0 & b_2 & b_3 & b_4\\
				 b_2  &0  & b_4 & b_3\\
				 b_3  & b_4  & 0 & b_2 \\
				 b_4& b_3 &  b_2 & 0 \\
				
			\end{array}
		\right),
	\]

where $b_i \neq 0$ $\forall i$ and, using the $\CC^*$ - action  ($K^*$-action if we work more generally with an algebraically closed field $K$ of characteristic $\neq 2$), we may also assume that $b_2=1$.

\begin{rem}
Set $b_i = a_i^2$, $i=2,3,4$. Then the $16$ nodes of $X$ are the points whose coordinates
are the square roots of the coordinates of a column of $\sA$. 

That is, the $4$ points $[0, \pm a_2, \pm a_3, \pm a_4]$ and their orbit under the Klein group $\sK' : = (\ZZ/2)^2$.

We get a set $\sN$ such that, if we define $\sN'$ to be the set of planes orthogonal to some point of $\sN$,
then  $\sN, \sN'$
form a configuration of type $(16_6, 16_6)$.

 In fact,  for instance,  the plane $a_2 z_2 + a_3 z_3 + a_4 z_4$
contains exactly the $6$ points of $\sN$ where $z_j =0$ for some $j = 2,3,4$, $z_1 = a_j$,  
and, if $\{j,h,k\} = \{2,3,4\}$, then $z_k = \e a_h, z_h = - \e a_k$ where $\e = \pm 1$.
\end{rem} 

To calculate the equation of $X$ it suffices here to find the matrix of $Q$, which is, up to a multiple,
 the inverse of $\sA$. And since $Q$ is $\sK'$-symmetric, it suffices to compute its first column.
 
 The calculation of the matrix of $Q$ can be done by hand, since
 
 \begin{align*}
		 \sA (e_1 +  e_3) = b_3  (e_1 +  e_3) + (b_2 + b_4) (e_2 + e_4)\\
		\sA  (e_2 + e_4) = (b_2 + b_4)(e_1 +  e_3) + b_3  (e_2 + e_4)\\
		\sA (e_1 -  e_3) =  - b_3 (e_1 -  e_3) + (b_2 - b_4) (e_2 - e_4)\\
		\sA  (e_2 - e_4) = (b_2 - b_4) (e_1 -  e_3) - b_3 (e_2 - e_4)\\
	\end{align*}
and moreover the inverse of 
\[
		C_1 : = \left( 
			\begin{array}{cc}
				 b_3 & b_2 + b_4\\
				 b_2 + b_4& b_3  \\
				
			\end{array}
		\right),
	\]
is  $(b_3^2 - (b_2 + b_4)^2)^{-1} \cdot D_1$, where:
\[
		D_1 : = \left( 
			\begin{array}{cc}
				 b_3 & - (b_2 + b_4)\\
				 - (b_2 + b_4)& b_3  \\
				
			\end{array}
		\right),
	\]
  while the inverse of 
  \[
		C_2 : = \left( 
			\begin{array}{cc}
				 - b_3 & b_2 - b_4\\
				 b_2 - b_4& - b_3  \\
				
			\end{array}
		\right),
	\]
  equals 
    $(b_3^2 - (b_2 - b_4)^2)^{-1} \cdot D_2$, where:
\[
		D_2 : = \left( 
			\begin{array}{cc}
				 - b_3 & b_4 - b_2 \\
				 b_4 - b_2 & - b_3  \\
				
			\end{array}
		\right).
	\]
	
	Now, a straightforward calculation yields the
	following  coefficients for the first column of the matrix of $Q$:
\begin{align*}
		 \al_0 &= 2 b_2 b_3 b_4 \\
		\al_{01} &=  b_2 (b_2^2 - b_3^2 - b_4^2) \\
		\al_{10} &= b_3 ( b_3^2 - b_4^2 - b_2^2) \\
		\al_{11} &= b_4 ( b_4^2 - b_2^2 -  b_3^2 ) \\
	\end{align*}
	hence $X$ has the following equation in Hudson normal form 
$$  \al_0 (\sum_i z_i^4)  + 2  \al_{01} (z_1^2 z_2^2 + z_3^2 z_4^2) + 2 \al_{10} (z_1^2 z_3^2 + z_2^2 z_4^2) + 2 \al_{11} (z_1^2 z_4^2 + z_2^2 z_3^2) = 0.$$

Observe here  that the Cefal\'u quartic corresponds to the special case where $Q^{\vee}$ is $\s_2 (y) = 0$,
hence the case where  all $b_2,b_3, b_4 =1$, $$Q = \{ y | \s_2 (y) - s_2(y) = 0\}, \ X = \{ z| \s_2 (z_i^2) - s_4(z) = 0\}.$$

\bigskip

We want to explain now the geometrical meaning of the family of Segre-type Kummer quartics,
as Kummer quartics whose associated Abelian surface is isogenous to a product of two elliptic curves.

\begin{prop}\label{special}
The Segre-type (special) Kummer quartics are exactly the Kummer surfaces whose associated principally
polarized Abelian surface $A$ is isogenous to a product $A_1 \times A_2$ of elliptic curves,
$$ A = (A_1 \times A_2) / H, \ H \cong (\ZZ/2)^2,$$ and where $H \ra A_i$ is injective for $i=1,2$. 
\end{prop}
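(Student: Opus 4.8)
The plan is to reduce the statement to the vanishing of a single second‑order thetanull of $A$, and then to recognise such a vanishing as coming from an isogeny onto $A$ from a product of elliptic curves.

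\emph{Step 1 (Segre type $\Leftrightarrow$ a thetanull vanishes).} By Theorem~\ref{166} a Kummer quartic $K(A)$ is of Segre type — equivalently, its Hudson normal form has $\beta=0$, equivalently it is even in each $z_i$ — exactly when its set of sixteen nodes is the $\sK$‑orbit of a point one of whose homogeneous coordinates vanishes; this is the case "$b=0$" of that theorem. By the corollary expressing the sixteen nodes of $K(A)$ as the $\sK$‑orbit of the point $P_0=[\,\theta_\mu(0,\tau)\,]_{\mu\in(\ZZ/2)^2}$ of second‑order thetanullwerte, and since $\sK$ acts on $\PP^3$ only by permuting the coordinates and changing their signs, $K(A)$ is of Segre type if and only if $\theta_\mu(0,\tau)=0$ for some $\mu$; by condition~(I) of Theorem~\ref{166} at most one $\mu$ can then occur, and after an $Sp_4(\ZZ)$‑change of basis we may assume it is $\mu=(1,1)$.

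\emph{Step 2 (reading off the isogeny).} Write $\theta_\mu(0,\tau)=\theta[\tfrac12\mu,0](0,2\tau)$ and introduce the principally polarized abelian surface $B:=\CC^2/(\ZZ^2+2\tau\ZZ^2)$ with its standard theta divisor $\Theta_B$. The characteristic $[\tfrac12\mu,0]$ is even, so $\theta[\tfrac12\mu,0](\,\cdot\,,2\tau)$ is an even function; its divisor $\Theta_B-\tau\mu$ is therefore symmetric about the origin and vanishes there to even order. Hence $\theta_\mu(0,\tau)=0$ forces $\Theta_B$ to be singular at the two‑torsion point $\tau\mu$, which for an abelian surface means $\Theta_B$ is reducible, i.e.\ $(B,\Theta_B)$ is, up to translation, a polarized product $E_1\times E_2$ of elliptic curves, with the node of $\Theta_B$ the common point of its two components. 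The inclusion $\ZZ^2+2\tau\ZZ^2\subset\ZZ^2+\tau\ZZ^2$ yields an isogeny $B\to A$ of degree $4$ with kernel $H:=\tau\ZZ^2/2\tau\ZZ^2\cong(\ZZ/2)^2$, so $A=(E_1\times E_2)/H$.

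\emph{Step 3 (the kernel injects; converse).} To see that $H$ injects into each factor I would pass to Humbert's normal form. Under the standing assumption that $A$ is indecomposable, both "$\theta_\mu(0,\tau)=0$ for some $\mu$" and "$A=(A_1\times A_2)/H$ with $H\cong(\ZZ/2)^2$ injecting into each $A_i$" are equivalent to $A$ lying on the Humbert surface of discriminant $4$ (for the second condition, the image of $A_1\times\{0\}$ in $A$ is then an elliptic curve carrying a degree‑$2$ polarization, and conversely one invokes Humbert's lemma). After an $Sp_4(\ZZ)$‑change of symplectic basis — which changes neither $A$ nor the Segre‑type property of $K(A)$ — one may take $\tau=\left(\begin{smallmatrix}\tau_1&1/2\\1/2&\tau_2\end{smallmatrix}\right)$; a direct evaluation of the theta series then gives $\theta_{(1,1)}(0,\tau)={\bf e}(\tfrac14)\bigl(\sum_{p\in\ZZ}(-1)^p{\bf e}(\tau_1(p+\tfrac12)^2)\bigr)\bigl(\sum_{p\in\ZZ}(-1)^p{\bf e}(\tau_2(p+\tfrac12)^2)\bigr)=0$, each one‑variable factor being an odd Jacobi thetanull. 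Moreover the lattice identity $\ZZ^2+2\tau\ZZ^2=\ZZ(1,0)+\ZZ(0,1)+\ZZ(2\tau_1,0)+\ZZ(0,2\tau_2)$ exhibits $B=\CC/(\ZZ+2\tau_1\ZZ)\times\CC/(\ZZ+2\tau_2\ZZ)$ as a product and presents $H$ as generated by the classes of $(\tau_1,\tfrac12)$ and $(\tfrac12,\tau_2)$, all three nonzero elements of $H$ having both coordinates non‑integral modulo $\ZZ+2\tau_i\ZZ$; hence $H\to E_i$ is injective for $i=1,2$. This establishes the implication "Segre $\Rightarrow$ product", and running Step~1 backwards from $\theta_{(1,1)}(0,\tau)=0$ establishes the converse once the normal form has been reached.

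The step I expect to be the main obstacle is the bookkeeping in Step~3: identifying precisely which theta characteristic vanishes, checking that $Sp_4(\ZZ)$ acts transitively enough on the relevant data to reach the single normal form $\tau_{12}=\tfrac12$, and — most delicately — verifying that "$H$ injects into both factors" is genuinely equivalent to, rather than merely implied by, $A$ being isogenous to a product via a $(\ZZ/2)^2$‑isogeny; equivalently, that for $A$ indecomposable the node of $\Theta_B$ can always be moved onto one of the four "$b=0$" two‑torsion points $\tau\mu$ of $B$. (A theta‑free alternative would be to observe that $K(A)$ is of Segre type if and only if it is invariant under the whole group $\{[z]\mapsto[\pm z]\}\cong(\ZZ/2)^3$ of coordinate sign changes, which strictly enlarges the Heisenberg group $\sK=A[2]$ inside $\operatorname{PGL}_4$; the extra involution corresponds to a Lagrangian $L\subset A[2]$ for which $A/L$ is a polarized product of elliptic curves, and unwinding this yields the same conclusion — but the thetanull route is the most economical.)
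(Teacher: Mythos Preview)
Your argument is sound and the thetanull computation in Step~3 checks out, but the route is genuinely different from the paper's. The paper argues purely geometrically: since $Q\cong\PP^1\times\PP^1$ and each tangent plane section factors as $x_i=L_i(u)M_i(v)$, the $(\ZZ/2)^3$-cover $X\to Q$ splits the $z_i$ as $z_i=\lambda_i\mu_i$ with $\lambda_i^2=L_i$, $\mu_i^2=M_i$; one then \emph{defines} $A_1,A_2$ as the $(\ZZ/2)^3$-covers of the two $\PP^1$ factors branched on the four points $L_i=0$ (resp.\ $M_i=0$), observes these are genus-$1$ curves on which the Galois group acts as $A_i[2]\times\{\pm1\}$, and reads off $A=(A_1\times A_2)/H$ with $H$ the graph of an isomorphism $A_1[2]\to A_2[2]$ --- so the injectivity of $H\to A_i$ is immediate rather than requiring a Humbert normal form. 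Your approach, by contrast, passes through the transcendental identification ``Segre type $\Leftrightarrow$ some $\theta_\mu(0,\tau)=0$'' and then interprets this as reducibility of the theta divisor on the $2$-isogenous surface $B$; this buys you the modular interpretation (the locus is the Humbert surface of discriminant~$4$) and connects cleanly with the rest of the paper's theta-function machinery, but it is tied to $\CC$ and imports more theory. The paper's argument is more elementary, is valid over any algebraically closed field of characteristic $\neq 2$, and makes the elliptic curves visible directly as covers of the two rulings of~$Q$ --- though, like yours, it treats the converse direction rather implicitly. Your parenthetical ``theta-free alternative'' via the extra sign-change symmetry is actually closer in spirit to what the paper does.
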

\begin{proof}
Choose 
an isomorphism of the quadric $Q$ with $\PP^1 \times \PP^1$, with coordinates $(u_0, u_1), (v_0,v_1)$.

Then the linear forms $x_i$, since the plane section $\{ x_i = 0\}$ is tangent to $Q$, splits as $ x_i = L_i(u) M_i (v)$.

The Galois cover $ X \ra Q$ with group $(\ZZ/2)^3$ yields sections $z_i $ such that $z_i^2 = x_i = L_i(u) M_i (v)$,
hence the divisor of $z_i$ is reducible, and we can write $ z_i = \la_i \mu_i$, where $ \la_i^2 = L_i(u)$ and
$\mu_i^2 = M_i (v)$.

We define $A_1$ as the $(\ZZ/2)^3$-Galois cover of $\PP^1$ branched on $L_1 L_2 L_3 L_4 (u) = 0$,
and similarly $A_2$ as the $(\ZZ/2)^3$-Galois cover of $\PP^1$ branched on $M_1 M_2 M_3 M_4 (v) = 0$.

The sections $\la_i, \mu_i$ are defined on $A_1 \times A_2$, hence $X$ is the quotient of $A_1 \times A_2$
by the action of $(\ZZ/2)^3$ diagonally embedded in $(\ZZ/2)^3 \oplus (\ZZ/2)^3$.

Clearly $A_i \ra \PP^1$ is the quotient by the group of automorphisms of the form $z \mapsto \pm z + \eta$,
with $\eta \in A_i [2]$ (a 2-torsion point). Hence $A =  (A_1 \times A_2) / H$, where $ H \subset A_1 [2] \times A_2 [2]$
is diagonally embedded for some isomorphism between $A_1 [2] $ and $ A_2 [2]$.

The embedding of $X$ into $\PP^3$ is given by the sections $z_i$, which on $A_1 \times A_2$ can be written
as $ z_i = \la_i \mu_i$, hence they are sections of a line bundle of bidegree $(4,4)$.
The principal polarization then corresponds to a line bundle of bidegree $(2,2)$.

$A$ can also be viewed as an \'etale $(\ZZ/2)^2$-Galois covering of $$A_1 \times A_2
\cong (A_1 / A_1 [2] ) \times (A_2 / A_2 [2]).$$

\end{proof}

\begin{rem}
We can now prove ix) of theorem \ref{Cefalu'}.

Choosing an isomorphism of the smooth quadric $Q$ with $\PP^1 \times \PP^1$, the $\mathfrak S_4$ action shows that the union of the $4$ vertical lines and the $4$ horizontal lines are 
$\mathfrak S_4$-invariant. Hence the $4$  first coordinates are fixed by a group of order $12$, and they form an equianharmonic
cross-ratio. Therefore $A_1$ admits such group of order $12$ permuting the
set of torsion points $A_1[2]$, hence $A_1$  is the equianharmonic = Fermat elliptic curve. Same argument for $A_2$.
\end{rem}
\subsection{Proof of Theorem \ref{real}}

\begin{proof}
Take a Segre-type Kummer quartic corresponding to the choice of $a_2, a_3, a_4 \in \RR$,
so that $X$ is defined over $\RR$ and its singular points are real points.

Observe then  that $b_i > 0$, and that the determinant of $\sA$ is a polynomial in $b_4$ with leading term $b_4^4$.

Hence, for $ b _4 > > 0$, $det (\sA) >0$, as well as the determinant of the inverse. Since $\sA$ is not definite,
its signature is of type $(2,2)$, the same holds for $Q$, so that $Q$ is of hyperbolic type and we may choose 
an isomorphism of the quadric $Q$ with $\PP^1 \times \PP^1$, with coordinates $(u_0, u_1), (v_0,v_1)$,
defined over $\RR$. And to the real singular points correspond real points in $\PP^1 \times \PP^1$.

We have seen that the  Galois cover $ X \ra Q$ with group $(\ZZ/2)^3$ yields sections $z_i $ such that $z_i^2 = x_i = L_i(u) M_i (v)$,
 the divisor of $z_i$ is reducible,  $ z_i = \la_i \mu_i$, where $ \la_i^2 = L_i(u)$ and
$\mu_i^2 = M_i (v)$.
Since $z_i$ is real, and the two terms of its irreducible decomposition are not exchanged by complex conjugation, 
 it follows
that also $\la_i, \mu_i$ are real.

$X$ is embedded in $\PP^3$ by the divisor $H_X$ whose double is the pull back of the hyperplane divisor $H_Q$ of $Q$.

$ Q \cong \PP^1 \times \PP^1$ is embedded in $\PP^3$ by the homogeneous polynomials of bidegree $(1,1)$,
in other words the divisor $H_Q = H_1 + H_2$, where the two effective divisors $H_1, H_2$ are the respective pull backs of the
divisors of degree $1$ on $\PP^1$. Pulling back to $X$, we see that $H_X =  H'_1 +  H'_2$, as we argued in the proof of proposition
\ref{special}.
The previous observation on the reality of the $\la_i, \mu_i$'s shows that both $H'_1, H'_2$ are real.

$Q$ can be embedded in $\PP^{2(m+1)-1}$ by the homogeneous polynomials of bidegree $(1,m)$, as a
variety of degree $2m$. This embedding is given by a vector of rational functions  $\Psi (y_0, y_1, y_2, y_3)$,
ad corresponds to the linear system $| H_1 + m H_2|$.

$X$ is a finite cover of $Q$, hence the pull back of the hyperplane $H_m$ in $\PP^{2(m+1)-1}$
yields the double of a polarization of degree $ 4m$, namely the divisor $(H'_1 + m H'_2)$.
 Hence we have shown that $H'_1 + m H'_2$ is an ample divisor.

To show that $(H'_1 + m H'_2)$ is very ample we first observe that since $|H'_1 + m H'_2| \supset |H'_1 +  H'_2| + (m-1)H'_2$
the linear system yields a birational map. Then we  can invoke for instance  theorem 6.1 of Saint Donat 's article \cite{sd}
on linear systems on K3 surfaces.

The image of $X$ under the linear system $|H'_1 + m H'_2|$, which is associated to a real divisor,  is the desired surface $X'$ for $ d = 4m$, and all the nodes are defined over $\RR$, as well as $X'$.

Projecting $X'$ from one node, and using the theorem of Saint Donat \cite{sd}, as in \cite{nodalsurfaces}, we get the desired surface $X''$
for degree $d = 4m -2$.

The proof for the case where $\RR$ is replaced by an algebraically closed field of characteristic $\neq2$ is
identical, we can  find $a_2, a_3, a_4$ such that $ det (\sA) \neq 0$ because the determinant is a polynomial in the 
$ a_i$ 's 
with leading term $a_4^8$.

\end{proof}

\section{Proof of Theorem \ref{Cefalu'}}
 In this section we want to show how most of the statements, even if proven in other sections of the paper in greater generality, 
 can be proven by direct and elementary computation.
 
 First of all, we observe the necessity of the condition $char(K)\neq 2,3$: because in both  cases the equation $F$ is a square.
 In the other direction, the determinant of the quadric dual to $Q$, which is $\{\s_2(x)=0\} $, equals $-3$, hence $Q$ is smooth if the characteristic is $\neq 2,3$.
 
 ii) ,  after the above remarks, is a straighforward computation (done in the section on special Kummer quartics in greater generality)  based,   if $F (z) = Q(z_i^2)$, 
 on the formula $\frac{\partial F}{\partial z_i} =  2 z_i \frac{\partial Q}{\partial x_i}(z_i^2)$ which implies that some $z_i = 0$.
 
 iii) is based on the fact that $X$ admits $G$-symmetry, and the plane $H_1 := \{z_2 + z_3 + z_4 = 0\}$ intersects $X$
 in an irreducible conic $C_1 : = \{ -z_1 ^2 + z_2^2 + z_3^2 + z_2 z_3 =0 \}$ counted with multiplicity $2$. 
 
This can also be found by direct computation, however
 it follows since $H_1 \cap X$ contains $6$ singular points, no three of which are collinear,
 hence this intersection yields a conic counted with multiplicity $2$.
 
 Therefore we have $16$ curves mapping to $16$ singular points of the dual, which are exactly the same set $\sN$ of nodes of $X$. Since $X^{\vee}$ is also $G$-symmetric, it must equal $X$.
 
 iv) is straightforward using $G$-symmetry.
 
 v) $\ga$ has order $2$ since the Gauss map of $X^{\vee}$ is the inverse of the Gauss map of $X$,
 and $\ga$ is a morphism  on the minimal model $S$ of $X$.
 
 If $A$ is the matrix associated to a projective automorphism of $X$, and we treat $\ga$ as a rational self map of $X$,
$ ^t A \ga A = \ga$, hence $\ga A \ga = ^t A^{-1}$, hence $\ga$ commutes with any orthogonal transformation $A$, 
in particular  $\ga$ commutes with $G$.

 vii) since $\ga$ blows up the point $P$ to the curve $C_P$ which is the set theoretical intersection $H_P \cap X$, where $H_P$ is the orthogonal
 plane to $P$, and $P$ does not belong to $H$, follows that any fixed point of $\ga$ should correspond to a smooth point of $X$
 not lying in any hyperplane $H_P$.
 
 A fixed point would satisfy $[z_i] = [z_i \frac{\partial Q}{\partial x_i}(z_i^2)]$. If all $z_i \neq 0$, then we would have that the point $[1,1,1,1]$ lies in the dual quadric of $Q$, a contradiction since  this is the quadric $\s_2 (x)=0$. 
 
 No coordinate point lies on $X$, hence exactly one or two coordinates $z_i$ must be equal to $0$.
 In the former case we may assume by symmetry that $z_4 =0$, and $z_i \neq 0$ for $i=1,2,3$.
 
 This leads to the conclusion that $(1,1,1)$ must be in the image of the matrix
 
 \[
		 \left( 
			\begin{array}{ccc}
				-2 & 1& 1 \\
				1 & -2& 1 \\
				1 & 1& -2 \\

			\end{array}
		\right),
	\]

 absurd since the sum of the rows equals zero.
 
 If instead $z_3 = z_4 = 0$, $z_1 , z_2 \neq 0$, then we get $x_3 = x_4 = 0, x_1, x_2 \neq 0$.
 
 Here must be that   $(1,1)$ must be in the image of the matrix
 
 \[
		 \left( 
			\begin{array}{cc}
				-2 & 1 \\
				1 & -2 \\

			\end{array}
		\right),
	\]

 hence that $ - 2 x_1 + x_2 = x_1 - 2 x_2 \Leftrightarrow x_1 = x_2 =1$.
 
But $[1,1,0,0]$ is not a point of the quadric $Q$. 
 
 ix) was proven in the section on special Kummer quartics, while x) follows from the calculations we already made.

 We turn now to the assertions i), vi), viii) concerning automorphisms.
 
 i): that $G$ is contained in the group $\sG$ of projective automorphisms of $X$ is obvious, the question is
 whether equality $G = \sG$ holds.
 \begin{rem}
$\sG$ induces  a permutation of  the singular set $\sN$, and of  the set $\sN'$ of the $16$ tropes, the planes dual to the points
 of $\sN$. 
 Given a projectivity $h : X \ra X$, acting with the subgroup $G$ we can assume
 that it fixes the point $P_1 = [1,1,1,0]$ and since the stabilizer of $P_1$ in $G$ acts transitively on the $6$ tropes
 passing through $P_1$,  we may further assume that $h$ stabilizes the plane $\pi : \{ z_1 - z_2 + z_4 = 0\}$.
 
 To determine these automorphisms $h$ (among these  an element of $G$ of order $2$), we need more computations,
 so we shall finish the proof of i) at the end of the section.  
 \end{rem}
 
vi) Observe  that since $\ga$ acts freely on $S$, the quotient $Z : = S / \ga$ is an Enriques surface, therefore $\ga$
  acts as multiplication by $-1$ on a nowhere vanishing regular $2$-form $\omega$. The action of $G$ on  $\omega$
  factors through the Abelianization of $G$, which is the Abelianization $\cong \ZZ/2$ of $\mathfrak S_4$,
  since in the semidirect product, for $\e \in  (\ZZ / 2)^3, \sigma \in \mathfrak S_4$, $\s \e \s^{-1} = \s (\e)$,
  and there are no $\mathfrak S_4$-coinvariants.
  
  The conclusion is that for each $g \in G$ there is a unique element in the coset $ g \{ 1, \ga\}$ which acts trivially on $\omega$,
  hence $G^s \cong G$.
  
   The last assertion is proven in Proposition \ref{fermat}.
  
  viii) The group of birational automorphisms of $X$ coincides with the group of biregular automorphisms of $S$.
  Among the former are the involutions determined by projection from a node $P$. Since however the group $G$
  acts transitively on the set $\sN$ of nodes, two such are conjugate by an element of $G$, hence it suffices to consider just one such involution $\iota$.

  We shall show that $\iota$ does not lie in the group generated  by
  $\ga$ and the projectivities just   looking at its action on the Picard group.
  
  Consider $P_1 = [1,1,1,0]$, so that projection from $P_1$ is given by $w_2 : = z_2 - z_1 , w_3 := z_1 - z_3   , w_4 = z_4$.
  
  We can write the equation $F(z)$ as
  $$ z_1^2 \phi(w) + 2 z_1 \psi (w) + f (w), $$
  where 
  
$$ \phi(w) = 4 (w_2 - w_3)^2 + 6 s_2 (w) - 18 (w_2^2 + w_3^2)= -2 (2 w_2^2 + 2 w_3^2 + 2  (w_2 + w_3)^2 -3w_4^2) $$
		$$ \psi(w) = 2 (w_2 - w_3) s_2 (w) - 6 ( w_2^3 + w_3^3) $$
		$$ f(w) = s_2(w)^4 - 3 s_4(w) ,$$

hence the branch curve (the union of the $6$ lines corresponding to the tropes through $P_1$) has equation
$$ \psi (w)^2 - \phi(w) f(w) =  (w_4 ^2 -  w_2 ^2) (w_4 ^2 -  w_3 ^2)(w_4 ^2 -  (w_2 + w_3) ^2) =$$
$$= (w_4 ^2 -  w_2 ^2) (w_4 ^2 -  w_3 ^2)(w_4 ^2 -  w_1 ^2)= 0$$
where we define $w_1$ so that $w_1 + w_2 + w_3 = 0$.

And
$$ \iota (w_2, w_3, w_4, z_1) = ( \phi (w) z_1 w_2, \phi (w) z_1 w_3, \phi (w) z_1 w_4, f(w)) =$$
$$ = ( \phi (w)  w_2, \phi (w)  w_3, \phi (w)  w_4, - z_1 \phi(w) - 2 \psi(w)).$$
The inverse image of the conic $\{ \phi(w) =0\}$, which is everywhere tangent to the branch locus, 
 splits in $S$ as the exceptional curve $E_1$ plus
a curve $C_1 : = \{ \phi(w) = 0, \ 2 z_1 \psi (w) + f(w) = 0\}$ such that $\iota (C_1)= E_1 $.

 Since the projection is given by the system $ 2 H - 2 E_1$, and $C_1 + E_1$
belongs to this system, the class of $C_1$ equals $2 H - 3E_1$. 

Then $\iota (H) = 3 H - 4 E_1, \iota (E_1) = 2H - 3E_1$.

On the other hand, each projectivity and $\ga$ leave invariant the two blocks given by $\{ E_i | P_i \in \sN\}$,
and $\{ D_i | P_i \in \sN\}$,  so the group generated by $G, \ga, \iota$ is larger than the group $G \times \ZZ/2$
generated by $G, \ga$.

\bigskip

We show now that the group generated by $G, \iota$ is infinite. Indeed, this result holds for all Kummer quartics,
namely we have the following

\begin{theo}\label{infinite}
Let $X$ be a Kummer quartic surface, $S$ its minimal resolution, and assume  that $char(K) \neq 2 $. Then $Aut(S)$ is infinite, and actually the subgroup
generated by the group $G$ of projective automorphisms of $X$, and by an involution $\iota$ obtained via
 projection from one node, is infinite.
\end{theo}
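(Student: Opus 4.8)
The plan is to produce a single automorphism in $\Gamma:=\langle G,\iota\rangle$ whose induced action on a rank-$3$ sublattice of $\Pic(S)$ has infinite order; since $\Aut(S)$ acts on $\Pic(S)$ by isometries and $\Gamma\le\Aut(S)$, this already forces $|\Gamma|=|\Aut(S)|=\infty$. I would begin by recalling the classical picture of the projection $\pi$ from the node $P_1$: resolving its unique indeterminacy point turns $\pi$ into a finite degree-$2$ morphism $\phi\colon S\to\PP^2$, whose deck transformation is exactly $\iota=\iota_{P_1}$ (so in particular $\iota$ is biregular on $S$), and whose branch curve is the union of the six lines $\ell_1,\dots,\ell_6$ that are the images of the six tropes through $P_1$. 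Since two nodes of $X$ lie on exactly two common tropes (nondegeneracy of the $(16_6,16_6)$ configuration, Theorem~\ref{166}), the $\ell_k$ are in general position, their $\binom{6}{2}=15$ double points are the images $\bar P_2,\dots,\bar P_{16}$ of the remaining fifteen nodes of $X$, and for $j\ge 2$ the exceptional $(-2)$-curve $E_j$ lying over $P_j$ is exactly the curve of $S$ lying over the double point $\bar P_j$ of the line arrangement.

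From this I would read off the action of $\iota$ on $L:=\langle H,E_1,\dots,E_{16}\rangle\subset\Pic(S)$. The pull-back of a line, $\phi^{*}\hol_{\PP^2}(1)=H-E_1$, is $\iota$-invariant, and, exactly as in the computation already carried out above for the Cefal\'u quartic (where $E_1$ maps isomorphically onto the conic everywhere tangent to the branch sextic, and the same reasoning applies to an arbitrary Kummer quartic), one obtains
$$\iota(H)=3H-4E_1,\qquad \iota(E_1)=2H-3E_1,$$
while $\iota(E_j)=E_j$ for $2\le j\le 16$: indeed $\phi\circ\iota=\phi$, so $\iota$ maps each fibre $\phi^{-1}(\bar P_j)=E_j$ to itself. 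That these formulas define an isometry of $L$ is immediate from $H^2=4$, $E_i^2=-2$, $H\cdot E_i=0$.

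Next I would use the subgroup $\sK\cong(\ZZ/2)^4\subset G$ of Theorem~\ref{166}, which permutes the sixteen nodes as the regular representation of $(\ZZ/2)^4$ (translations by $2$-torsion on the associated Kummer surface); choose $g\in\sK$ with $g(P_1)=P_2$. Then $g^2=\id$, $g$ interchanges $P_1$ and $P_2$ (hence $g(E_1)=E_2$, $g(E_2)=E_1$) and permutes $\{E_3,\dots,E_{16}\}$. Set $h:=\iota\circ g\in\Gamma$; note that $h^2$ equals the composite $\iota_{P_1}\circ\iota_{P_2}$ of the two node-projection involutions. Combining the formulas above, $h$ preserves the nondegenerate rank-$3$ sublattice $V:=\langle H,E_1,E_2\rangle$ (Gram matrix $\diag(4,-2,-2)$) and acts on it by $h(H)=3H-4E_1$, $h(E_1)=E_2$, $h(E_2)=2H-3E_1$. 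One computes that the characteristic polynomial of $h|_V$ is $(\lambda-1)^3$, while $h|_V\ne\id$ (e.g.\ $h(E_1)=E_2$); hence $h|_V$ is a nontrivial unipotent element of $\GL(V)\cong\GL_3(\ZZ)$, and a nontrivial unipotent has infinite order. Therefore $h$, and with it $\Gamma=\langle G,\iota\rangle$ and $\Aut(S)$, is infinite.

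The step carrying the real content is the second one: pinning down the action of $\iota$ on $\Pic(S)$, and in particular the fact that $\iota$ fixes every $E_j$ with $j\ne 1$. This rests on the classical description of a Kummer quartic projected from one of its nodes as a double plane whose branch sextic degenerates into six lines, which is precisely where the hypothesis $\mathrm{char}(K)\ne 2$ is needed; granted that, verifying by a direct trace/determinant/minor computation that the characteristic polynomial of the $3\times 3$ matrix $h|_V$ is $(\lambda-1)^3$ is routine.
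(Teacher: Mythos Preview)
Your argument is essentially the paper's own: produce an element of $\langle G,\iota\rangle$ whose action on the rank-$3$ sublattice $\langle H,E_1,E_2\rangle$ is a nontrivial unipotent, via the formulas $\iota(H)=3H-4E_1$, $\iota(E_1)=2H-3E_1$, $\iota(E_j)=E_j$ for $j\ge2$, together with a $g\in G$ swapping $E_1$ and $E_2$. The only cosmetic differences are that the paper composes in the other order ($g\circ\iota$ rather than $\iota\circ g$; the two are conjugate by $g$), and the paper justifies $\iota(E_j)=E_j$ by the remark that the line $\overline{P_1P_j}$ meets $X$ only in $P_1,P_2$, whereas you argue via $\phi\circ\iota=\phi$. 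One small slip: the map $\phi\colon S\to\PP^2$ is not finite (it contracts $E_2,\dots,E_{16}$); but this is exactly what makes your fibre argument for $\iota(E_j)=E_j$ work, so the conclusion is unaffected.
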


\begin{proof}

Observe in fact that $\iota$ leaves the other nodes fixed, because, if $P_2$ is another node, then the line $\overline{P_1 P_2}$
intersects $X$ only in $P_1, P_2$.

 Hence $\iota$  acts on the Picard group as follows:
$$\iota (H) = 3 H - 4 E_1, \iota (E_1) = 2H - 3E_1, \iota (E_j) = E_j , \ j \neq 1.$$

Let $g \in G$ be a projectivity such that $g(P_1) = P_2, g(P_2) = P_1$. 

Then $\phi: =  g \circ \iota$ acts on the Picard group 
 like this:
$$ \phi (H ) =  3 H - 4 E_2, \  \phi (E_1 ) =   2 H - 3 E_2 , \  \phi (E_2 ) =   E_1,$$
and leaves the subgroup generated by the $E_j$, for $ j \neq 1,2$ invariant.

The subgroup generated by $H, E_1, E_2$ is also invariant and on it $\phi$ acts with the matrix:

 \[
	M : = 	 \left( 
			\begin{array}{ccc}
				3 & 2 & 0\\
				0 & 0 &1 \\
				-4 & -3& 0  \\

			\end{array}
		\right).
	\]
An easy calculation shows that the characteristic polynomial of $M$ is 
$$P_M(\la) = (\la-1)^3,$$
moreover $rank (M- Id) = 2$, therefore the Jordan normal form of $M$ is a single $3 \times 3$ Jordan block with
eigenvalue $1$, and it follows
that $M$ and  $\phi$ have infinite order.

\smallskip
\end{proof}

The proof of viii) is then concluded.

\bigskip

{\bf End of the proof of i).}

Giving an automorphism of $X$ fixing $P_1$ and the plane $z_4 - z_2 + z_1 = 0$, which projects to the line $w_4 - w_2 = 0$,
is equivalent to giving a projectivity of the plane with coordinates $(w_2, w_3,w_4)$ leaving invariant  the line 
$w_4 - w_2 = 0$, and permuting the other $5$ lines of the branch locus. Since all these lines are tangent to the 
conic $\sC : = \{\phi(w) = 0\}$, it is equivalent to give an automorphism of $\sC$ fixing the intersection point
$P' : = \sC \cap \{ w_4 - w_2 = 0\}$, and permuting the other $5$ points.

Projecting $\sC$ with centre $P'$, we see that it is equivalent to  require an affine automorphism permuting these $5$ points.

We note that one such is the permutation $$\tau: w_2 \mapsto w_2, w_4 \mapsto w_4,   w_3 \mapsto w_1 = -w_2 - w_3.$$

So, we calculate: 
$$ P' = (-2,1,-2) \in \{ (-2,1 , \pm 2), (1,1, \pm 2), (1, -2, \pm 2)\}.$$

The projection with centre $P'$ is given by $(w_4 - w_2, w_4 + 2 w_3)$, maps $P'$ to $\infty$,
and the other $5$ points to 
$$ \{ 1,4,0,-2, 2\}.$$
This set is affinely equivalent to 
$$ \sM : =  \{ -3, -1, 0, 1, 3 \},$$
and the above permutation $\tau$ corresponds to multiplication by $-1$.

If $char(K) = 5$, then $\sM = \ZZ/5 = : \FF_5 \subset K$, and
obviously we have then the whole affine group $Aff (1, \FF_5)$, with cardinality $20$, acting on $\sM$.

If instead $char(K) \neq  5$, we observe that the barycentre of $\sM$ is the origin $0$. Assume now that $\al$ is
an affine isomorphism leaving $\sM$ invariant. Then necessarily $\al$ fixes the barycentre, hence $\al$
is a linear map, and $0 \in \sM$ is a fixed point. Hence $\al$ is either of order $4$ or of order $2$,
and we must exclude the first case. But this is easy, since then $\al (1) \in \{-1, 3, -3\}$ should be a square root of
$-1$, and this is only possible (recall here that $2,3 \neq 0$) if $5=0$, a contradiction which ends the proof.

\begin{question}
   Is there a larger finite subgroup of $Aut(S)$ ($S$ the Cefal\'u K3 surface, the minimal model of the Cefal\'u quartic) containing the group isomorphic to $ G \times( \ZZ/2)$
  which is generated by $G, \ga$?
\end{question}

In order to see whether  there are   larger subgroups than $G^s \cong G$ acting symplectically on the Cefal\'u K3 surface, we use first the following:

\begin{lemma}\label{sym-alt}
The group $G$ cannot be a subgroup of the group $AL(2, \FF_4)$ of special affine transformations of
the plane $(\FF_4)^2$, which is a semidirect product 
$$AL(2, \FF_4) \cong (\FF_4)^2 \rtimes \mathfrak A_5 \cong (\ZZ/2)^4 \rtimes \mathfrak A_5 .$$
\end{lemma}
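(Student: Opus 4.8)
The plan is to argue by contradiction, using nothing more than the orders of the two groups, the description of $AL(2,\FF_4)$ as an extension of $\mathfrak A_5$ by its natural module, and the abelianization of $G$ recorded in the proof of statement (vi) of Theorem~\ref{Cefalu'}.

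First I would suppose $G\le AL(2,\FF_4)$. Since $|G|=8\cdot 24=192=2^{6}\cdot 3$ and $|AL(2,\FF_4)|=16\cdot 60=960=2^{6}\cdot 3\cdot 5$, such an inclusion would have index $5$. Let $V\cong(\ZZ/2)^{4}$ be the normal translation subgroup of $AL(2,\FF_4)$, so that $AL(2,\FF_4)/V\cong\mathfrak A_5\cong SL(2,\FF_4)$, and set $W:=G\cap V$. As $W\le V$ is elementary abelian, $|W|=2^{k}$ with $0\le k\le 4$. Because $V$ is normal, $GV$ is a subgroup of $AL(2,\FF_4)$ with $|GV|=|G|\,|V|/|W|=2^{10-k}\cdot 3$; since $|GV|$ must divide $2^{6}\cdot 3\cdot 5$, its $2$-part $2^{10-k}$ divides $2^{6}$, which forces $k=4$, i.e. $V\subseteq G$.

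Next I would look at the quotient $G/V\cong GV/V\le AL(2,\FF_4)/V\cong\mathfrak A_5$, which has order $192/16=12$. A subgroup of order $12$ of $\mathfrak A_5$ has index $5$ and is isomorphic to $\mathfrak A_4$: indeed $\mathfrak A_5$ has no elements of order $4$ or $6$, so such a subgroup has no element of order $4$ or $6$, and $\mathfrak A_4$ is the unique group of order $12$ whose non-trivial elements all have order $2$ or $3$. Hence $G$ surjects onto $\mathfrak A_4$, and therefore onto $\mathfrak A_4^{\mathrm{ab}}\cong\ZZ/3$. But in the proof of (vi) of Theorem~\ref{Cefalu'} it is shown that the action of $\mathfrak S_4$ on $(\ZZ/2)^{3}$ inside $G=(\ZZ/2)^{3}\rtimes\mathfrak S_4$ has no coinvariants, whence $(\ZZ/2)^{3}\subseteq[G,G]$ and $G^{\mathrm{ab}}\cong\mathfrak S_4^{\mathrm{ab}}\cong\ZZ/2$. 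A group with abelianization $\ZZ/2$ admits no surjection onto $\ZZ/3$, a contradiction which proves the lemma.

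The only genuinely delicate point in this argument is recognizing the normal translation subgroup $V$ and the identification $AL(2,\FF_4)/V\cong\mathfrak A_5$; once this structural fact is in hand, everything reduces to the divisibility constraint on $|GV|$ and the elementary classification of groups of order $12$. In particular no use of Mukai's list of symplectic groups is required, and the same argument would handle any group $G$ of order $192$ with $G^{\mathrm{ab}}$ of order $2$.
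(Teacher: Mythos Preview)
Your proof is correct and takes a genuinely different route from the paper's. The paper argues via $2$-Sylow subgroups: since an embedding $G\hookrightarrow\Gamma:=AL(2,\FF_4)$ would have index $5$, coprime to $2$, a $2$-Sylow of $G$ would already be a $2$-Sylow of $\Gamma$; the paper then computes the abelianizations of these Sylow subgroups explicitly and finds $H_G^{ab}\cong(\ZZ/2)^3$ versus $H_\Gamma^{ab}\cong(\ZZ/2)^4$, a contradiction. Your approach instead exploits the normal structure of $\Gamma$ directly: a divisibility argument on $|GV|$ forces the translation subgroup $V$ to lie inside $G$, whence $G$ surjects onto an index-$5$ subgroup of $\mathfrak A_5$, necessarily $\mathfrak A_4$, and hence onto $\ZZ/3$—impossible since $G^{ab}$ is a $2$-group. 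Both arguments are elementary, but yours avoids the Sylow computation entirely and is arguably more conceptual; the paper's argument, on the other hand, distinguishes the two groups by an invariant (the Sylow abelianization) that does not depend on identifying a specific normal subgroup of $\Gamma$.

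One small caveat: the assertion $G^{ab}\cong\ZZ/2$, which you cite from the proof of (vi), is not quite accurate. The $\mathfrak S_4$-module $(\ZZ/2)^3$ in question is $(\ZZ/2)^4/\Delta$, and its coinvariants are $\ZZ/2$ rather than trivial (one checks that the augmentation submodule has order $4$), so in fact $G^{ab}\cong(\ZZ/2)^2$. This does not affect your argument at all: what you actually need is that $G^{ab}$ is a $2$-group, and that follows immediately from the extension $1\to(\ZZ/2)^3\to G\to\mathfrak S_4\to 1$ since both kernel and quotient have $2$-power abelianization. You may wish to phrase the final step this way to make the proof self-contained.
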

\begin{proof}
Since the order of $G \cong (\ZZ/2)^3 \rtimes \mathfrak S_4$ is $192$, and the order of $ \Ga : = AL(2, \FF_4)$
is $ 920= 5 \cdot 192$,  the existence of  a monomorphism $ G \ra \Ga$ would imply that  the two groups  have
isomorphic $2$- Sylow subgroups $H_G, H_{\Ga}$. But we claim   now  that for the respective
Abelianizations we have
$$ H_G^{ab}  \cong (\ZZ/2)^3, \ \ H_{\Ga}^{ab}  \cong (\ZZ/2)^4.$$

Indeed $H_G = (\ZZ/2)^3 \rtimes H_{\mathfrak S_4}$, where $H_{\mathfrak S_4} \cong D_4$ is generated by a
$4$-cycle $(1,2,3,4)$ and the  double transposition $(1,2)(3,4)$.
The  $4$-cycle $(1,2,3,4)$ acts conjugating all the generators for $ (\ZZ/2)^3$, hence it follows easily 
that the image of  $ (\ZZ/2)^3$ in $H_{G}^{ab}$ is $\ZZ/2$, while the Abelianization of
$H_{\mathfrak S_4} \cong D_4 $ is $  (\ZZ/2)^2$. Hence the first assertion.

We have that $ H_{\Ga} = (\ZZ/2)^4 \rtimes H_{ \mathfrak A_5}$, and $H_{ \mathfrak A_5} \cong (\ZZ/2)^2$
 is  a Klein group of double transposition fixing one element in $\{1,2,3,4,5\} \cong \PP^1 (\FF_4)$.

Hence $H_{ \mathfrak A_5}$ is the group of projectivities $(x_1, x_2) \mapsto (x_1, x_2 + a x_1)$,
for $ a \in \FF_4$. Hence the image of  $ (\ZZ/2)^4$ in $H_{\Ga}^{ab}$ is $\FF_4 \cong (\ZZ/2)^2$,
and $H_{\Ga}^{ab} \cong (\ZZ/2)^4$. 

\end{proof}

\begin{rem}\label{mukai}
1) By using Mukai's theorem 0.6 \cite{mukai} it follows that either $G^s$ is a maximal subgroup acting symplectically,
or it is an index $2$  subgroup of $\Ga$ which acts symplectically, where $\Ga$ is the group of symplectic 
projectivities of the Fermat quartic, 
$$\Ga \cong  (\ZZ/4)^2 \rtimes \mathfrak S_4 \cong  (\ZZ/2)^4 \rtimes \mathfrak S_4,$$
(see \cite{mukai} page 191  for the last isomorphism).

2) The Cefal\'u K3 surface $S$ (as all the other Segre-type Kummer surfaces) possesses two elliptic pencils (induced by the $(\ZZ/2)^3 $ covering $X \ra Q = \PP^1 \times \PP^1$)
and these  are exchanged by the transpositions in 
$\mathfrak S_4$, since for instance the plane $ z_4 =0$ intersects $X$ in two conics:
$$ X \cap \{z_4=0\} = \{ (z_1^2 + \om z_2^2  + \om^2 z_3^2 ) (z_1^2 + \om^2  z_2^2  + \om z_3^2 )= z_4 = 0\}$$
which are exchanged by the transposition $(2,3)$ ($\om$ is here a primitive third root of unity).

It follows then easily that there is a symplectic automorphism exchanging the two pencils, and it suffices
to study the subgroup $G^0 $ of those symplectic automorphisms which do not exchange the two pencils.
By Mukai's theorem and since $G^0 \cap G^s$ has order $96$, the  cardinality of $G^0 $ is either $96$ or $192$.

3) Next, we divide $S$ by the group $(\ZZ/2)^3 \rtimes \mathfrak A_4$, obtaining 
$$Q / \mathfrak A_4 =  (\PP^1 \times \PP^1)/ \mathfrak A_4,$$ 
and set $S' $ to be the quotient by the symplectic index two subgroup
$(\ZZ/2)^2 \rtimes \mathfrak A_4$. 

Letting $S''$ be the Kummer surface double cover of $Q$ branched on the $4$ plane sections
($S''$ is the Kummer surface of $\sE\times \sE$, where $\sE$ is the Fermat elliptic curve),
$S' = S'' / \mathfrak A_4$ and is a double cover of $Q / \mathfrak A_4$.

The second projection of $Q$ onto $\PP^1$ yields an elliptic pencil on $S'$,
corresponding to the projection   $\PP^1 \ra \PP^1 / \mathfrak A_4 \cong  \PP^1$
(and the  fibration $ Q \ra   \PP^1 / \mathfrak A_4$ has  just three singular fibres,
 with multiplicities $(2,3,3)$).

   $G^0 / ( (\ZZ/2)^2 \rtimes \mathfrak A_4)$
is a group $G''$ of cardinality $2$ or $4$, hence  $(\ZZ/2)$ or $(\ZZ/2)^2$.

Let  $G'''$ be the subgroup of $G''$  preserving the fibration and acting  as the identity on the base, hence 
 sending  each fibre to itself. 
 
 Because it is a group of exponent two acting symplectically, it must act on the general fibre via translations of order $2$,
 from this follows   that   $ | G'''| \leq 4$.  But we shall  see next  that $ | G'''| = 4$, hence $G''= G'''$ has cardinality $4$.
 
\end{rem}

\begin{prop}\label{fermat}
The group of symplectic automorphisms of the Cefal\'u K3 surface $S$ has order $384$ and is isomorphic to
the group $\Ga$  of symplectic 
projectivities of the Fermat quartic, 
$$\Ga \cong  (\ZZ/4)^2 \rtimes \mathfrak S_4.$$ 

\end{prop}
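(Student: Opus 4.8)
The plan is to follow the chain of reductions prepared in Remark \ref{mukai}, so that the whole statement comes down to the single equality $|G'''| = 4$. The bookkeeping runs as follows. By Mukai's Theorem 0.6 \cite{mukai} (first part of Remark \ref{mukai}), the finite group $\Aut(S)^s$ of symplectic automorphisms of $S$ is either equal to $G^s$ --- in the case where $G^s$ is a maximal symplectic group --- or else it contains $G^s$ as a subgroup of index $2$ and is isomorphic to the group $\Ga \cong (\ZZ/4)^2 \rtimes \mathfrak S_4$ of symplectic projectivities of the Fermat quartic, of order $384$. Hence it is enough to produce one symplectic automorphism of $S$ not lying in $G^s$; and since $G^0 \cap G^s$ has order $96$ while $|G^0| \in \{96, 192\}$ (second part of Remark \ref{mukai}), it is enough to show that $|G^0| = 192$. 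Writing $N := (\ZZ/2)^2 \rtimes \mathfrak A_4$, so that $|N| = 48$ and $|G^0| = |N| \cdot |G''|$ with $|G''| \in \{2,4\}$, and recalling $G''' \subseteq G''$ with $|G'''| \le 4$ (third part of Remark \ref{mukai}), this amounts precisely to $|G'''| = 4$.

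The heart of the proof is thus to exhibit a copy of $(\ZZ/2)^2$ inside $G'''$, which I would do by analysing the elliptic pencil $\pi'$ on $S'$ directly. Here $S' = S''/\mathfrak A_4$, with $S''$ the Kummer surface of $\sE \times \sE$ for $\sE$ the Fermat (equianharmonic) elliptic curve, and $S' \to Q/\mathfrak A_4 \to \PP^1/\mathfrak A_4$ exhibits $\pi'$ as a double cover of $Q/\mathfrak A_4$ over the base $\PP^1/\mathfrak A_4 \cong \PP^1$. One first checks that a general fibre of $\pi'$ is the double cover of $\PP^1$ branched over the four points which are the images of $\sE[2]$, hence is isomorphic to $\sE$, and --- crucially --- that these four branch points are independent of the base parameter; thus $\pi'$ has constant modulus and its local monodromies lie in the subgroup $\{\pm 1\} \subset \Aut(\sE, 0)$, which acts trivially on $\sE[2]$. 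It follows that the $2$-torsion Mordell--Weil group of $\pi'$ is all of $\sE[2] \cong (\ZZ/2)^2$, and translation by a $2$-torsion section is a symplectic automorphism of $S'$ preserving $\pi'$ and acting as the identity on the base. Finally one checks that these four translations lift to symplectic automorphisms of $S$ contained in $G^0$ (being fibre-translations of the corresponding pencil on $S$, they do not interchange the two pencils), so that they yield the desired subgroup $(\ZZ/2)^2 \subseteq G'''$ and $|G'''| = 4$.

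Once $|G'''| = 4$ is known the proof closes immediately: then $|G''| = 4$, so $|G^0| = 48 \cdot 4 = 192$, so $G^0 \not\subseteq G^s$ (because $|G^0 \cap G^s| = 96$), so $\Aut(S)^s \supsetneq G^s$; by the dichotomy recalled above $G^s$ is then an index-two subgroup of $\Ga$ and $\Aut(S)^s = \Ga \cong (\ZZ/4)^2 \rtimes \mathfrak S_4$, of order $384$. I expect the genuinely delicate step to be the middle one --- determining the singular fibres of $\pi'$ with enough precision to control its monodromy on $\sE[2]$ (the potential obstruction being an order-three local monodromy arising from the multiplicity-$3$ fibres of $Q \to \PP^1/\mathfrak A_4$, which must be shown not to occur), and verifying that the resulting fibre-translations of $S'$ genuinely lift into $G^0 \subseteq \Aut(S)$. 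Everything else is order arithmetic together with the appeal to Mukai's classification.
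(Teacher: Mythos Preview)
Your overall scaffolding is exactly the paper's: both proofs invoke Remark \ref{mukai} to reduce everything to the single equality $|G'''|=4$ (equivalently $|G^0|=192$), and then close with the dichotomy from Mukai's theorem. The bookkeeping you write out is correct and matches the paper's.

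Where you diverge is in the construction of the four fibre-translations. You propose to work downstairs on $S'=S''/\mathfrak A_4$, analyse the monodromy of the elliptic pencil $\pi'$ to show that its full $2$-torsion Mordell--Weil group survives, and then lift the resulting translations back up to $S$. The paper instead goes \emph{upstairs}: it works on the abelian surface $\sE\times\sE$ itself, takes translations by $2$-torsion points of the first factor, and observes that these act trivially on the Picard group of $\sE\times\sE$; hence they extend across the \'etale isogeny $A\to\sE\times\sE$ (from Proposition \ref{special}) and so descend to automorphisms of $S=Km(A)$. Because they come from translations on an abelian surface they are automatically symplectic, and because they respect the product structure they sit in $G^0$; their nontriviality in $G''$ is then read off on $S''$. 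This completely bypasses both of the steps you flag as delicate: there is no monodromy computation on $\pi'$, and the lifting from $S'$ to $S$ is replaced by the transparent lifting along an isogeny of abelian surfaces.

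So your proposal is not wrong in spirit, but the lifting step you yourself identify as ``genuinely delicate'' --- producing automorphisms of $S'$ and then hoping they come from $G^0\subset\Aut(S)$ --- is exactly what the paper's argument avoids. If you want to complete your version, you would need an explicit reason why a fibre-translation of $S'$ lifts through the Galois cover $S\to S'$ with group $N=(\ZZ/2)^2\rtimes\mathfrak A_4$; the cleanest such reason is precisely the paper's observation (translations act trivially on $\Pic$, hence preserve the covering datum), at which point you might as well start on $\sE\times\sE$ from the outset.
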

\begin{proof}
In view of Mukai's theorem, see remark \ref{mukai}, 1), 2) and 3), we just need to show that 
the Kummer surface $S''$ of $\sE \times \sE$, where $\sE$ is the Fermat elliptic curve,
admits a group of automorphisms preserving the two fibrations induced by  the two product projections of  $\sE \times \sE$
onto  $\sE$, acting sympletically,
and of order $48$, since then $|G^0| = 192$.

For this purpose it suffices to take the group of automorphisms of $\sE \times \sE$
generated by translations by points of order $2$,
and by the automorphism of order $3$ which acts on the uniformizing parameters $(t_1, t_2)$ 
by $$ (t_1, t_2) \mapsto (\om t_1,\om^2  t_2), \ \om^3=1.$$ 

This group descends faithfully to a group of automorphisms of $S'' = Km (\sE \times \sE)$.
And the translations on the first factor  act trivially on the Picard group, hence they extend to the
\'etale covering $ A \ra \sE \times \sE$, where $A$ is the ppav such that $ S = Km (A)$;
therefore  they lift to automorphisms of $S$, thereby  showing that $|G'''| = 4$, as required.

\end{proof}

\begin{remark}\label{Tetrahedron}

We discuss now some claim by Hutchinson, that if $P_1, P_2, P_3, P_4$ are nodes of $X$ such that
the faces $w_i= 0, \ ( i= 1,\dots, 4)$ of the Tetrahedron $T$
having $P_1, P_2, P_3, P_4$ as  vertices are not tangent to $X$, then the Cremona transformation $(w_i) \mapsto (w_i^{-1})$
leaves $X$ invariant.

This is false, let in fact $$e: = \sum_1^4 e_i = (1,1,1,1),  \ P_i : = e - e_i,$$
so that these nodes are an orbit for the  group $\mathfrak S_4$. 

The faces of $T$ are the orbit of $ w_1 : = 2z_1 - z_2 - z_3 - z_4=0$.

Since the point $ (2,-1, -1, -1) \notin X$, as $ 7^2 - 3 (19) \neq 0$, the faces are not tangent to $X$ (we use here $ X = X^{\vee}$).

Consider the node $(0, 1,1, -1)$: the Cremona transformation $(w_i) \mapsto (w_i^{-1})$ sends this node to the point
$$ ( -1, \frac{1}{2}, \frac{1}{2}, - \frac{1}{4}),$$ 
which is not a node of $X$.
\medskip

Hutchinson involutions do of course exist, for instance if we have the Hudson normal form with $\al_0 = 0$, 
$$    \al_{01} (z_1^2 z_2^2 + z_3^2 z_4^2) +  \al_{10} (z_1^2 z_3^2 + z_2^2 z_4^2) +  \al_{11} (z_1^2 z_4^2 + z_2^2 z_3^2) + 2  \be z_1 z_2 z_3 z_4 = 0,$$
the surface is invariant for the Cremona transformation $(z_i) \mapsto (z_i^{-1})$.

More generally, to have a Hutchinson involution, one must take a G\"opel tetrad, that is, the four nodes should 
correspond to an isotropic affine space in the Abelian variety $A$, see \cite{m-o}. 

But we saw in remark \ref{thetanull} that a G\"opel tetrad does  not consist 
of  four independent points if $a_{00} a_{10} a_{01} a_{11}=0$,
that is, in classical terminology, if some Thetanull is vanishing.
\end{remark}

\section{Algebraic proofs for theorems \ref{166} and \ref{gauss}}

We gave short self-contained proofs over the complex numbers, but indeed the theorems hold also for an algebraically closed field 
of characteristic $p \neq 2$. And the results follow easily from existing literature.

The first  result, which I found and explained to Maria Gonzalez-Dorrego, inspired by  the  talk she gave in  Pisa,
and  is contained in \cite{dorrego}, is that there are exactly $3$ isomorphism classes of abstract nondegenerate $(16_6, 16_6)$
configurations, but only one can be realized in $\PP^3$, because of the theorem of Desargues.

The second important result of  \cite{dorrego} is that all these configurations in the allowed combinatorial
isomorphism class are projectively equivalent to the ones obtained as the $\sK$ orbit $\sN$ of a point 
$[a_1,a_2,a_3,a_4]$ satisfying the inequalities
(I), (II), (III), and by the set $\sN'$ of planes orthogonal to points of $\sN$.
 These inequalities are necessary and sufficient to  ensure that the cardinality of $\sN$ is exactly $16$. 
 
 After that,  it is known that two Kummer quartics with the same singular points are the same (one can either see this via projection from a node, since there is only one conic in the plane tangent to the six projected lines,  or another proof can be found in \cite{dorrego}, lemma 2.19, page 67).
 
 Hence it suffices to find explicitly such a Kummer quartic passing through these $16$ points, and this can be done finding
 such a surface in the family of  quartics in Hudson's normal form: the result follows  via the solution of a  system of linear equations
 (mentioned in the statement of theorem \ref{166}).
 
 After these calculations, the proof of the first assertion of theorem \ref{gauss} follows right away since the singular points of $X^{\vee} $ are exactly the ones of $X$,
 hence, by the previous observation, $X = X^{\vee} $.
 
 To prove that the Gauss map induces a fixpoint free involution $\ga$ on the minimal resolution $S$,   instead of carrying out a complicated calculation, 
 we use the easy computations used in the case of the Cefal\'u quartic, which show the assertion for one surface in the family
 (hence  the assertion holds on a Zariski open subset of the parameter space, which is contained in $\PP^3$).
 
 We   show now the result for all Kummer quartics.
 
 In fact, since $\ga$ is an involution, it acts on the nowhere vanishing $2$-form $\omega$ by multiplication by $\pm1$.
 Since for a surface of the family it acts multiplying by $-1$, the same holds for the whole family since the base is connected.
 This implies that $\ga$ can never have an isolated fixed point $x$, because then for local parameters $u_1, u_2$ at $x$ 
 we would have $u_i \mapsto - u_i$
 and $ \omega = (d u_1 \wedge d u_2 + \ {\rm higher \  order \  terms})  \mapsto \omega$, a contradiction.
 
 Exactly as shown  in the case of the Cefal\'u quartic, there are no fixed points on the union of the nodal curves $E_i$
 and of the tropes $D_i$. 
 
  Hence an irreducible  curve $C$ of fixpoints (therefore smooth) necessarily does not intersect the curves $E_i$, nor the tropes $D_i$.
  Hence $ C \cdot E_i = 0$ and  $ C \cdot D_i = 0$ for all $i$;  since $2D_i \equiv H - \sum_{P_j \in P_i^{\perp}} E_j$,
  we infer that $C \cdot H = 0.$ This implies that $C$ is one of the $E_i$'s, a contradiction.
  
  \begin{rem}
  An  argument  similar to the one we just gave, put together with theorem \ref{real}, should allow to recover  the full result of \cite{keum1}
  also for polarized Kummer surfaces  of degree $d = 4m  >  4$ in positive characteristic; the only point to verify is that the involution $\ga$, which is defined on the subfamily corresponding to Segre-type  Kummer surfaces, extends to the whole $3$-dimensional family of polarized
  Kummer surfaces.
  \end{rem}
\section{The role of the Segre cubic hypersurface} 
 
An important role in the theory of Kummer quartic surfaces is played by 
 the 10-nodal Segre cubic hypersurface in $\PP^4$ of equations (in $\PP^5$)
 $$ s_1 (x) : = s_1 (x_0, \dots, x_5) = 0, \ \  s_3 (x)  = 0.$$
 
 It has a manifest $\mathfrak S_6$-symmetry, it contains $10$ singular points, and
 $15$ linear subspaces of dimension $2$, as we shall now see. 

Since  surfaces of  degree $d=4$ with $16$ nodes contain a  weakly even set of cardinality $6$ (see \cite{babbage}, \cite{nodalsurfaces}),  they can be obtained as the discriminant of the projection of
a $10$-nodal  cubic hypersurface $ X \subset \PP^4$ from a smooth point.

It is known  (see \cite{goryunov}) that the maximal number of nodes that a  complex cubic hypersurface $X \subset \PP^m (\CC)$ 
can have equals $\ga(m)$, and $\ga(3) = 4, \ga(4) = 10, \ga(5)= 15, \ga(6) = 35$
(we have in general $\ga(m) = $ $m+1 \choose{[m/2]}$).

Equality is attained, for $m = 2h$ by the Segre cubic
$$ \Sigma (m-1)  : = \{ x \in \PP^{m+1} | \s_1 (x) =  \s_3(x) = 0\} =  \{ x \in \PP^{m+1} | s_1 (x) =  s_3(x) = 0\} ,$$
where as usual $\s_i$ is the i-th elementary symmetric function, and $s_i = \sum_j x_j^i$ is the i-th Newton function.

Whereas, for $ m = 2h +1$ odd, Goryunov produces
$$ T (m-1) : = \{ (x_0, \dots, x_m, z)| \s_1(x) =0, \s_3(x) +  z \s_2(x) +   \frac{1}{12} h (h+1)(h+2) z^3 = 0 \}=$$
$$ =  \{ (x_0, \dots, x_m, z)| s_1(x) =0, 2 s_3(x) - 3 z \s_2(x) + \frac{1}{2} h (h+1)(h+2) z^3 = 0 \}.$$

The nodes of the Segre cubic are easily seen to be the $\mathfrak S_{2h+2}$-orbit of the point $x$ satisfying $x_i=1 , \ i = 0, \dots, h, \ x_j = -1 , \ j =h+1, \dots, 2h+2$.

Whereas the linear subspaces of maximal dimension contained in the Segre cubic hypersurface are 
the $\mathfrak S_{2h+2}$-orbit of the subspace $x_i  + x_{i + h + 1} = 0, \ i = 0,1, \dots, h$.

These are, in the case $h=2$, exactly $15$ $\PP^2$'s.

We have the following result, which is due to Corrado Segre \cite{segre},
and we give a simple  argument here based on an easy  result of \cite{nodalsurfaces}.

\begin{theo}
Any nodal maximizing cubic hypersurface $X$  in $\PP^4$ is projectively equivalent to the Segre cubic.

\end{theo}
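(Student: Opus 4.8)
The plan is to put $X$ into an explicit normal form by fixing the positions of some of its ten nodes, and then to see that the ten node conditions force the cubic form to be, up to a projectivity, the one defining the Segre cubic.

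I would start from the elementary remark that the line joining any two nodes of $X$ lies on $X$, since it meets the cubic with multiplicity $\geq 4$; in particular every such line is contained in $X$, and (for triples of nodes) the plane they span either lies on $X$ — these will be among the $15$ planes — or meets $X$ in the triangle of the three joining lines. Combined with the position statement for the nodes of a maximizing nodal hypersurface coming from \cite{nodalsurfaces} (no three of the nodes collinear, the ten nodes in sufficiently general linear position), this lets us choose five nodes $p_0,\dots ,p_4$ spanning $\PP^4$ and a sixth node $p_5$ in general position with respect to them, and then pick coordinates $y_0,\dots ,y_4$ with $p_i=e_i$ for $0\le i\le 4$ and $p_5=(1{:}1{:}1{:}1{:}1)$. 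The condition that the cubic form $F$ be singular at $e_i$ is precisely that no monomial of $F$ be divisible by $y_i^2$; imposing this for $i=0,\dots ,4$ gives
\[
F=\sum_{0\le i<j<k\le 4} c_{ijk}\, y_i y_j y_k ,
\]
a member of a $9$-dimensional (projective) linear system. The Segre cubic, written in coordinates adapted to five of its own nodes, has exactly this shape, so it suffices to show there is essentially only one $10$-nodal member of this system.

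Next I would impose singularity at the remaining five nodes. Singularity at $p_5=(1{:}\cdots{:}1)$ is four independent linear conditions on the ten coefficients $c_{ijk}$ (the fifth partial derivative being dependent by Euler), cutting the system down to a $\PP^5$ whose general member has exactly the six nodes $e_0,\dots ,e_4,p_5$. The four further nodes $p_6,\dots ,p_9$ cannot each impose five more independent conditions, so the heart of the matter is to check that the full system of node conditions has a one-dimensional solution space and that this solution is the Segre cubic. Here I would use the combinatorial rigidity of the configuration — the triangle/plane statement above constrains where the remaining nodes can sit relative to the chosen frame — together with the genericity from \cite{nodalsurfaces}, to pin the configuration, hence the cubic, down to the $\mathfrak S_6$-symmetric one. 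An essentially equivalent, more conceptual way to finish is a rigidity count: one shows the locus of $10$-nodal cubics in $\PP^{34}$ is irreducible of dimension $24=\dim \mathrm{PGL}_5$ — e.g. via an equisingular deformation / tangent-space computation at the Segre cubic, using that its large defect forces $H^{1,2}$ of a small resolution to vanish — so that $\mathrm{PGL}_5$ acts transitively and $X$ is projectively the Segre cubic.

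The step I expect to be the real obstacle is exactly this last one: controlling the positions of the nodes beyond the first six and showing that the linear system cut out by all ten node conditions collapses to a single cubic. Everything before it (the normal form and the count of conditions) is routine once the position lemma from \cite{nodalsurfaces} is in hand; indeed the point of invoking that lemma is to trivialize the bookkeeping, so that the remaining verification becomes a short linear-algebra computation rather than a delicate case analysis.
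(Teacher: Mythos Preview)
Your setup is sound up to the point you flag, but the flagged step is a genuine gap, not a routine verification. Writing $F=\sum c_{ijk}y_iy_jy_k$ with nodes at $e_0,\dots,e_4$ and $(1{:}\cdots{:}1)$ is fine, but you then have to locate the remaining four nodes and show the resulting linear conditions cut the family to one point. You offer two ways forward: ``combinatorial rigidity'' of the node configuration, and an equisingular tangent-space count showing the $10$-nodal locus has dimension $24$. Neither is actually carried out. The first requires knowing \emph{a priori} that the ten nodes form a fixed projective configuration (essentially the $\binom{5}{2}$ midpoints, or equivalently the $(15_3)$ incidence with the planes), and establishing that combinatorially from scratch is exactly the hard content of the theorem. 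The second is circular as stated: irreducibility of the $10$-nodal locus does not come for free from a tangent-space calculation at the Segre cubic, since that only gives local information near one component; you would still need a global argument to exclude other components, and the cleanest such argument is again projective uniqueness. The reference to a ``position lemma'' in \cite{nodalsurfaces} does not supply this --- what that source gives is the bound $\gamma\le 10$ and smoothness of the tangent cone, not the full configuration of all ten nodes.

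The paper's route is shorter and avoids the difficulty entirely. One projects from a single node $P$: writing $F(x,z)=zQ(x)+G(x)$ with $P=(0{:}\cdots{:}0{:}1)$, corollary~89 of \cite{nodalsurfaces} says that $Q$ is a smooth quadric in $\PP^3$ and the complete intersection curve $\{Q=G=0\}$ is nodal with $\gamma-1$ nodes. A $(3,3)$ curve on $\PP^1\times\PP^1$ has at most $9$ nodes, with equality iff it is three vertical plus three horizontal lines. This simultaneously gives $\gamma\le 10$ and, in the equality case, projective uniqueness: one normalizes $Q$, uses $\Aut(\PP^1\times\PP^1)$ to fix the six lines, and absorbs the ambiguity $G\mapsto G+L\cdot Q$ into the coordinate change $z\mapsto z+L(x)$. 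The whole argument fits in a paragraph because the projection collapses the $10$-node problem in $\PP^4$ to a $9$-node problem for a $(3,3)$ curve, where uniqueness is obvious.
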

\begin{proof}

Let $P$ be a node of $X \subset \PP^4$, and consider the Taylor development of its equation at the point $P = (0, 0, 0, 0, 1)$,
$ F(x,z) = z Q(x) + G(x)$.

Then  by corollary 89 of \cite{nodalsurfaces}, section on cubic fourfolds with many nodes,
the curve in $\PP^3$ given by $Q(x) = G(x) = 0$ is a nodal curve contained in a smooth quadric and has $\ga -1$ nodes,
where $\ga$ is the number of nodes of $X$. But a curve of bidegree $(3,3)$ on $Q = \PP^1 \times \PP^1$
has at most $9$ nodes, equality holding if and only if it consists of three vertical and three horizontal lines.

 Hence $ \ga \leq 9 + 1=10$, and the equality case is projectively unique.

 \end{proof}
 
 It is an open question whether a similar result holds for all Segre cubic hypersurfaces 
 (Coughlan and  Frapporti  in \cite{c-f} proved the weaker result that the small equisingular deformations 
 of the Segre cubic are  projectively equivalent to it).

 It follows that all Kummer quartic surfaces $X$ are obtained as discriminants of the projection of
the Segre   cubic hypersurface $ \Sigma \subset \PP^4$ from a smooth point $P$, which does not lie in
any of the $15$ subspaces $L \subset \Sigma, \ L \cong \PP^2$. 

Of the $16$ nodes of $X$, $10$ are images of the $10$ nodes of $\Sigma $, while $6$ nodes correspond to the
complete intersection of the linear, quadratic, and cubic  terms of the Taylor expansion at $P$ of
the equation of $\Sigma $ (see \cite{nodalsurfaces}, section 9 on discriminants of cubic hypersurfaces).

We point out here an interesting connection to the work of  Coble, (\cite{coble}, page 141) and of van der Geer \cite{vanderGeer}, who proved that 
a compactification of 
the Siegel modular threefold,   the  moduli space for principally polarized Abelian surfaces with
a level $2$ structure,  is the dual variety $\sS$ of the Segre cubic $\Sigma$,
$$\Sigma^{\vee} =  \sS : =  \{ x \in \PP^{5} | s_1 (x) =  s_2(x)^2 -    4 s_4 (x)  = 0\} $$
The singular set of $\sS$,  also called the Igusa quartic, or Castelnuovo-Richmond quartic,
 consists exactly of $15$ lines, dual to the subspaces $L \subset \Sigma, \ L \cong \PP^2$.

 Coble and Van der Geer show that to a general point $P'$ of $\sS$ corresponds the Kummer surface $X'$ obtained by
intersecting $\sS$ with the tangent hyperplane to $\sS$ at $P'$.

Observe that the Hudson equation
$$\al_{00} ^3 -   \al_{00}  ( \al_{10} ^2 + \al_{01} ^2 + \al_{11} ^2 - \be^2) + 2   \al_{10}  \al_{01}  \al_{11} = 0$$ 
defines a cubic hypersurface with $10$ double points as singularities, hence projectively equivalent to
the Segre cubic (\cite{dolgachev}, \cite{hudson}, \cite{d-o}, \cite{vanderGeer}). And to a point of this hypersurface
corresponds the Kummer surface in Hudson's equation above,  intersection of  $\sS$ with the tangent hyperplane
to $\sS$ at the dual point.

\begin{prop}
Let $P \in \Sigma$ be a smooth point of the Segre cubic which does not lie in any of the $15$ planes contained in $\Sigma$.

Then the Kummer quartic $X_P$ obtained as the discriminant for the projection $\pi_P : \Sigma \ra \PP^3$
is the dual of the Kummer quartic $$X'_P: = \sS \cap T_{P'} \sS \subset T_{P'} \sS \cong \PP^3,$$ where $P' \in \sS$ is the dual point of $P$.
\end{prop}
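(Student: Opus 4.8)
The plan is to exploit the biduality between the Segre cubic $\Sigma$ and its dual $\sS$, and to track how the two classical constructions of Kummer quartics --- discriminant of a projection of $\Sigma$, versus intersection of $\sS$ with a tangent hyperplane --- transform into one another under projective duality. First I would recall the general fact (see \cite{nodalsurfaces}, section 9) that the discriminant $X_P$ of the projection $\pi_P\colon\Sigma\dashrightarrow\PP^3$ with centre $P$ is, by definition, the locus of hyperplanes through $P$ that are tangent to $\Sigma$ at some point other than $P$; equivalently, $X_P$ is the image, under the duality $\PP^{4\vee}\cong\PP^3$ identifying hyperplanes through $P$ with points of the target $\PP^3$, of the hyperplanes $H$ with $H\supset P$ and $H$ tangent to $\Sigma$. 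On the other hand, $\sS=\Sigma^{\vee}$ sits in the dual $\PP^{4\vee}$, and the tangent hyperplanes to $\Sigma$ are exactly the points of $\sS$; the condition $H\ni P$ carves out precisely the hyperplane section $\sS\cap H_P$, where $H_P\subset\PP^{4\vee}$ is the hyperplane dual to the point $P$. Since $P$ is a smooth point of $\Sigma$, biduality gives $T_{P'}\sS=H_P$ with $P'\in\sS$ the point dual to the tangent hyperplane $T_P\Sigma$, so $\sS\cap H_P=\sS\cap T_{P'}\sS=X'_P$. Thus set-theoretically $X_P$ and $X'_P$ are identified by the duality isomorphism between the space of hyperplanes through $P$ and the $\PP^3$ ambient to $X'_P$.

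The second step is to upgrade this identification to the statement $X_P=(X'_P)^{\vee}$ rather than merely $X_P\cong X'_P$. Here I would argue as follows: both $X_P$ and $X'_P$ are Kummer quartics (the former by Corollary 89 of \cite{nodalsurfaces} and the analysis of the $16$ nodes given in the excerpt; the latter because $\sS$ is the Igusa quartic and a general tangent-hyperplane section of it is a Kummer surface, by Coble and van der Geer). By Theorem~\ref{gauss}, every Kummer quartic is strictly self-dual, so it suffices to show that the projectivity implementing the duality $\PP(\text{hyperplanes through }P)\cong T_{P'}\sS$ carries the dual variety $(X'_P)^{\vee}$ to $X_P$ --- and for this I would use that the dual of a hyperplane section $\sS\cap T_{P'}\sS$ is exactly the image of the projection of $\sS^{\vee}=\Sigma$ from the point $P$ (dual to the hyperplane $T_{P'}\sS$), which is again a general fact about duals of hyperplane sections of a hypersurface whose Gauss map is generically finite. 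Concretely: for a hypersurface $Y\subset\PP^n$ and a generic hyperplane $H$, one has $(Y\cap H)^{\vee}=\pi_{h}(Y^{\vee})$ where $h\in\PP^{n\vee}$ is dual to $H$ and $\pi_h$ is projection from $h$; applying this with $Y=\sS$, $H=T_{P'}\sS$, $h=P$, $Y^\vee=\Sigma$ gives $(X'_P)^{\vee}=\pi_P(\Sigma)\cap(\text{discriminant correction})$, which is precisely $X_P$.

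I expect the main obstacle to be the bookkeeping of \emph{which} $\PP^3$ each quartic lives in and checking that the two natural $\PP^3$'s --- the target of $\pi_P$ and the hyperplane $T_{P'}\sS$ --- are identified by an honest projectivity under which duality is preserved on the nose (this is where strict versus weak self-duality could in principle bite, but Theorem~\ref{gauss} neutralises it). A secondary technical point is verifying that the genericity hypothesis on $P$ --- namely that $P$ avoids the $15$ planes in $\Sigma$ --- is exactly what is needed both for $X_P$ to be a genuine $16$-nodal quartic and for the dual-of-hyperplane-section formula to apply without degenerate correction terms; this should follow by matching the excluded loci on the two sides (the $15$ planes in $\Sigma$ are dual to the $15$ lines in $\Sing(\sS)$, so $P\notin L$ for all planes $L\subset\Sigma$ corresponds to $P'$ avoiding a locus where $\sS\cap T_{P'}\sS$ fails to be a Kummer surface). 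Once these identifications are pinned down, the equality $X_P=(X'_P)^{\vee}$ is immediate from biduality applied to $\Sigma$.
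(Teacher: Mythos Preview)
Your strategy --- relate the two constructions through biduality of $\Sigma$ and $\sS$ --- is the right idea and genuinely different from the paper's route, but two concrete errors keep your argument from going through as written.

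First, in your Step~1 you assert that the discriminant $X_P$ ``is, by definition, the locus of hyperplanes through $P$ that are tangent to $\Sigma$''. It is not. The branch locus $X_P$ sits in the target of $\pi_P$, namely $\PP^4/P$ (lines through $P$), whereas the locus of tangent hyperplanes through $P$ is $\sS\cap H_P = X'_P$, sitting in $H_P\subset\PP^{4\vee}$ (hyperplanes through $P$). These two $\PP^3$'s are naturally \emph{dual} to each other, not canonically identified; the map you describe, $[\ell]\mapsto [T_z\Sigma]$ where $\ell$ is tangent to $\Sigma$ at $z$, is the restriction of the Gauss map of $\Sigma$ and is nonlinear. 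Showing that this nonlinear map is exactly the Gauss map of $X_P$ \emph{is} the content of the proposition, so Step~1 assumes what is to be proved.

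Second, the formula you invoke in Step~2, $(Y\cap H)^{\vee}=\pi_{h}(Y^{\vee})$, is false when $Y^{\vee}$ is a hypersurface: then $\pi_h(Y^{\vee})$ is all of $\PP^{n-1}$. The correct classical statement is that $(Y\cap H)^{\vee}$ equals the \emph{branch locus} of $\pi_h|_{Y^{\vee}}$, not the image. With that correction (taking $Y=\sS$, $H=H_P=T_{P'}\sS$, $h=P$, $Y^{\vee}=\Sigma$) one does obtain $(X'_P)^{\vee}=X_P$ directly, and your appeal to Theorem~\ref{gauss} becomes unnecessary. But you would then need to supply the proof of this branch--locus/dual--of--section formula, which amounts to checking that the tangent hyperplane to $X_P$ at a general point $x=\pi_P(z)$ is the image in $H_P$ of $T_z\Sigma$.

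The paper proceeds instead by direct computation in affine coordinates: writing the equation of $\Sigma$ as $L(x)u^2+2Q(x)u+G(x)=0$ with $P=(1,0)$, so that $X_P=\{f:=LG-Q^2=0\}$, it verifies by differentiating that at the tangency point $(u,x)$ (where $uL+Q=0$) the gradient of $F$ is proportional to that of $f$. This is exactly the verification that the Gauss map of $X_P$ agrees with $z\mapsto T_z\Sigma$, hence lands in $X'_P$. Your conceptual approach, once the branch--locus formula is stated and proved correctly, would give a coordinate--free proof of the same fact; but as it stands, the argument has a gap precisely at the step that carries the content.
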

\begin{proof}
By biduality, the  points $y$ of $X'_P$ correspond to the hyperplanes tangent to $\Sigma$ and passing through $P$.
If $z \in \Sigma$ is the tangency point of the hyperplane $y$, then the line $P * z$ is tangent to $\Sigma$,
 hence
 $z$ maps to a point $ x \in X_P$.  
To the hyperplane $y$ corresponds a hyperplane $w$ in $\PP^3$ which contains $x$.

Hence we have to show that, if $ x \in X_P$ is general, then the inverse image of the tangent plane $w$ to $X_P$ at $x$ is the tangent
hyperplane  $y$ at the point $z \in \Sigma$ where the line $P * z$ is tangent to $\Sigma$.

To this purpose, take coordinates $(u, x_0, x_1,x_2,x_3) = : (u,x)$, so that $ P = (1,0)$.

Then we may write the equation $F$ of $\Sigma$ as
$$ L(x)  u^2 + 2 Q(x) u + G(x) = 0.$$
$P$ being a smooth point means that $L(x)$ is not identically zero.

The equation of $X : = X_P$ is $ f (x) : = L(x) G(x) - Q(x)^2 = 0$. If $x \in X$, and  $L(x)$ does not vanish at $x$,
there is a unique point $(u,x)$ such that $\frac{\partial F}{\partial u} = 2 (u L(x) + Q(x))=0$.
In this point the other partial derivatives are proportional to the partials of $f$, since
$$\frac{\partial F}{\partial x_i} = u^2 \frac{\partial L}{\partial x_i} + 2 u  \frac{\partial Q}{\partial x_i} +  \frac{\partial G}{\partial x_i} ,$$
which multiplied by $L^2$ and evaluated at $ x \in X$ yields
$$u^2 L^2 \frac{\partial L}{\partial x_i} + 2 u L^2  \frac{\partial Q}{\partial x_i} + L^2  \frac{\partial G}{\partial x_i} =
LG  \frac{\partial L}{\partial x_i} - 2 QL  \frac{\partial Q}{\partial x_i} + L^2  \frac{\partial G}{\partial x_i} = 
  L \frac{\partial f}{\partial x_i} ,$$
  q.e.d. for the main assertion.
  
  Let us  now prove  the first assertion, first of all let us explain why we must take $P$ outside of these $15$ planes. 
Because, if $P \in \Lam$, then there are coordinates $x_2, x_3$ such that the equation $F$ of $\Sigma$
lies in the ideal $(x_2, x_3)$; then the equation $f$ of $X$ lies in the square of the ideal $(x_2, x_3)$, hence
$X$ has a singular line.

If $P$ is a smooth point, it cannot be collinear with two nodes of $\Sigma$ unless it lies in one of the $15$ planes $\Lam$
contained in $\Sigma$. In fact, using the $\mathfrak S_6$-symmetry, we see that any pair of  nodes is in the orbit
of the pair formed by $(1,1,1,-1,-1,-1)$ and $(1,-1,-1,1,1,-1)$, which is contained in  $\Lam : = \{ z | z_i + z_{i+3}=0\}. $

Hence the $10$ nodes of $\Lam$ project to distinct nodes of $X$ (cf. \cite{nodalsurfaces}),
where the forms $L,Q,G$ do not simultaneously vanish.

There is now a last condition required in order that $\{ L(x) = Q(x) = G(x) = 0\}$
consists of 6 distinct points in $\PP^3$ (which are then nodes for $X$).

In fact $\{ L(x) = Q(x) = G(x) = 0\}$ is the equation of the lines passing through $P$. The Fano scheme
$F_1(\Sigma)$ has dimension $2$ (see for instance \cite{nodalsurfaces}), hence if 
$$\sU \subset F_1(\Sigma) \times\Sigma \subset  F_1(\Sigma) \times \PP^4 $$
is the universal family of lines contained in $\Sigma$, the last condition is that $P$ is not in the set $\sB$ of critical values
of the projection $\sU \ra \Sigma$, which contains the $15$ planes $\Lam \subset \Sigma$ ($\sB$ is the set where the fibre
is not smooth of dimension $0$).

 Indeed, it turns out that $\sB$ is the union of  the $15$ planes $\Lam \subset \Sigma$, see \cite{segre1}, \cite{segre}.
 A slick proof  was given in \cite{d-o} page 184: since the tangent hyperplane at a point of $\Sigma$ yields a quartic 
 which can be put in 
 Hudson normal form, hence has $\sK \cong (\ZZ/2)^4$-invariance, the same holds for the dual. And since there are already $10$
 distinct nodes, it follows that $X$ has exactly $16$ nodes, hence $\{ L(x) = Q(x) = G(x) = 0\}$
consists of $6$ distinct points in $\PP^3$.

\end{proof}

\begin{rem} It is  interesting to study  the modular meaning of the family with parameters $(a_i)$. 
Gonzalez-Dorrego \cite{dorrego} showed that the moduli space of such nondegenerate $(16_6, 16_6)$
configurations is the quotient of the above open set in $\PP^3$ for the action of a  subgroup $H$ of $\PP GL (4, K)$, which is a semidirect product
$$ (\ZZ/2)^4 \rtimes \mathfrak S_6, $$ and which is  the normalizer of $\sK \cong (\ZZ/2)^4$.
 The
group $\mathfrak S_6$ appearing in this and in the other representations (the one of \cite{vanderGeer} for instance),   is the group of permutations of the $6$ Weierstrass points
on a curve of genus $2$, 
and it is known (see \cite{kondo} page 590)  that 
$\mathfrak S_6 \cong Sp (4, \ZZ/2)$, so that the extension is classified by the action of $\mathfrak S_6$ on the group
of $2$-torsion points of the Jacobian of the curve.
\end{rem}

\section{Enriques surfaces \'etale quotients of Kummer K3 surfaces}

We have seen that the Gauss map of a quartic Kummer surface (in canonical coordinates) yields
a fixpoint free involution $\ga : S \ra S$ on the K3 surface $S$ which is the minimal resolution of $X$.

$S$ contains the $16$ disjoint exceptional curves $E_i$, for $P_i \in \sN$, such that $E_i^2 = -2$,
($\sN$ is the set of nodes) and  $16$ disjoint  curves $D_i$, for $i \in \sN$, such that $D_i^2 = -2$,
corresponding to the tropes, that is, the planes orthogonal to $P_i$, such that 
$$ H \equiv 2 D_i + \sum_{E_j \cdot D_i = 1} E_j.$$

And $E_j \cdot D_i = 1$ if and only if the point $P_j$ belongs to the trope $P_i^{\perp}$,
equivalently, if the scalar product $P_i \cdot P_j = 0$.

$\ga$ sends $E_i$ to $D_i$, hence, in the quotient Enriques surface $Z : = S / \ga$,
we obtain $16$ curves $E'_i$ with $(E'_i)^2 = -2$ ($E'_i$ is the image of $E_i$)
and they have the following intersection pattern:
$$ E'_i \cdot E'_j = 1 \Leftrightarrow P_i \cdot P_j = 0, { \rm else} \ E'_i \cdot E'_j = 0.$$

Observe that, since the inverse image of $E'_i$ equals $E_i + D_i$, there is no point of $Z$ where three
curves $E'_i$ pass, because the curves $E_i$ are disjoint, likewise the curves $D_i$.

\begin{defin}
We define the {\bf Kummer-Enriques graph} the graph $\Ga_Z$ whose vertices are the points $P_i$,
and an edge connects $P_i$ and $P_j$ if and only if $ P_i \cdot P_j = 0$.

It is the dual graph associated to the configuration of curves $E'_i$ in the Enriques surface $Z$.

\end{defin}

Without loss of generality, to study $\Ga_Z$ it suffices to consider the special case where $\sN$ is
the set of nodes of the Cefal\' u quartic, the orbit of $[0, \pm 1, \pm 1, \pm1 ]$
under the action of the Klein group $\sK'$ acting via double transpositions.
Because $\sN$ is the orbit of the group $\sK \cong (\ZZ/2)^2 \oplus (\ZZ/2)^2 $,
the graph $\Ga_Z$ is a regular graph. The advantage of seeing $\sN$ as the set of nodes
of the Cefal\' u quartic $X$ is that we can use the group $G$ of projectivities leaving $X$ invariant
as group of symmetries of the graph, since $G$ acts through orthogonal transformations.

\begin{prop}\label{ke}
The  Kummer-Enriques graph  $\Ga_Z$ contains $16$ vertices, $48$ edges, each vertex has 
exactly $6$ vertices at distance $1$, $6$ vertices at distance $2$, $3$ vertices at distance $3$.

It contains $32$ triangles and $48$ edges, so that, adding a cell for each triangle, we get
a triangulation of the real $2$-dimensional torus.

In particular, the maximal number of pairwise non neighbouring vertices is $4$.
\end{prop}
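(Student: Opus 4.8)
The plan is to reduce every assertion to an explicit finite computation with the sixteen nodes of the Cefal\'u quartic, exploiting the symmetry. Since $\Ga_Z$ depends only on the projective equivalence class of the $(16_6,16_6)$ configuration $\sN$, by Theorem \ref{166} I may take $\sN$ to be the $\sK$-orbit of $[1,1,1,0]$, i.e. the sixteen points of $\PP^3$ with exactly one zero coordinate and the remaining three equal to $\pm1$, the edge relation being $P_i\sim P_j \iff \langle P_i,P_j\rangle=0$ for the standard scalar product (well defined on $\sN$). Because $G$ acts on $\sN$ by orthogonal matrices it acts on $\Ga_Z$ by graph automorphisms; since already $\sK$ is transitive on the vertices, and the stabiliser of a vertex is transitive on the incident edges (a point I would check once), every ``local'' statement reduces to a statement about the link of a single vertex, resp. a single edge.

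Next I would fix $P_0:=[1,1,1,0]$ and carry out the bounded bookkeeping: (a) list the six vertices orthogonal to $P_0$, which gives degree $6$ and hence $f_1=16\cdot 6/2=48$ edges; (b) write down the subgraph induced on those six neighbours (the link of $P_0$) and count its edges, which equals the number of triangles through $P_0$ and, by vertex transitivity, determines the total number of triangles; computing independently the number of triangles on a fixed edge provides the cross-check that there are exactly $32$; (c) apply the adjacency relation twice to obtain the set of vertices at distance $2$ from $P_0$, the remaining ones being at distance $3$, which yields the profile $(6,6,3)$. Each of these is a short computation once the sixteen coordinate vectors are tabulated, and is further compressed by the symmetry.

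For the torus statement I would form the $2$-complex $\Delta$ with $f_0=16$ vertices, $f_1=48$ edges and one triangular $2$-cell for each of the $32$ triangles, so that $\chi(\Delta)=16-48+32=0$; then, again only at one vertex and one edge thanks to the $G$-action, I would verify that every edge lies on exactly two triangular cells and that the cells incident to a vertex close up into a single disk, so that $\Delta$ is a closed surface, and I would rule out the Klein bottle by exhibiting an orientation of the $32$ triangles coherent under an index-two subgroup of $G$. As for the final assertion, a set of pairwise non-neighbouring vertices is an independent set of $\Ga_Z$, equivalently a set of pairwise disjoint $(-2)$-curves $E'_i$ on $Z$; I would exhibit one of size $4$ (for instance the four nodes sharing a common zero coordinate, which are pairwise non-orthogonal by step two) and rule out size $5$ by checking, using transitivity and the stabiliser of $P_0$, that the subgraph induced on the nine non-neighbours of a vertex has independence number $3$; alternatively this is precisely assertion (vii) of Theorem \ref{Cefalu'}.

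The routine parts are the vertex/edge/triangle counts, the distance computation and the independence-number check. The step I expect to be the real obstacle is recognising $\Delta$ as the torus: the Euler-characteristic count alone only says $\Delta$ is ``torus-like'', and one must genuinely verify that $\Delta$ is a bona fide closed orientable surface, i.e. that the triangles around each vertex glue into a single disk and not into several, and that $\Delta$ is not the Klein bottle. I would handle this by identifying the abstract complex $\Delta$ explicitly with a standard triangulation of a flat torus $\RR^2/L$ for a suitable rank-two lattice $L$, which makes both the surface property and an orientation manifest and simultaneously re-derives the distance profile $(6,6,3)$.
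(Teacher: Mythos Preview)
Your approach is essentially the same as the paper's: reduce to the Cefal\'u configuration, use the $G$-action for vertex- and edge-transitivity, do the local count at $P_0=[1,1,1,0]$ and at one edge, compute $\chi=0$, and finish the independence number by a short case analysis. Two small remarks. First, you are right to flag the torus step as the delicate one: the paper only verifies that each edge lies on exactly two triangles and that $\chi=0$, and infers ``2-manifold'' and ``torus'' without explicitly checking that the link of a vertex is a single $6$-cycle or that the surface is orientable; your plan to identify $\Delta$ with a concrete lattice triangulation of $\RR^2/L$ (or simply to check that the six neighbours of $P_0$ form a $6$-cycle in the link, and to exhibit a coherent orientation) actually fills a gap the paper leaves implicit. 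Second, your ``alternative'' of invoking Theorem~\ref{Cefalu'}(vii) for the independence bound is circular, since that assertion is established in the paper precisely via this proposition; stick with your direct argument that the nine non-neighbours of $P_0$ have independence number $3$.
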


\begin{proof}
There are $16$ vertices, and for each vertex there are exactly $6$ vertices at distance one.
Hence each vertex belongs to $6$ edges and there are exactly $48$ edges.

$ G = C_2^3 \rtimes \mathfrak S_4$, and the set of vertices consists of $4$ blocks of cardinality $4$
(where one fixed coordinate is $0$), permuted by $ \mathfrak S_4$, while $C_2^3$ acts transitively
on each block.

Hence, noticing that elements of the same block are not orthogonal, we see that $G$ acts transitively 
on the set of oriented edges, with stabilizer of order $2$.

We show that each edge belongs to exactly $2$ triangles, hence the number of triangles is $\frac{1}{3} (2 \cdot 48 )= 32$.

By transitivity, consider two neighbouring vertices, namely $P: = [1,1,1,0]$ and $P' : = [0,1,-1, 1]$. 
If $P''$ is orthogonal to both, then necessarily either the second or the third coordinate equals to $0$,
and we get the two solutions $P'' =  [1,0,-1,-1]$ or $P'' =  [1,-1,0,-1]$, which are not orthogonal, thereby proving that
each edge is a side of exactly two triangles.

Adding a cell for each triangle, we get
a triangulation of the real $2$-dimensional torus, since we get a 2-manifold whose  Euler number is $ 16- 48 + 32=0$.

Follows in particular that for each vertex $P$  there are exactly $6$ triangles with vertex $P$.

To count the diameter of the graph, start from $P_1 : = [1,1,1,0]$ and observe that $P_1$ is stabilized by $C_2 \times \mathfrak S_3$ and its $6$ neighbours are 
the $ \mathfrak S_3$-orbit of $[0,1,-1, 1]$, where $\mathfrak S_3$ permutes the first three coordinates.

Each neighbour produces $3$ more neighbours, but in total at distance $2$ we have   $6$ vertices,
the remaining ones with exception of the three vertices   which form the $\mathfrak S_3$-orbit of $[1,1,0,1]$,
namely $[1,1,0,1]$, $[1,0,1,1]$, $[0,1,1 ,1]$.

At distance $2$ from $P_1$ we have the $\mathfrak S_3$-orbit $\sS_1$ of $[1,1,-1, 0]$ (three elements) and 
 the $\mathfrak S_3$-orbit $\sS_2$  of $[1,1,0, -1]$ (three elements).
 
 To show that $[1,1,0,1]$ is at distance $3$ from $P_1  = [1,1,1,0]$ we give the path through 
$P_2  = [0,1,-1, 1] $ and $P_3  = [-1,1,1,0]$.

 Hence our assertion is proven.

We try now to determine  each maximal set $\sM$ of pairwise non-neighbouring vertices.

We  can immediately see two  solutions:
$$\sM_1 : = \{ [1,1,1,0], [1,1,0,1] , [1,0,1,1] , [0,1,1 ,1]\}$$
which consists of four vertices which are pairwise at distance $3$.

Or, the elements of the same block, such as 
$$ \sM_2 : = \{ [1,1,1,0], [1,1,-1,0] , [1,-1,1,0] , [-1,1,1 ,0]\},$$
which consists of vertices which are pairwise at distance $2$.

 We want to see whether there are  other solutions,
up to $G$-symmetries. Now, if there are two vertices at distance $3$
we may assume that these are $ P_1 = [1,1,1,0],  P_4 = [0,1,1 ,1]$. $\sM$ cannot contain 
the $6$ neighbours of $P_1$ and the $6$ neighbours of $P_4$, hence $\sM =  \sM_1 $.

If all the vertices are at distance $2$, then we may assume that $\sM$ contains $P_1$, and then 
some of the six vertices  at distance $2$. We observe that these form two $\mathfrak S_3$-orbits
$\sS_1$ and $\sS_2$, and these have the property that two vertices of the same orbit are not neighbouring,
while two  vertices of different  orbits are  neighbouring.

Hence we find  just another solution
$$ \sM_3 : = \{ [1,1,1,0], [1,1,0,-1] , [1,0 ,1,-1] , [0,1,1 ,-1]\},$$
 and we conclude that all the solutions are in the respective $G$-orbits of $\sM_1, \sM_2, \sM_3$.

\end{proof}

\begin{cor}
The Enriques surface $Z = S / \ga$ contains sets of $4$ disjoint $(-2)$ curves, hence $Z$ is in several ways
the minimal resolution of a $4$-nodal Enriques surface. 
\end{cor}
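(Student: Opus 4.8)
The plan is to read the conclusion off directly from Proposition \ref{ke}, using the dictionary set up just before it between the geometry of $Z$ and the combinatorics of the Kummer-Enriques graph $\Ga_Z$. Recall that on $Z = S/\ga$ the sixteen curves $E'_i$ satisfy $(E'_i)^2 = -2$, and $E'_i \cdot E'_j = 1$ precisely when $P_i \cdot P_j = 0$, i.e.\ precisely when $P_i$ and $P_j$ are joined by an edge of $\Ga_Z$, while $E'_i \cdot E'_j = 0$ otherwise. Consequently a set of pairwise disjoint curves among the $E'_i$ is exactly a set of pairwise non-neighbouring vertices of $\Ga_Z$.

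First I would invoke the final assertion of Proposition \ref{ke}, which exhibits (up to the action of $G$) the three maximal independent sets $\sM_1, \sM_2, \sM_3$, each of cardinality $4$, and no independent set of cardinality $5$. Translating through the dictionary, each $\sM_k$ yields four mutually disjoint smooth rational curves among the $E'_i$, each of self-intersection $-2$; their intersection matrix is $-2 \cdot \mathrm{Id}_4$, which is negative definite.

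Next I would contract these four curves. By the Grauert--Artin contractibility criterion, a configuration of curves with negative definite intersection form on a smooth projective surface blows down to a normal projective surface $\bar Z_k$; since each contracted curve is a single smooth rational $(-2)$-curve, the singularity at each of the four contracted points is a rational double point of type $A_1$, that is an ordinary node, and by construction $Z$ is the minimal resolution of $\bar Z_k$. As $Z$ is an Enriques surface and contracting $(-2)$-curves does not affect the fact that the canonical class is numerically trivial, $\bar Z_k$ is a $4$-nodal Enriques surface with minimal resolution $Z$. Carrying this out for $k = 1,2,3$ (and for the further $G$-translates of the $\sM_k$) exhibits $Z$ as the minimal resolution of a $4$-nodal Enriques surface in several distinct ways, which is the claim.

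There is essentially no obstacle here, the statement being a formal consequence of Proposition \ref{ke}. The only points deserving a word of care are: recording that a contracted smooth $(-2)$-curve produces exactly a node of type $A_1$, so that $\bar Z_k$ is genuinely \emph{nodal}; and observing that $\sM_1, \sM_2, \sM_3$ lie in distinct $G$-orbits, so the resulting contractions are honestly different, which is what justifies the phrase ``in several ways''.
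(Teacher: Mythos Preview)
Your proposal is correct and is exactly the argument the paper intends: the corollary is stated without proof as an immediate consequence of Proposition \ref{ke}, and you have simply spelled out the translation from independent sets $\sM_1,\sM_2,\sM_3$ in $\Ga_Z$ to disjoint $(-2)$-curves on $Z$, together with the standard contraction to $A_1$-nodes. There is nothing to add.
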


\begin{rem}
 i) One may ask, in view of proposition  \ref{ke},  whether $Z$ cannot be the minimal resolution of a $5$-nodal Enriques surface:
a  set of  $5$ disjoint $(-2)$ curves on $Z$ would produce on the K3 double cover $S$ 
at least two more $(-2)$-curves exchanged by the involution $\ga$.

ii) We can also consider the $32$ vectors orbit of $(1,1,1,0)$ for the action of $G$. Then we can define
another graph, with $32$ vertices and $96$ edges; this is an unramified double covering of the graph $\Ga_Z$,
and is again associated to a triangulation of the $2$-dimensional real torus.

\end{rem}

\section{Remarks on Normal cubic surfaces and on strict selfduality}

In this section we are mainly concerned with the case of the complex ground field $\CC$, and
we make some topological considerations.

It is worthwhile to observe that, already for degree $d=3$, the maximal number $\mu(d)$ of singular points
can be smaller, for  surfaces with isolated singularities,
than the total sum of the Milnor
numbers of the singularities . In fact, if a cubic surface has $4$ singular points exactly,
it is projectively equivalent to the Cayley cubic of equation $\s_3(x) = 0$, where $\s_3$ is the third elementary
symmetric function,
and its singularities are just four nodes.

But there is the cubic surface $ x y z = w^3$ which possesses $3$ singular points of type $A_2$,
hence realizing a sum of the Milnor numbers equal to $ 6$.

\begin{prop}
The cubic surface $X : = \{  x y z = w^3\}$ is strictly self-dual.
\end{prop}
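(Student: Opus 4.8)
The plan is to compute the dual variety $X^\vee$ of the cubic surface $X=\{xyz=w^3\}\subset\PP^3$ explicitly via the Gauss map, and exhibit a symmetric linear change of coordinates carrying $X$ to $X^\vee$ — this being precisely the criterion for strict self-duality recalled in the excerpt (the defining matrix $A$ of the projectivity should be of the form ${}^tBB$, i.e.\ symmetric). The Gauss map sends a smooth point $[x:y:z:w]\in X$ to the point of $\PP^{3\vee}$ with coordinates the partial derivatives $(\partial_x F,\partial_y F,\partial_z F,\partial_w F)=(yz,\,xz,\,xy,\,-3w^2)$. So I set
\[
\xi_0=yz,\quad \xi_1=xz,\quad \xi_2=xy,\quad \xi_3=-3w^2,
\]
and need to eliminate $x,y,z,w$ to find the equation of $X^\vee$ in the $\xi_i$.

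The elimination is short: from the first three, $\xi_0\xi_1\xi_2=(xyz)^2$, and on $X$ we have $xyz=w^3$, so $\xi_0\xi_1\xi_2=w^6$. Meanwhile $\xi_3=-3w^2$ gives $w^6=-\xi_3^3/27$. Hence the dual surface is
\[
X^\vee=\Bigl\{\,27\,\xi_0\xi_1\xi_2+\xi_3^3=0\,\Bigr\}.
\]
I should double-check that $X^\vee$ is indeed this cubic and not a proper component of it (i.e.\ that the Gauss map is dominant onto it), which follows because $X$ is a normal cubic, not a cone or ruled in a degenerate way, so $\dim X^\vee=2$ and degree considerations — or just the visible irreducibility of $27\xi_0\xi_1\xi_2+\xi_3^3$ — pin it down. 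Now rescale: put $\eta_i=3\xi_i$ for $i=0,1,2$ and $\eta_3=\xi_3$, so $X^\vee=\{\eta_0\eta_1\eta_2+\eta_3^3=0\}$, and then absorb the sign by $\eta_3\mapsto-\eta_3$ (a diagonal, hence symmetric, substitution) to land on exactly $\{\eta_0\eta_1\eta_2=\eta_3^3\}$, which is $X$ again in the coordinates $(\eta_0,\eta_1,\eta_2,\eta_3)$.

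The composite of the Gauss map $X\dashrightarrow X^\vee$ with this linear isomorphism $X^\vee\to X$ is given by a diagonal — in particular symmetric — matrix, so by the criterion quoted in the excerpt $X$ is strictly self-dual: in the coordinates $(\eta_0,\eta_1,\eta_2,\eta_3)$ one has $X^\vee=X$ on the nose. I would present it exactly in this order: (1) write down the Gauss map and the four partials; (2) do the two-line monomial elimination to get $27\xi_0\xi_1\xi_2+\xi_3^3=0$; (3) note this is an irreducible cubic of the same shape and apply the diagonal rescaling to recover the original equation. The only genuinely delicate point — and the one I would be most careful to phrase correctly — is step (2)'s claim that the image of the Gauss map is the full hypersurface $27\xi_0\xi_1\xi_2+\xi_3^3=0$ rather than a subvariety; but since both are irreducible surfaces and the Gauss map of a normal (non-linear, non-cone) cubic surface is dominant onto $X^\vee$ which has dimension $2$, there is nothing to worry about. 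Everything else is a routine monomial computation, so I would not belabor it.
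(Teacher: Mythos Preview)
Your proof is correct and follows essentially the same computation as the paper's: Gauss map $(x,y,z,w)\mapsto(yz,xz,xy,-3w^2)$, then the monomial elimination $\xi_0\xi_1\xi_2=(xyz)^2=w^6=-\xi_3^3/27$. The only difference is packaging: rather than computing $X^\vee=\{27\xi_0\xi_1\xi_2+\xi_3^3=0\}$ and then rescaling diagonally, the paper first rewrites $X$ as $\{xyz=\lambda w^3\}$ with $\lambda$ free and solves $-27\lambda^2=1$ so that $X^\vee=X$ holds on the nose in the same coordinates---but the underlying argument is identical. (One small expository slip: it is not the composite with the Gauss map that is given by a diagonal matrix, since the Gauss map is quadratic; what you mean, and correctly use, is that the projectivity carrying $X$ to $X^\vee$ is diagonal, hence symmetric.)
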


\begin{proof}
We can change coordinates so that  $X : = \{  x y z = \la w^3\}$, where we can choose $\la \neq 0$ arbitrarily.

The dual map is given by  $$\psi (x,y,z,w) = (yz, xz, xy, - 3 \la w^2) = : (a,b,c,d).$$

We want $ \la d^3 = abc $, this requires that
$$ - 27 \la^4  w^6 =  (xyz)^2 \Leftrightarrow - 27 \la^4  w^6 = \la^2 w^6  \Leftarrow - 27 \la^2   = 1.$$
\end{proof}

In \cite{cil-ded} the authors study limits of dual surfaces of smooth  quartic surfaces: and they show that
the  above cubic surface, as well as the Kummer surfaces, appear as limits.

As we discuss in the following example, this normal cubic is also the one with the largest (finite) fundamental group of
the smooth part among the normal cubic surfaces which are not cones over smooth plane cubic curves
(this is easy to show since the degree of a normal Del Pezzo surface is at most $9$).

\begin{ex}(A cubic with maximal Milnor number)\label{3A_2}

This  is the quotient $\PP^2 / (\ZZ/3) $ for the action such that $$(u_0, u_1,u_2) \mapsto (u_0,\e  u_1, \e^2 u_2),$$
where $\e$ is a primitive third root of unity.

The quotient is embedded by $x_i : = u_i^3, \ i=0,1,2 $ and by $x_3 := u_0 u_1u_2$, so that 
$$\PP^2 / (\ZZ/3) \cong Y : = \{ x_0 x_1 x_2 = x_3^3\}.$$

In this case the triple covering is only ramified in the three singular points $x_3=x_i=x_j = 0$
( for $0 \leq i < j \leq 2$), hence we conclude for $Y^* : = Y \setminus Sing (Y)$, that (since $\PP^2$ minus three points is simply connected)
$$\pi_1 (Y^*) \cong \ZZ/3.$$

This cubic surface  has 3 singular points with Milnor number $2$ (locally isomorphic to the singularity
$ x_1 x_2 = x_3^3$), and $3 \cdot 2 = 6$ is bigger than the maximum number of singular points that a normal cubic can have,
which equals to $4$. The surface realizes the maximum for  the total sum of the Milnor
numbers of the singularities.

In fact, for each normal cubic surface $Y$, which is not the cone over a smooth cubic curve, its singularities are rational double points,
hence the sum $m$ of their Milnor numbers of the singular points equals the number of the exceptional $(-2)$-curves 
appearing in the resolution $\tilde{Y}$; since $\tilde{Y}$ is the blow up of the plane in 6 points, the rank of
$H^2(\tilde{Y}, \ZZ)$ is at most 7, hence (these $(-2)$-curves being numerically independent, see \cite{artin}) 
$ m \leq 6$.

\end{ex}

We give now a very short proof of  the following  known theorem (see \cite{adt} for other approaches)

\begin{theo}
If $Y$ is the Cayley cubic, and $Y^*$ its smooth part,  then 
 $$\pi_1 (Y^*) \cong \ZZ/2.$$
\end{theo}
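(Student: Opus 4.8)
The plan is to exhibit $Y^*$ as the base of a connected étale double covering whose total space is simply connected; then $\pi_1(Y^*)\iso\ZZ/2$ follows at once, since a connected étale double cover is automatically Galois with group $\ZZ/2$, and a simply connected cover is the universal one.

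First I would recall the classical blow‑up model of $\tilde Y$. Since $Y$ is a cubic surface with exactly four nodes, $\tilde Y$ is a weak del Pezzo surface of degree $3$, obtained by blowing up $\PP^2$ at the six vertices $p_{ij}=\ell_i\cap\ell_j$ ($1\le i<j\le 4$) of a complete quadrilateral formed by four general lines $\ell_1,\dots,\ell_4$; the four exceptional $(-2)$‑curves $E_1,\dots,E_4$ over the four nodes are the strict transforms of the four lines, so $E_i = H - e_{ij_1}-e_{ij_2}-e_{ij_3}$ in terms of the standard basis $H,e_{12},\dots,e_{34}$ of $\Pic\tilde Y\iso\ZZ^7$. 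Then $Y^*$ is biregular to $\tilde Y\setminus(E_1\cup\dots\cup E_4)$, and $\tilde Y$, being rational, is simply connected.

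The key observation is that $E_1+\cdots+E_4 = 4H - 2\sum_{i<j}e_{ij} = 2L$ with $L:=2H-\sum_{i<j}e_{ij}\in\Pic\tilde Y$, because each of the six points $p_{ij}$ lies on exactly two of the four lines. Hence there is a connected double cover $\pi\colon V\to\tilde Y$ branched exactly along the reduced divisor $E_1+\cdots+E_4$; as this branch divisor is a disjoint union of smooth rational curves, $V$ is a smooth projective surface. Using $K_{\tilde Y}=-3H+\sum e_{ij}$ one computes $K_V=\pi^*(K_{\tilde Y}+L)=\pi^*(-H)$, so $-K_V$ is nef and big and $V$ is a weak del Pezzo surface of degree $(-K_V)^2 = 2H^2 = 2$; in particular $V$ is rational, hence simply connected. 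Now set $V^*:=V\setminus\pi^{-1}(E_1\cup\dots\cup E_4)$, which maps to $Y^*=\tilde Y\setminus(E_1\cup\dots\cup E_4)$ by an étale double covering, connected because the branch divisor is reduced and nonzero. Writing $R_i:=\pi^{-1}(E_i)_{\mathrm{red}}$, from $\pi^*E_i=2R_i$ and $\deg\pi=2$ one gets $R_i^2=\tfrac12 E_i^2=-1$, so the $R_i$ are four pairwise disjoint $(-1)$‑curves. Contracting them yields a birational morphism $c\colon V\to\bar V$ onto a smooth projective (again simply connected) surface, restricting to an isomorphism $V^*\xrightarrow{\ \sim\ }\bar V\setminus\{q_1,\dots,q_4\}$; since removing finitely many points does not change the fundamental group of a smooth complex surface, $\pi_1(V^*)\iso\pi_1(\bar V)=1$. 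Therefore $V^*\to Y^*$ is a connected étale double cover with simply connected total space, and $\pi_1(Y^*)\iso\ZZ/2$.

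I expect the only genuinely non‑routine step to be the identification of the correct double cover: one must notice that the four exceptional $(-2)$‑curves form a divisor divisible by $2$ in $\Pic\tilde Y$, and that the resulting branched cover is again a weak del Pezzo surface in which those curves become $(-1)$‑curves (so that the neighbourhood boundaries are $3$‑spheres rather than lens spaces with $\ZZ/2$ in $\pi_1$). Everything else — the complete‑quadrilateral model of the Cayley cubic, the self‑intersection bookkeeping, and the stability of simple connectedness under blow‑down and under deleting points — is standard. As a consistency check one may also compute $H_1(Y^*)$ directly from Poincaré–Lefschetz duality as the cokernel of $H_2(\tilde Y)\to\ZZ^4$, $\alpha\mapsto(\alpha\cdot E_i)_i$, whose image is the even‑sum sublattice, giving again $H_1(Y^*)\iso\ZZ/2$.
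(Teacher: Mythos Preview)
Your proof is correct and shares the paper's core idea: exhibit a simply connected \'etale double cover of $Y^*$, forcing $\pi_1(Y^*)\cong\ZZ/2$. The difference lies in how the cover is produced. The paper writes down the standard Cremona involution $c$ on the del Pezzo sextic $Z=\mathrm{Bl}_3(\PP^2)$, observes that $c$ has exactly four fixed points, and checks by an explicit formula that $Z/c$ is a $4$-nodal cubic; then $Z\setminus\{\text{4 points}\}\to Y^*$ is the desired cover. You instead work on the resolution $\tilde Y=\mathrm{Bl}_6(\PP^2)$, notice the $2$-divisibility $E_1+\cdots+E_4=2L$ in $\Pic\tilde Y$, and build the branched cover $V\to\tilde Y$ abstractly; contracting the four resulting $(-1)$-curves $R_i$ recovers exactly the paper's $Z$ (a del Pezzo of degree $2+4=6$), and your $\bar V\to Y$ is the quotient by the same Cremona involution. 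So the two arguments produce the identical covering space from opposite ends: the paper's is shorter and more explicit, while yours is more systematic (and transplants readily to other configurations of $(-2)$-curves whose sum is $2$-divisible). Your Poincar\'e--Lefschetz consistency check is a nice addition not present in the paper.
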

\begin{proof}
The key point is that  $Y$ admits a double covering ramified only in the four singular points (this can also immediately be seen representing
$Y$ as the determinant of a symmetric matrix of linear forms), which is isomorphic to the blow up of $\PP^2$
in three points. 

Indeed, consider the birational action of $\ZZ/2$ on $\PP^2$ given by the Cremona involution 
$$c : (u_i) \mapsto (\frac{1}{u_i}) = ( \frac{u_0 u_1 u_2}{u_i}) .$$

The involution becomes biregular on the blow up $Z$ of $\PP^2$ at the three points of indeterminacy,
the three coordinate points; $Z$ is the so-called Del Pezzo surface of degree $3$,
and the fixed points of the involution are exactly four, the points $(\e_1, \e_2, 1)$, where $ \e_i \in \{ 1, -1\}$.

We have that the  quotient $ Z / c$  is exactly  a cubic $Y$ with four singular points, hence isomorphic to the Cayley cubic.

Indeed, the quotient map $\psi$  is given by the system of $c$-invariant cubics through the coordinate points $e_i$,
hence
$$ \psi (x) =  (x_1 x_2 x_3, x_1 (x_2^2 + x_3^2),   x_2 (x_3^2 + x_1^2), x_3 (x_1^2 + x_2^2)),$$
and the image $Y$ has degree $ 3 = \frac{1}{2} (3^2 - 3 )$.

$ Z \ra Y$ is only ramified at the four fixed  points, 
whose image points are the 4 singular points $(\e_1 \e_2, 2 \e_1 , 2 \e_2, 2)$, whereas $\psi$ is otherwise injective 
on $Z/c$: hence $Y$ is a cubic with 4 singular points.

Since $Z$ minus a finite number of points is simply connected,
follows that $\pi_1 (Y^*) \cong \ZZ/2.$

\end{proof}

\begin{prop}\label{nodalcubic}
If $Y$ is a normal cubic surface, then it cannot contain three  collinear singular points.

If it contains at least four  singular points, then these are linearly independent and $Y$ is projectively equivalent to the
4-nodal Cayley cubic. 

If $Y$ is a nodal cubic with $\nu \leq 3 $  nodes, then $Y^*$ is simply connected.
\end{prop}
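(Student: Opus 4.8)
The plan is to establish the three assertions separately, the first two by elementary projective geometry and the third via the topology of the minimal resolution $\rho\colon\tilde Y\to Y$. To prove that no three singular points are collinear I would argue by contradiction: if distinct singular points $p_1,p_2,p_3$ lay on a line $L$ with $L\not\subset Y$, then $L\cap Y$ would be an effective divisor of degree $3$ on $L$, yet the equation of $Y$ has no linear term at any $p_k$, so every line through $p_k$ — in particular $L$ — meets $Y$ there with multiplicity $\ge2$, forcing $\deg(L\cap Y)\ge6$, absurd. If instead $L\subset Y$, I would take a general plane $H$ through $L$: since $Y$ is normal, $\Sing(Y)$ is finite, hence $Y$ is not singular along $L$ and $H\cap Y=L+C'$ with $C'$ a conic not containing $L$, smooth for general $H$ (as a normal cubic surface with a singular point is not a cone, hence not ruled by a pencil of lines through $L$); then $\Sing(H\cap Y)=L\cap C'$ has at most two points, while each $p_k$ must lie in it, a contradiction.

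For a cubic with at least four singular points: by the previous part no three are collinear, and they cannot be coplanar either, for the plane through four of them would cut $Y$ (irreducible, being normal) in a plane cubic curve with at least four singular points; a reduced plane cubic has at most three, so this curve is non-reduced, of type $2L+L'$ or $3L$, with singular locus the whole line $L$ — putting three of the four points on a line. Hence four of the singular points span $\PP^3$, and I would move them to the coordinate vertices $e_1,\dots,e_4$. A double point at each $e_i$ forces the coefficients of $x_i^3$ and of the monomials $x_i^2x_j$ ($j\ne i$) in $F$ to vanish, so $F$ is a linear combination of the four squarefree monomials $x_jx_kx_l$; none of these coefficients vanishes, for otherwise $F$ has a linear factor and $Y$ is reducible. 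A diagonal rescaling of the coordinates normalises the four coefficients to $1$, i.e. $F=\sigma_3(x)$, so $Y$ is the Cayley cubic, which has exactly four nodes; in particular $\nu\le4$ always, with equality forcing $Y$ to be the Cayley cubic.

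For simple connectivity when $\nu\le3$: here $\tilde Y$ is a smooth rational (weak del Pezzo) surface, hence simply connected, and $Y^*\cong\tilde Y\setminus\bigsqcup_{i=1}^{\nu}E_i$, the $E_i$ being the pairwise disjoint $(-2)$-curves over the nodes. I would first compute $H_1$: the Gysin sequence of the pair $(\tilde Y,Y^*)$ gives $H_1(Y^*;\ZZ)\cong\operatorname{coker}\bigl(\Pic(\tilde Y)\to\ZZ^{\nu}\bigr)$ via $\gamma\mapsto(\gamma\cdot E_i)_i$, and since $\Pic(\tilde Y)$ is unimodular this cokernel is $\bar\Lambda/\Lambda$, the quotient of the saturation $\bar\Lambda$ of $\Lambda:=\langle E_1,\dots,E_\nu\rangle\cong A_1^{\oplus\nu}$ inside $K_{\tilde Y}^{\perp}\cong E_6$; as $E_6$ is an even lattice while $\tfrac12\sum_{i\in I}E_i$ would have norm $|I|/2\notin2\ZZ$ for every nonempty $I$ with $|I|\le3$, the sublattice $\Lambda$ is saturated when $\nu\le3$, so $H_1(Y^*;\ZZ)=0$. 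Then I would upgrade this to $\pi_1(Y^*)=1$: by van Kampen $\pi_1(Y^*)$ is normally generated by meridians $\mu_i$ of the $E_i$, each of order dividing $2$ (the link of a node being $\RR\PP^3$), with $\pi_1(Y^*)/\langle\langle\mu_1,\dots,\mu_\nu\rangle\rangle=\pi_1(\tilde Y)=1$; and each $\mu_i$ is already null-homotopic in $Y^*$ because for $\nu\le3$ one can choose a line $\ell_i\subset Y$ through $p_i$ and through no other node — not all lines of $Y$ through $p_i$ can be among the $\nu-1\le2$ joins $\overline{p_ip_j}$ — whose strict transform $\tilde\ell_i\cong\PP^1$ meets $\bigsqcup_j E_j$ only at $\tilde\ell_i\cap E_i$, so that the complementary disc in $\tilde\ell_i$ lies in $Y^*$ with boundary $\mu_i$.

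The hard part will be precisely this last step, passing from $H_1(Y^*)=0$ to genuine simple connectivity: it is where the hypothesis $\nu\le3$ really bites, since for $\nu=4$ one is forced onto the Cayley cubic, through each of whose nodes pass only the three joining lines $\overline{p_ip_j}$, and there $\pi_1$ of the smooth part is $\ZZ/2$ (consistently, $\bar\Lambda/\Lambda\cong\ZZ/2$). So the argument cannot be purely homological and must see the combinatorics of the lines through the nodes; the cleanest rigorous route seems to be the geometric one via the lines $\ell_i$ — equivalently, exhibiting $Y^*$ (for $\nu\ge1$) as a blow-up of $\PP^2$ with a line deleted, which is visibly simply connected — together with the case $\nu=0$ being the (rational, hence simply connected) smooth cubic surface.
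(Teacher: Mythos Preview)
Your arguments for the first two assertions are correct and close in spirit to the paper's, though organized differently: the paper works immediately in coordinates (taking the three collinear points as $e_0,e_1,e_0+e_1$ and reading off that $F\in(x_2,x_3)^2$), whereas you argue geometrically via intersection multiplicities and plane sections. One small remark: your parenthetical ``a normal cubic surface with a singular point is not a cone'' is false (the cone over a smooth plane cubic is normal), but you do not actually need $C'$ to be smooth --- the singular points of $L+C'$ lying \emph{on $L$} are contained in $L\cap C'$, which has at most two points regardless.

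For the simple connectivity of $Y^*$ when $\nu\le3$ your route is genuinely different from the paper's. The paper first observes that nodal cubics with $\nu$ nodes form an irreducible family, so $Y^*$ has a fixed topological type; it then takes one convenient model, $\tilde Y=\mathrm{Bl}_6\PP^2$ with the $(-2)$-curves being strict transforms of $\nu$ lines, and notes that $\pi_1(Y^*)$ is a quotient of $\pi_1(\PP^2\setminus\nu\text{ lines})$, hence abelian, hence equal to $H_1(Y^*)=0$. Your approach avoids the deformation step and works for every nodal cubic directly: you compute $H_1(Y^*)=0$ by the primitivity of $A_1^{\oplus\nu}$ in $E_6$ (a clean lattice argument the paper replaces by the phrase ``trivial binary code''), and then kill the meridians $\mu_i$ with discs on lines $\ell_i\subset Y$. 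This is correct but your justification for the existence of $\ell_i$ is a little quick: the point is not merely that $\nu-1\le2$, but that each join $\overline{p_ip_j}$ contributes \emph{exactly} multiplicity $2$ to the length-$6$ scheme of lines through $p_i$ --- and the computation shows this multiplicity equals $2$ precisely because $p_j$ is an $A_1$ point (the relevant $3\times3$ determinant is the nondegeneracy of the tangent cone at $p_j$). With that observation your count $6-2(\nu-1)\ge2$ goes through. What your approach buys is uniformity (no appeal to connectedness of the moduli); what the paper's buys is brevity, since ``quotient of an abelian group'' sidesteps any line-counting.
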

\begin{proof}
There cannot be three collinear  singular points  of $Y$, since otherwise we may take coordinates such that these are
the points $e_0, e_1, e_0 + e_1$ and the equation of the cubic is then of the form $ F = x_2 A(x) + x_3 B(x)$,
since the line $\{ x | x_2 =  x_3=0\}$  is contained in $Y$. Vanishing of the partial derivatives 
$ \frac{\partial F}{\partial x_2 }, \frac{\partial F}{\partial x_3 } $ in the three points
imply that $A,B$ also vanish in the three points, hence $A,B$ belong to the ideal $(x_2, x_3)$ and 
$Y$ is singular along the line $x_2, x_3=0$.

Similarly, if $e_0, e_1, e_2$ are singular points of $Y$, then the equation $F$ of $Y$
contains only monomials $x^m$ with $m_0, m_1, m_2 \leq 1$, hence we can write
$ F = c x_0 x_1 x_2  + x_3  B(x)$.  If also $ e_0 + e_1 +e_2$ is a  singular point of $Y$, 
then $c=0$ and $Y$ is reducible. Hence if there are four singular points, then we may assume them to be
$e_0, e_1, e_2, e_3$ and  then the equation $F$ of $Y$
contains only monomials $x^m$ with $m_0, m_1, m_2, m_3 \leq 1$, hence
$F = \sum_i \frac{a_i } {x_i} x_0 x_1 x_2  x_3$, and we may replace $x_i $ by $\la_i x_i$
obtaining that $a_i = 1$.

Therefore if $Y$ is nodal, the $\nu$ nodes of $Y$ are linearly independent, and it follows that the nodal cubics
with  $\nu$ nodes are parametrized by a Zariski open set in a linear space, in particular, for $\nu$
fixed, $Y^*$ has always the same topological type.

We finish observing that a cubic surface with $\nu \leq 3$ nodes  is the blow up of the plane
in six points, of which $\nu$ triples are collinear  (the case of six points on a smoth conic, with $\nu=1$,
reduces to the previous case through a standard Cremona transformation based at $3$ of the six points). In particular, the fundamental group $\pi_1 (Y^*)$
is the quotient of the fundamental group of the complement in $\PP^2$ of at most three general  lines.

But $\PP^2$ minus three general  lines is $(\CC^*)^2$, and $\pi_1 ((\CC^*)^2) = (\ZZ)^2$. Hence 
$\pi_1 (Y^*)$ is Abelian, hence equal to $H_1 (Y^*, \ZZ) $, which is a trivial binary code for $\nu \leq 4$
(see \cite{nodalsurfaces}).

\end{proof} 

 Francesco Russo \cite{russo}  informed us of other examples of strictly self-dual hypersurfaces, the simplest one
being:
\begin{ex} ( Francesco Russo' s generalization of the Perazzo cubic \cite{perazzo})

Consider in $\PP^{2n+1} $ the hypersurface of equation
$$ \{x_0 \dots x_n - y_0 \dots y_n = 0 \}.$$

The dual map is given by:

$$ \psi(x,y) = (u,v), \ u_i = \frac{1}{x_i} x_0 \dots x_n, v_i =-  \frac{1}{y_i}y_0 \dots y_n.$$

For $n$ odd we have $u_0 \dots u_n - v_0 \dots v_n = 0$, so $X$ equals its dual variety.

For $n$ even it suffices to take $$X = \{x_0 \dots x_n + \la   y_0 \dots y_n = 0 \}, \la^2 = -1.$$
\end{ex}

The other examples constructed by Russo in \cite{russo} use determinants, symmetric determinants or Pfaffians,
and the Perazzo trick.

\section{Appendix on monoids.}

We consider now a normal quartic surface $X = \{ F =0\} \subset \PP^3_K$, where $K$ is an algebraically closed field. 

If $X$ has a point of multiplicity $4$, then this is the only singular point, while 
if $X$ contains a triple point, we can write the equation 
$$ F (x_1, x_2,x_3, z) = z G (x) + B (x) ,$$
and, setting $G_i : = \frac{\partial{G}}{\partial{x_i}}, B_i : = \frac{\partial{B}}{\partial{x_i}}$, we have 
$$ Sing(X) = \{ G(x) = B(x) = G_i z + B_i = 0, \ i=1,2,3 \} \ .$$

If $(x,z) \in Sing(X)$ and $x \in  \{ G(x) = B(x) = 0\}$, then $ x \notin Sing (\{G=0\})$, else $ x \in Sing (\{G=0\}) \Rightarrow  x \in Sing (\{B=0\})$ and the whole line $(\la_0 z, \la_1 x) \subset Sing(X)$. Hence $\nabla (G) (x) \neq 0$ and there exists a unique
singular point of $X$ in the above line. Since the two curves  $ \{ G(x) = 0\}, \{ B(x) = 0\}$
have the same tangent at  $x$ the intersection multiplicity at $x$ is at least $2$, and we conclude:

\begin{prop}\label{monoid}
Let $X$ be  quartic surface $X = \{ F =0\} \subset \PP^3_K$, where $K$ is an algebraically closed field,
and suppose  that $ Sing (X)$ is a finite set. If $X$ has a triple point then $ | Sing(X)| \leq 7$.

More generally, if  $X$ is a normal monoid, that is, a degree $d$ normal surface $X = \{ F =0\} \subset \PP^3_K$, where $K$ is an algebraically closed field, possessing a  point of multiplicity $d-1$, then $ | Sing(X)| \leq 1 + \frac{d (d-1)}{2}$.

\end{prop}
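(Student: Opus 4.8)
The plan is to work in coordinates where the point of multiplicity $d-1$ is $p=(0:0:0:1)$, so that $F=zG(x)+B(x)$ with $x=(x_1,x_2,x_3)$, $G$ homogeneous of degree $d-1$ and $B$ of degree $d$; having multiplicity exactly $d-1$ at $p$ means $G\not\equiv 0$, and since $\Sing(X)$ is finite, $X$ is reduced and irreducible --- otherwise $\Sing(X)$ would contain a curve --- so $F$ is irreducible and $\gcd(G,B)=1$, i.e.\ the plane curves $\{G=0\}$ and $\{B=0\}$ in $\PP^2$ have no common component. As already computed just above the statement, $\Sing(X)$ is the union of $\{p\}$ (which is singular because $d-1\ge 2$) and the set of points $(x_0,t)$ with $G(x_0)=B(x_0)=0$ and $t\,\nabla G(x_0)+\nabla B(x_0)=0$; and, again by that discussion, over any $x_0$ carrying such a singular point one has $\nabla G(x_0)\ne 0$ (otherwise the whole line $\overline{p\,(x_0,0)}$ would lie in $\Sing(X)$, contradicting finiteness), so $t$ is uniquely determined by $x_0$ and $\nabla B(x_0)$ is a scalar multiple of $\nabla G(x_0)$.

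From this I would read off that projection away from $p$ gives an injection of $\Sing(X)\setminus\{p\}$ into the set of points $x_0\in\{G=0\}\cap\{B=0\}$ at which $\{G=0\}$ and $\{B=0\}$ are tangent (or at which $x_0$ is a singular point of $\{B=0\}$, the case $\nabla B(x_0)=0$); in either case the local intersection number satisfies $i_{x_0}(G,B)\ge 2$. Since the two curves have no common component, Bézout gives $\sum_{x_0} i_{x_0}(G,B)=(d-1)d$, hence there are at most $\tfrac12 d(d-1)$ points with $i_{x_0}(G,B)\ge 2$, and therefore $|\Sing(X)|\le 1+\tfrac12 d(d-1)$. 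Specializing to a quartic with a triple point, $d=4$, yields $|\Sing(X)|\le 7$.

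Essentially all of this is an unwinding of the computation preceding the statement together with one application of Bézout, so I do not expect a genuine obstacle; the points that need a little care are (a) deducing that $X$ is reduced and irreducible, hence $\gcd(G,B)=1$, solely from finiteness of $\Sing(X)$ (this is what licenses Bézout), and (b) checking that the tangency bookkeeping and the estimate $i_{x_0}(G,B)\ge 2$ are correct in the degenerate case $\nabla B(x_0)=0$, and that the whole argument --- in particular the description of $\Sing(X)$ and the computation of the multiplicity at $p$ --- is independent of the characteristic of $K$.
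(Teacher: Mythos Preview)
Your proposal is correct and follows essentially the same route as the paper: write $F=zG+B$, project the singular points other than $p$ to $\{G=0\}\cap\{B=0\}\subset\PP^2$, observe that each such image point is a tangency point (intersection multiplicity $\ge 2$) because $\nabla G(x_0)\ne 0$ and $\nabla B(x_0)$ is proportional to it, and then apply B\'ezout to bound the number of such points by $\tfrac12 d(d-1)$. You are in fact slightly more explicit than the paper on two points it leaves implicit: that finiteness of $\Sing(X)$ forces $F$ irreducible and hence $\gcd(G,B)=1$ (so B\'ezout applies), and the degenerate subcase $\nabla B(x_0)=0$.
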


\begin{proof}
The second assertion follows by observing that in proving the first we never used the degree $d$, except for
concluding that the total intersection number (with multiplicity) of $G,B$ equals $ d (d-1)$.

\end{proof}

Finally,  I would like to thank Thomas Dedieu, Igor Dolgachev,  Francesco Russo,  for  useful comments on the first version of the paper;  and  Shigeyuki Kondo and  especially  the referee for very stimulating and useful remarks and queries.

\bigskip

\end{document}